\documentclass{article}
\usepackage{amsfonts,amsmath,amsthm,amssymb,latexsym,mathtools}
\usepackage{fullpage}
\usepackage[square,numbers]{natbib}
\usepackage{todonotes}
 
\usepackage{xcolor}
\usepackage[mathscr]{euscript}
\def\ds{\displaystyle}
\usepackage[utf8]{inputenc}
\newcommand{\R}{\mathbb{R}}
\newcommand{\N}{\mathbb{N}}
\newcommand{\Ucal}[0]{\ensuremath{\mathcal{U}}}
\newcommand{\Tcal}[0]{\ensuremath{\mathcal{T}}}

\newcommand{\pr}{\mathbf{P}}
\newcommand{\ex}{\mathbf{E}}
\newcommand{\aut}{\mathrm{aut}}
\newcommand{\frag}{\mathrm{Frag}}

\newcommand{\qr}{\mathrm{qr}}
\newcommand{\exc}{\mathrm{ex}}
\newtheorem{theorem}{Theorem}[section]

\newtheorem{corollary}[theorem]{Corollary}

\newtheorem{lemma}[theorem]{Lemma}
\theoremstyle{definition}

\newtheorem*{nntheorem}{Theorem}
\newcommand{\Ln}{\lim\limits_{n\to \infty}}

\usepackage{float}
\usepackage{hyperref}
\usepackage{enumerate}
\usepackage{enumitem}
\usepackage{chngcntr}
\usepackage{cleveref}
\usepackage{pdfpages}
\usepackage{caption,subcaption,float}
\counterwithout{equation}{section}

\title{Limiting probabilities of first order properties of \\ random sparse graphs and hypergraphs}

\author{
	Alberto Larrauri
	\thanks{
		Universitat Polit\`ecnica de Catalunya, Department of Computer Science. 
		E-mail: {\tt llzalberto@hotmail.com}. 
		Supported by the European Research Council (ERC) under the European Union's Horizon 2020 research and innovation programme (grant agreement ERC-2014-CoG 648276 AUTAR).
	}
	\and
	Tobias Müller
\thanks{
	University of Groningen, Department of Mathematics. 
	E-mail: {\tt tobias.muller@rug.nl}. 
}
\and	Marc Noy
\thanks{
	Universitat Polit\`ecnica de Catalunya, Department of Mathematics. 
	E-mail: {\tt marc.noy@upc.edu}. 
	Supported by the Ministerio de Econom\'{i}a y Competitividad grant MTM2017-82166-P.
}
}

\begin{document}

\maketitle
\begin{abstract}
	Let $G_n$ be the binomial random graph $G(n,p=c/n)$ in the sparse regime, which as is well-known  undergoes a phase transition at $c=1$. 
	Lynch (Random Structures Algorithms, 1992) showed that for every first order sentence $\phi$, the limiting probability that $G_n$ satisfies $\phi$ as $n\to\infty$ exists, and moreover it is an analytic function of $c$. In this paper we consider the closure $\overline{L_c}$ in $[0,1]$ of the set $L_c$ of all limiting probabilities of first order sentences in $G_n$. We show that there exists a critical value $c_0 \approx0.93$ such that $\overline{L_c}= [0,1]$ when $c \ge c_0$, whereas $\overline{L_c}$  misses at least one subinterval when $c<c_0$. We extend these results to random $d$-uniform sparse hypergraphs, where the probability of a hyperedge is given by $p=c/n^{d-1}$. 
	
\end{abstract}

\section{Introduction}

We consider properties of random graphs expressible in the first order (FO) language of graphs, which is first order logic together with an adjacency relation  $E(x,y)$ assumed to be symmetric and antireflexive. Our model is the binomial random graph $G(n,p)$ with vertex set $\{1,\dots,n\}$ and in which every edge is present independently with probability $p$. We focus on the so-called sparse regime $p=c/n$ with $c>0$. It is well-known that 
$G(n,c/n)$ undergoes a phase transition at $c=1$, corresponding to the emergence of the giant component \cite{ER60}. 
The model studied in \cite{ER60} was the uniform model $G(n,M)$ on graphs with $n$ vertices and $M$ edges. However,  the  results we need in this work  can be translated  into the $G(n,p)$ model with the relation  $M=p\binom{n}{2}$. 

Our starting point is the following result by Lynch \cite{lynch}. The notation $G \models \phi$ means that the graph $G$ satisfies the sentence $\phi$. We recall that a sentence is a FO formula without free variables, thus expressing a graph property closed under isomorphism. 

\begin{nntheorem}
For each FO sentence $\phi$, the following limit exists:
$$
p_c(\phi) = 	\lim_{n \to \infty} \pr\{  G_n \models \phi \}.
$$
Moreover, $p_c(\phi)$ is a combination of sums, products, exponentials and a set of constants $\Lambda_c$, hence it is 
an analytic function of $c$. 
\end{nntheorem}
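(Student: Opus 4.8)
The plan is to combine Gaifman's locality theorem with a precise description of the local structure of $G_n=G(n,c/n)$. By Gaifman's theorem, an FO sentence $\phi$ of quantifier rank $q$ is equivalent to a Boolean combination of \emph{basic local sentences} $\exists x_1\cdots\exists x_s\bigl(\bigwedge_{i<j}\operatorname{dist}(x_i,x_j)>2r\wedge\bigwedge_i\psi(x_i)\bigr)$, with $r\le 7^q$, $s\le q$, and $\psi(x)$ an $r$-local formula. Only finitely many $r$-local formulas $\psi_1,\dots,\psi_N$ are involved, up to equivalence; let $m$ be the largest number of witnesses that occurs. For a graph $G$ write $M_i(G)\in\{0,1,\dots,\infty\}$ for the maximum size of a set of vertices pairwise at distance $>2r$ that each satisfy $\psi_i$. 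Then the truth value of $\phi$ in $G$ is a fixed Boolean function of $\bigl(M_1(G)\wedge m,\dots,M_N(G)\wedge m\bigr)\in\{0,\dots,m\}^N$, so it suffices to prove that this random vector converges in distribution in $G_n$, and to identify the limit.

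\textbf{Local structure.} Three facts, all obtained from first/second moment estimates together with breadth-first exploration, do the job. (i)~For every finite rooted tree $T$ of radius $\le r$, the number $X_T$ of vertices $v$ with $B_r(v)\cong T$ satisfies $X_T/n\to q_T$ in probability, where $q_T$ is the probability that the depth-$r$ truncation of a Poisson$(c)$ Galton--Watson tree is isomorphic to $T$; here $q_T$ has the shape $(\text{rational})\cdot c^{a}e^{-bc}$, $\sum_Tq_T=1$, and, since Poisson$(c)$ has full support, $q_T>0$ for all $T$. The mean is $q_Tn+o(n)$ because a bounded exploration sees Poisson$(c)$ offspring with only $O(1/n)$ collision probability, and the variance is $o(n^2)$ because vertices at distance $>2r$ explore disjoint, asymptotically independent balls while there are only $O(n)$ pairs in expectation at distance $\le 2r$. (ii)~For each $\ell$ the number of $\ell$-cycles converges to a Poisson$(c^\ell/(2\ell))$ variable, jointly independent over $\ell$; conditioned on these counts, the radius-$2r$ ball around each such cycle converges in distribution, jointly and independently over the cycles, to that cycle decorated with independent Poisson$(c)$ Galton--Watson trees truncated at depth $2r$. (iii)~For any fixed $L$ the expected number of connected subgraphs on $\le L$ vertices with first Betti number $\ge2$ is $O(1/n)$; hence a.a.s.\ $G_n$ has none, so a.a.s.\ any cycle inside a radius-$r$ ball has bounded length, two such ``short'' cycles always lie far apart, and the radius-$2r$ ball around a short cycle is a genuine tree-decorated cycle. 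Finally $\Delta(G_n)=O(\log n)$ a.a.s., so every ball $B_{2r}(v)$ has $o(n)$ vertices.

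\textbf{Assembling the limit.} Call $\psi_i$ \emph{tree-realizable} if some finite rooted tree satisfies it at its root. If it is, fix such a tree $T$; by (i) $X_T=\Theta(n)$ a.a.s., and a greedy choice --- each chosen witness forbids only the $o(n)$ vertices of its radius-$2r$ ball --- yields $m$ pairwise-far witnesses, so $M_i\wedge m=m$ a.a.s. If $\psi_i$ is not tree-realizable, every vertex satisfying it has a short cycle inside its radius-$r$ ball, hence lies near one of the a.a.s.\ finitely many isolated short cycles $C_1,\dots,C_K$; by isolation, witnesses near distinct $C_j$ are automatically at distance $>2r$, so a.a.s.\ $M_i=\sum_{j=1}^Kf_{\psi_i}\bigl(B_{2r}(C_j)\bigr)$, where $f_{\psi_i}(H)$ is the maximum number of pairwise-$(>2r)$-far vertices of a finite structure $H$ whose radius-$r$ ball satisfies $\psi_i$. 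By (ii) this converges in distribution --- jointly over all $i$, and jointly with the deterministic contributions of the tree-realizable formulas --- to $\sum_\ell\sum_{k\le K_\ell}f_{\psi_i}(\mathcal C_\ell^{(k)})$, with the $K_\ell$ independent Poisson$(c^\ell/(2\ell))$ random variables and $\mathcal C_\ell^{(k)}$ independent copies of the Galton--Watson-decorated $\ell$-cycle. Thus $(M_1\wedge m,\dots,M_N\wedge m)$ converges, so $p_c(\phi)=\lim_n\pr\{G_n\models\phi\}$ exists; reading it off gives a finite sum of products of Poisson point probabilities $e^{-c^\ell/(2\ell)}(c^\ell/(2\ell))^k/k!$ --- built from $c$ by sums, products and exponentials --- and probabilities $\pr[f_\psi(\mathcal C_\ell)=k]$, each a convergent sum of finite products of tree-probabilities $q_T$. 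Collecting the latter into the set $\Lambda_c$ gives the claimed form, and each such sum is analytic in $c$ since it converges locally uniformly --- its terms being, for $c$ in any compact set, dominated by the uniformly small tails of the size of a fixed-depth Galton--Watson tree --- with analytic summands; hence $p_c(\phi)$ is analytic.

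\textbf{The main obstacle.} The moment estimates in (i)--(iii) are routine; the real difficulty is the cyclic local types --- showing that an $r$-local formula not realizable by trees must be ``supported on'' a bounded cycle, that these bounded cycles neither interact with one another nor carry extra topology around them, and, crucially, that their tree decorations converge \emph{jointly} with all the other data and with enough uniformity in $c$ that the ensuing (possibly infinite) sums are genuinely analytic rather than merely continuous. Keeping the distance bookkeeping of the Gaifman normal form consistent with the exploration and Poissonisation arguments is where the care is needed.
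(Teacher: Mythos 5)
You should first know that the paper does not prove this statement at all: it is quoted verbatim as Lynch's theorem and attributed to \cite{lynch}, whose original argument runs through Ehrenfeucht--Fra\"iss\'e games and the analysis (recalled later in the paper) showing that FO truth a.a.s.\ depends only on the bounded-diameter unicyclic part of $G_n$. Your route --- Gaifman locality plus local weak convergence to the Poisson$(c)$ Galton--Watson tree, splitting local types into tree-realizable ones (witnessed $\Theta(n)$ times, hence saturated) and cycle-bound ones (controlled by the $O_P(1)$ short cycles with their decorated neighbourhoods) --- is the standard modern alternative, and its overall architecture is sound for every fixed $c>0$, including $c\ge 1$; the reduction of $\phi$ to the capped vector $(M_1\wedge m,\dots,M_N\wedge m)$ and the saturation/greedy argument are fine.

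Two steps, however, are genuinely incomplete as written. First, in (iii) the inference ``no connected subgraph on $\le L$ vertices with Betti number $\ge 2$, hence no long cycle inside a radius-$r$ ball and the $2r$-ball of a short cycle is a tree-decorated cycle'' is a non sequitur: a long cycle inside a small ball is a configuration of \emph{unbounded} size, and the first-moment sum over all sizes diverges for $c\ge 1$, so the fixed-$L$ bound by itself says nothing about it. The fix is easy but must be said: if some $B_r(v)$ has Betti number $\ge 2$, take a BFS spanning tree of the ball and two non-tree edges; their fundamental cycles have length $\le 2r+1$, so one extracts a bicyclic connected subgraph on $O(r)$ vertices, and only then does the fixed-$L$ estimate apply. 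With this observation every cycle inside an $r$-ball has length $\le 2r+1$ and your dichotomy goes through for all $c$. Second, the analyticity argument is flawed as stated: locally uniform convergence of real-analytic summands does \emph{not} imply that the limit is real-analytic (Weierstrass approximation gives counterexamples). You need to extend the summands $q_T(c)$, which are polynomials in $c$ times $e^{-bc}$, to entire functions of a complex parameter and dominate the series uniformly on complex discs; this works because the truncated Poisson Galton--Watson tree size has finite exponential moments of every order, but that is the argument one must actually make. The distance-bookkeeping issue you flag yourself (geodesics between two witnesses near the same short cycle may leave its $2r$-ball, so $f_{\psi}$ should be evaluated on a slightly larger neighbourhood, say radius $4r$) is real but cosmetic. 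With these repairs your proof is correct and is a genuinely different, arguably more transparent, route than Lynch's.
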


The previous result shows in a strong form that FO logic does not capture the phase transition (see also \cite{ShelahSpencer} for a discussion including monadic second order logic).  

Instead of considering limiting probabilities of single sentences, in this paper we consider the set of all limiting probabilities  $$L_c = \{p_c(\phi)  \colon \phi \hbox{ FO sentence} \},
$$
and its topological closure $\overline{L_c}$ in $[0,1]$.
Our main result is that  there is a transition in the structure of 
$\overline{L_c}$  at a particular value of $c$. 
We say that  $\overline{L_c}$ contains a \emph{gap} if there is at least one subinterval $[a,b] \subseteq [0,1]$ with $a<b$ such that  $\overline{L_c} \cap [a,b] = \emptyset$.

\begin{theorem}\label{th:main}
	Let  $\overline{L_c}$ be the closure of the of limiting probabilities of first order sentences in $G(n,c/n)$.  
Let $c_0 \approx  0.93$ be the unique positive solution of 
\begin{equation}\label{eq:c0}
e^{\frac{c}{2}+ \frac{c^2}{4}} \sqrt{1-c}= \frac{1}{2}.
\end{equation}
Then for every $c>0$ the set $\overline{L_c}$  is a finite union of closed intervals. Moreover, the following  holds:
\begin{enumerate}
	\item $\overline{L_c} = [0,1]$ for $c \ge c_0$.
	\item $\overline{L_c}$ has at least one gap for $0<c < c_0 $.
\end{enumerate}
\end{theorem}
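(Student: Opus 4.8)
The plan is to identify $\overline{L_c}$ with the set of all subset sums of an explicit absolutely convergent family of positive reals, and then read off its structure from the classical theory of such sets.

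\medskip\noindent\textbf{Step 1: reduction to a subset-sum problem.} By Lynch's theorem together with Gaifman's locality theorem, the limit $p_c(\phi)$ of a first order sentence depends only on the ``finitary local data'' of $G_n$, which in the sparse regime reduces to the multiset of small components. In $G(n,c/n)$ every component is whp a tree or unicyclic; for a fixed tree $T$ the number of components isomorphic to $T$ is $\Theta(n)$, so it whp exceeds any fixed bound and contributes only the trivial probabilities $0,1$; and for a fixed connected unicyclic graph $H$ the number $X_H$ of components isomorphic to $H$ converges to a Poisson variable with mean $\lambda_H=(ce^{-c})^{|V(H)|}/|\aut(H)|$, the various $X_H$ being asymptotically independent. (For $c>1$ one first deletes the unique giant component --- Lynch's analysis shows it is invisible to first order logic beyond $0,1$ --- and the remainder behaves like a subcritical $G(n',\mu/n')$ with $\mu<1$, $\mu e^{-\mu}=ce^{-c}$, so the same description holds with $\mu$ in place of $c$.) By Borel--Cantelli the product law $\bigotimes_H \mathrm{Pois}(\lambda_H)$ is supported on the finite-support configurations $\mathbf n=(n_H)$, on each of which it puts mass $\alpha(\mathbf n)=\prod_H e^{-\lambda_H}\lambda_H^{\,n_H}/n_H!>0$; thus it is purely atomic with atom set $\mathcal A=\{\alpha(\mathbf n)\}$, summing to $1$. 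Consequently every $p_c(\phi)$ is a subset sum of $\mathcal A$, while conversely each finite subset sum lies in $\overline{L_c}$ (approximate ``no further small component'' by ``no further small component on at most $m$ vertices'' and let $m\to\infty$). Since $\mathcal A$ is summable, finite subset sums are dense among all subset sums and the set $\Sigma(\mathcal A)$ of all subset sums is compact; hence $\overline{L_c}=\Sigma(\mathcal A)$.

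\medskip\noindent\textbf{Step 2: the largest atom and part (2).} The largest atom is $P_0:=\prod_H e^{-\lambda_H}=e^{-\Lambda}$, where $\Lambda=\sum_H\lambda_H$ is the limiting expected number of unicyclic components; evaluating the exponential generating function of connected unicyclic graphs at $ce^{-c}$ gives $\Lambda=-\tfrac12\log(1-c)-\tfrac c2-\tfrac{c^2}{4}$, so $P_0=\sqrt{1-c}\,e^{c/2+c^2/4}$ (with $\mu(c)$ in place of $c$ when $c>1$). A one-line derivative check shows $P_0$ is strictly decreasing from $1$ to $0$ on $(0,1)$, so \eqref{eq:c0} has the unique positive root $c_0$, with $P_0>\tfrac12$ for $c<c_0$ and $P_0\le\tfrac12$ for $c\ge c_0$. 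For $c<c_0$: a sentence $\phi$ holding in the limit at the empty configuration $\mathbf 0$ has $p_c(\phi)\ge\alpha(\mathbf 0)=P_0$, and one failing there has $p_c(\phi)\le1-P_0$; hence $L_c$, and so $\overline{L_c}$, misses the nonempty interval $(1-P_0,P_0)$. This proves part~(2).

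\medskip\noindent\textbf{Step 3: Kakeya structure and part (1).} List the atoms decreasingly as $a_1\ge a_2\ge\cdots$. By Kakeya's theorem on subset sums, $\Sigma(\mathcal A)=[0,1]$ iff $a_n\le\sum_{j>n}a_j$ for all $n$; if this fails only finitely often $\Sigma(\mathcal A)$ is a finite union of closed intervals, and if it first fails at $n$ there is a gap $\bigl(\sum_{j>n}a_j,\,a_n\bigr)$. I would first prove that the inequality can fail only finitely often, by showing the atoms are ``Kakeya-dense'' below a threshold: for each $k$ there are exponentially many unicyclic $H$ on $k$ vertices with $\lambda_H$ spanning $\bigl[(ce^{-c})^k/(2k),(ce^{-c})^k\bigr]$, so the products $\prod_H\lambda_H^{\,n_H}/n_H!$ leave no large multiplicative gap among their small values, forcing $a_n/\sum_{j>n}a_j\to0$. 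Collecting the finitely many violating atoms into a set $F$ yields $\Sigma(\mathcal A)=\bigcup_{S\subseteq F}\bigl[\sum_{a\in S}a,\ \sum_{a\in S}a+(1-\sum_{a\in F}a)\bigr]$, a finite union of closed intervals --- the first assertion of the theorem. For part~(1), let $c\ge c_0$. The inequality at $n=1$ is $P_0\le1-P_0$, which holds. The second atom is $a_2=\lambda_{\max}P_0$ with $\lambda_{\max}=\max_H\lambda_H\le(ce^{-c})^3\le e^{-3}$ (the minimum of $|V(H)|$ over unicyclic $H$ being $3$), so the inequality at $n=2$ is $P_0(1+2\lambda_{\max})\le1$, which holds since $\tfrac12(1+2e^{-3})<1$. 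For $n\ge3$ all atoms concerned lie below $\lambda_{\max}P_0\le e^{-3}/2$ and carry total mass at least $\tfrac12-e^{-3}$, so the density statement, applied with a threshold of that order, supplies the remaining inequalities. Hence the full Kakeya condition holds and $\overline{L_c}=[0,1]$.

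\medskip\noindent The crux is the density statement of Step~3 --- that $\{\lambda_H\}$ and its products leave no large multiplicative gap below a suitable threshold, which requires quantitative control of how many $\lambda_H$ fall in a given dyadic range and how their products interleave (and, for part~(1), that this threshold can be taken of order $e^{-3}$). Everything else is bookkeeping, apart from the standard but delicate input of Step~1: that, up to the probabilities $0$ and $1$, the tree components and (for $c>1$) the giant component of $G_n$ carry no first order information.
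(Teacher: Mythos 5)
Your subcritical analysis (Steps 1--2 restricted to $0<c<1$) matches the paper's route: there too $\overline{L_c}$ is identified with the set of subsums of the fragment probabilities $p_H(c)=e^{-f(c)}(ce^{-c})^{|H|}/\aut(H)$, and the gap $(1-e^{-f(c)},e^{-f(c)})$ for $c<c_0$ is obtained exactly as in your Step 2. But your treatment of the supercritical regime is not a gap in rigor, it is wrong. Deleting the giant component and declaring the rest ``subcritical with the conjugate parameter $\mu(c)$'' does not describe $L_c$ for $c\ge 1$: the giant is not FO-invisible, because first order sentences can count cycles of any fixed length, and for fixed $k$ the number of $k$-cycles in $G(n,c/n)$ (wherever they sit, and for $c>1$ they sit mostly outside the small unicyclic components) is asymptotically Poisson with mean $c^k/(2k)$, not $\mu^k/(2k)$. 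Concretely, at $c=3$ one has $\mu(3)\approx 0.18$, so your atom set has largest atom $e^{-f(\mu)}\approx 0.999$ and your claimed identity $\overline{L_c}=\Sigma(\mathcal A_\mu)$ would force every limiting probability into $[0,0.002]\cup[0.998,1]$ (approximately); yet the FO sentence ``there exists a triangle'' has limiting probability $1-e^{-27/6}\approx 0.989$. The same phenomenon breaks your own Step 3 check for part (1): once $\mu(c)<c_0$ (i.e.\ $c$ large), your largest atom exceeds $1/2$ and the Kakeya condition fails at $n=1$, so your framework predicts a gap, contradicting the statement you are proving. The paper handles $c\ge 1$ by a completely different and genuinely needed argument: the FO-expressible events ``at most $a$ cycles of length at most $k$'' have limiting probabilities $\pr(\mathrm{Po}(\mu_k)\le a)$ with $\mu_k=\sum_{i=3}^k c^i/(2i)\to\infty$, and the central limit theorem for Poisson variables of growing mean lets these probabilities approximate any $p\in(0,1)$.

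The second genuine gap is that the quantitative heart of Steps 3 (both the ``finite union of intervals'' claim for all $c<1$ and the ``no gap'' claim for $c_0\le c<1$) is deferred to an unproved ``density statement''; knowing that the atoms below $e^{-3}/2$ carry mass at least $1/2-e^{-3}$ does not give the per-index inequality $a_n\le\sum_{j>n}a_j$. The paper proves exactly this by explicit constructions: for each size $k$ it exhibits the unicycles $C_{x,k-x}$ (cycle plus pendant path) and $T_{0,x,k-3-x}$ (triangle with two paths), computes their automorphism groups, and deduces $\sum_{H\in\Ucal_k}1/\aut(H)\ge (2k-7)/2$; combined with $\aut(H)\ge 1$ (so $|H_i|\le k(i)-1$) this yields $p_i\le\sum_{j>i}p_j$ for all large $i$, and for $c_0\le c<1$ an arithmetic--geometric series bound plus separate checks of the two exceptional fragments (the empty graph, using $e^{-f(c)}\le 1/2$, and the triangle) gives the inequality for every $i$. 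So to complete your plan in the range $0<c<1$ you must supply this per-size counting argument (or an equivalent), and for $c\ge 1$ you must replace the conjugation step by a direct argument such as the paper's growing-mean Poisson/CLT construction.
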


\paragraph{Remark.}
This line of research  was considered in \cite{hmnt2018} for minor-closed classes of graphs under the uniform distribution. For instance, it was shown there that for the class of acyclic graphs (forests), the set $\overline{L_c}$ is the union of 4 disjoint intervals. It was also shown that for every minor-closed class of graphs whose forbidden minors are all 2-connected,  $\overline{L_c}$ is always a finite union of at least two intervals. 

We extend the previous result to random sparse hypergraphs. 
We consider the model $G^d(n,p)$ of random $d$-uniform hypergraphs, where every $d$-edge has probability $p$ of being in $G^d(n,p)$  independently. 
When $p=c/n^{d-1}$ the  expected number of edges $p\binom{n}{d}$ is linear in $n$, justifying the qualifier `sparse'. 
A phase transition also occurs in $G^d(n,c/n^{d-1})$ when $c=(d-2)!$, as shown in \cite{hypergraph-transition}.


\begin{theorem}\label{th:hypergraph}
Let $d\ge3$ be fixed and let  $\overline{L_c}$ be the closure of the of limiting probabilities of first order sentences in $G^d(n,c/n^{d-1})$.
Let $c_0$ be the unique positive solution of 
\begin{equation}\label{eq:r0}
 \exp\left({\frac{c}{2(d-2)!}}\right) \sqrt{1-\frac{c}{2(d-2)!}}= \frac{1}{2}.
\end{equation}
Then for every $c>0$ the set $\overline{L_c}$  is a finite union of intervals. Moreover, the following  holds:
	\begin{enumerate}
		\item $\overline{L_c} = [0,1]$ for $c \ge c_0$.
		\item $\overline{L_c}$ has at least one gap for $0<c < c_0 $.
	\end{enumerate}
\end{theorem}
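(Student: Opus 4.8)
The plan is to mirror the proof of Theorem~\ref{th:main} step by step, replacing each statement about $G(n,c/n)$ by its $d$-uniform counterpart.

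First I would prove a Lynch-type theorem for $G^d(n,c/n^{d-1})$: for every FO sentence $\phi$ the limit $p_c(\phi)=\lim_n\pr\{G^d(n,c/n^{d-1})\models\phi\}$ exists, is a finite combination of sums, products and exponentials of a set of constants $\Lambda_c^{(d)}$ (hence analytic in $c$), and $\overline{L_c}$ is a finite union of closed intervals. Lynch's argument transfers almost verbatim once one records the hypergraph versions of its two ingredients: $G^d(n,c/n^{d-1})$ converges locally to a Galton--Watson hypertree with Poisson offspring of mean $c':=c/(d-2)!$ (so the phase transition is at $c'=1$, i.e.\ $c=(d-2)!$), and for every fixed isomorphism type of bounded rooted neighbourhood, resp.\ of connected component, the number of occurrences is either concentrated around a value of order $n$ --- so that FO can only assert ``there are at least $k$'', with probability tending to $0$ or $1$ --- or else converges to an independent Poisson random variable with an explicit analytic parameter. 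Both facts follow by first- and second-moment computations, bookkeeping the extra $1/(d-2)!$ factors and using the \emph{excess} $(d-1)|E|-|V|+(\#\text{components})$ in place of ``number of independent cycles''.

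Next comes the description of $\overline{L_c}$ through these Poisson variables, which is where the dichotomy appears. An Ehrenfeucht--Fra\"iss\'e game together with Hanf/Gaifman locality shows that $p_c(\phi)$ depends only on the joint law of the truncated counts of the finitely many ``rare'' configuration types up to the quantifier rank of $\phi$, and these counts converge jointly to independent Poisson random variables; symmetry of $\overline{L_c}$ about $\tfrac12$ comes for free from $\phi\mapsto\lnot\phi$. Then, exactly as in the proof of Theorem~\ref{th:main}, one shows that $\overline{L_c}=[0,1]$ when a certain quantity $m(c)$ --- an explicit function of $c'$ built from the Poisson parameters --- satisfies $m(c)\le\tfrac12$, and that $\overline{L_c}$ misses an open interval around $\tfrac12$ when $m(c)>\tfrac12$. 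A computation of $m(c)$ --- the delicate point being to identify the correct family of rare configurations and their Poisson parameters --- shows that $m(c)=\tfrac12$ exactly at $c=c_0$, the unique positive solution of~\eqref{eq:r0}; hence $\overline{L_c}=[0,1]$ for $c\ge c_0$ and $\overline{L_c}$ has a gap for $0<c<c_0$.

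The step I expect to be the main obstacle is the \emph{supercritical} regime. For graphs $c_0\approx0.93<1$, so the whole gap analysis lives in a subcritical random graph --- a disjoint union of small components --- whereas here $c_0$ equals $c_0'\cdot(d-2)!$ with $c_0'\approx1.84$, hence $c_0>(d-2)!$ for every $d\ge3$; so for $c\in[(d-2)!,c_0)$ one must rule out a gap-filling contribution from the giant component. This requires the local weak (Benjamini--Schramm) limit of the giant, the fact that the counts of the relevant rare substructures lying \emph{inside} the giant still converge to independent Poisson variables, and that the rest of the giant's first-order theory is asymptotically deterministic --- so that, again, only the Poisson atoms yield non-trivial limiting probabilities. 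Setting this up for $d$-uniform hypergraphs, together with the combinatorial computation behind the exact constant in~\eqref{eq:r0} (note the $\tfrac{c}{2(d-2)!}$, not the $\tfrac{c}{(d-2)!}$ one might first expect), is where the real effort lies; the remaining arguments are routine adaptations of the graph case.
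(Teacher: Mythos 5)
Your outline goes wrong at the one point that actually drives the theorem: the location of $c_0$ relative to the phase transition. Equation \eqref{eq:r0} as printed contains a typo (the extra factor $2$ inside the square root); the quantity intended is the limiting probability that $G^d_n$ is acyclic, namely $e^{-f(c)}=\exp\left(\frac{c}{2(d-2)!}\right)\sqrt{1-\frac{c}{(d-2)!}}$ with $f(c)=\sum_{k\ge 2}\bigl(c/(d-2)!\bigr)^k/(2k)$, and $c_0$ is the point where this equals $\tfrac12$. As the remark following the theorem says, $c_0=r\,(d-2)!$ with $r\approx 0.898$, so $c_0<(d-2)!$: the entire gap regime is \emph{subcritical}, exactly as for graphs. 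Your reading gives $c_0\approx 1.8\,(d-2)!>(d-2)!$ and makes the ``main obstacle'' a gap analysis in a supercritical window $[(d-2)!,c_0)$ via the local weak limit of the giant component. That is doubly untenable: no such window exists, and if it did your program could not produce a gap there, because for every $c\ge(d-2)!$ the short-cycle counts are asymptotically independent Poissons with total mean $\sum_{k\le K}(c/(d-2)!)^k/(2k)\to\infty$, and the CLT argument (the hypergraph analogue of Section~\ref{sec:nogap_supercritical}) already yields $\overline{L_c}=[0,1]$ for all $c\ge(d-2)!$. The true gap mechanism below $c_0$ is elementary and purely subcritical: conditioned on the fragment (in particular on acyclicity) every FO sentence has limiting probability $0$ or $1$ (Lemma~\ref{lem:hyp_zeroone_fragment}), so every limit lies in $[0,1-e^{-f(c)}]\cup[e^{-f(c)},1]$, a gap as soon as $e^{-f(c)}>\tfrac12$.

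The second problem is that the steps you label ``routine adaptations'' are where the hypergraph-specific work actually lies, and your sketch replaces them with an unspecified quantity $m(c)$. The paper's route for $c<(d-2)!$ is: $\overline{L_c}$ equals the set of subsums of the fragment probabilities $p_H(c)=e^{-f(c)}\,\bigl(ce^{-c/(d-2)!}\bigr)^{|H|}/\aut(H)$, $H\in\Ucal$; then both ``finite union of intervals for every $c$'' and ``no gap for $c_0\le c<(d-2)!$'' are obtained from Kakeya's subsum criterion (Lemma~\ref{kakeya}), i.e.\ by verifying $p_i\le\sum_{j>i}p_j$. Checking this requires the automorphism bound $(d-2)!^{|H|}/\aut(H)\le\bigl(\tfrac{d-2}{d-1}\bigr)^2$ for unicyclic hypergraphs (Lemma~\ref{lem:hyp_aut_bound}) and explicit families of unicycles (cycles, $2$-cycles and triangles with pendant paths) whose per-size contributions dominate $p_i$; the presence of $2$-cycles is exactly what changes the constant relative to the graph case. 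None of this appears in your proposal, and a Lynch-type convergence theorem cannot substitute for it: existence and analyticity of each $p_c(\phi)$ says nothing about $\overline{L_c}$ being a finite union of intervals. Your general plan of mirroring Theorem~\ref{th:main}, with Poisson cycle counts of mean $(c/(d-2)!)^k/(2k)$ for $k\ge 2$ and the transition at $(d-2)!$, is the right start, but as written the argument establishes neither the gap below the (misplaced) $c_0$ nor the interval structure of $\overline{L_c}$.
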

\noindent
We remark that $c_0 = r (d-2)!$, where $r \approx  0.898$ is the positive solution of $\exp(r/2) \sqrt(1-r)=1/2$. 
As we will see the difference between Equations \eqref{eq:c0} and \eqref{eq:r0} comes from the fact that in graphs we consider cycles of length at least 3, whereas in hypergraphs we have to consider cycles of length at least 2. 

Here is a summary of the paper. In Section \ref{sec:prelim} we review several preliminaries we need on probability, random graphs and logic. In Section \ref{sec:main} we prove Theorem \ref{th:main}, and in Section \ref{sec:hypergrahs} we prove Theorem \ref{th:hypergraph}. To avoid repetition the preliminary results needed for hypergraphs in Section \ref{sec:hypergrahs} are only sketched. 


\section{Preliminaries}\label{sec:prelim}
We start with Brun's sieve for obtaining limiting Poisson distributions \cite[see][Theorem 1.23] {bollobas2001random}.

\begin{lemma}
	\label{th:factorialmethod}
	Let $X_{n,1},\dots, X_{n,k}$ be non-negative integer 
	valued random variables defined over the same probability space.
	Let $\lambda_1,\dots, \lambda_k\in \R$ be non-negative. 
	Suppose that given for $a_1,\dots, a_k\ge0$ it holds that
	\[
	\Ln
	\mathrm{E}\left[
	\prod_{i=1}^{k} \binom{X_{n,i}}{a_i}
	\right] = \prod_{i=1}^k \frac{\lambda_i^{a_i}}{a_i!}.
	\]
	Then the $X_{n,i}$ converge in distribution to 
	independent Poisson variables whose respective means are the $\lambda_i$.
	That is, for any $b_1,\dots, b_k\ge 0$
	\[
	\Ln \pr \left( 
	\bigwedge_{i=1}^k X_{n,i}=b_i
	\right)= \prod_{i=1}^k e^{-\lambda_i} \frac{\lambda_i^{b_i}}{b_i!}.
	\]
\end{lemma}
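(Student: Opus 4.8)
The plan is to run the classical Brun sieve: express the joint point probabilities $\pr(\bigwedge_i X_{n,i}=b_i)$ through the joint factorial moments by inclusion--exclusion and then pass to the limit. Two preliminary remarks frame the argument. First, the right-hand side of the hypothesis is exactly the joint factorial moment of a vector $(Y_1,\dots,Y_k)$ of \emph{independent} Poisson variables with means $\lambda_1,\dots,\lambda_k$: if $Y\sim\mathrm{Poisson}(\lambda)$ then $\ex\binom{Y}{a}=\lambda^a/a!$, and factorial moments of independent variables multiply. Second, the desired conclusion is precisely the statement that the laws of $(X_{n,1},\dots,X_{n,k})$ converge to the law of $(Y_1,\dots,Y_k)$, since for $\N^k$-valued random variables convergence in distribution is equivalent to convergence of all the point probabilities $\pr(\bigwedge_i X_{n,i}=b_i)$ (the limiting point masses summing to $1$). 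So what must be shown is the general fact: convergence of every joint factorial moment to that of $(Y_1,\dots,Y_k)$ forces the point probabilities to converge to $\prod_i e^{-\lambda_i}\lambda_i^{b_i}/b_i!$, and the product form of this limit automatically delivers the asserted independence.

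First I would record the one-dimensional inclusion--exclusion identity for a non-negative integer random variable $X$ (these are finite sums when, as in all our applications, $X$ is bounded):
\[
\pr(X=b)=\sum_{a\ge b}(-1)^{a-b}\binom{a}{b}\,\ex\binom{X}{a},
\]
together with the Bonferroni inequalities, namely that the partial sums of this alternating series alternately over- and under-estimate $\pr(X=b)$. Applying the identity once in each coordinate gives the multivariate form
\[
\pr\!\left(\bigwedge_{i=1}^k X_{n,i}=b_i\right)=\sum_{a_1\ge b_1}\cdots\sum_{a_k\ge b_k}\left(\prod_{i=1}^k(-1)^{a_i-b_i}\binom{a_i}{b_i}\right)\ex\!\left[\prod_{i=1}^k\binom{X_{n,i}}{a_i}\right],
\]
and iterating the one-dimensional Bonferroni inequalities coordinate by coordinate produces, for any truncation level $N$ (keeping only terms with $a_i\le b_i+N$ for all $i$), an explicit two-sided estimate for the left-hand side in terms of finitely many of the joint factorial moments, with the error controlled by moments of slightly higher order.

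Then I would take the two limits in order. Fix $N$. The Bonferroni bounds at level $N$ involve only joint factorial moments $\ex[\prod_i\binom{X_{n,i}}{a_i}]$ of bounded order, each of which tends as $n\to\infty$ to $\prod_i\lambda_i^{a_i}/a_i!$ by hypothesis; hence $\liminf_n$ and $\limsup_n$ of $\pr(\bigwedge_i X_{n,i}=b_i)$ are both trapped between the two Bonferroni partial sums formed with the limiting (Poisson) factorial moments. Now let $N\to\infty$: these Poisson partial sums converge, simply by absolute convergence of $\sum_a\lambda^a/a!$ in each coordinate, to $\prod_i e^{-\lambda_i}\lambda_i^{b_i}/b_i!$. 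This squeezes $\liminf_n=\limsup_n=\prod_i e^{-\lambda_i}\lambda_i^{b_i}/b_i!$, which is the claim.

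The one genuinely delicate point — the step I would treat most carefully — is the legitimacy of interchanging the two limits, i.e.\ that the Bonferroni sandwich is uniform enough in $n$ that one may fix $N$ before sending $n\to\infty$, and that the multivariate Bonferroni bounds are set up correctly; since the signs there come from a product and therefore do \emph{not} simply alternate with the total degree, the safe route is to derive them by peeling off one coordinate at a time using the one-dimensional inequalities. An alternative packaging that hides the sign bookkeeping is via generating functions: the joint factorial moment generating function $\ex[\prod_i(1+t_i)^{X_{n,i}}]$ has the joint factorial moments as its Taylor coefficients, coefficientwise convergence plus a domination bound valid on $[-1,0]^k$ upgrades to pointwise convergence there, and the substitution $t_i=z_i-1$ turns this into convergence of the joint probability generating functions on $[0,1]^k$ — hence convergence in distribution — to $\prod_i e^{\lambda_i(z_i-1)}$, the generating function of independent Poissons; the domination bound plays exactly the role of the uniform Bonferroni control.
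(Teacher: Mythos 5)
The paper does not actually prove this lemma — it is quoted as Brun's sieve with a citation to Bollob\'as (Theorem 1.23) — and your argument is correct and is essentially the standard proof from that reference: sandwich the point probabilities $\pr\bigl(\bigwedge_i X_{n,i}=b_i\bigr)$ between truncated inclusion--exclusion (Bonferroni) sums of joint factorial moments, let $n\to\infty$ at a fixed truncation level (no uniformity in $n$ is needed, since the Bonferroni inequalities hold for every $n$ separately), and then let the truncation level tend to infinity, where both bounds converge to $\prod_i e^{-\lambda_i}\lambda_i^{b_i}/b_i!$. You also correctly identify and handle the one delicate multivariate point — the lower bound cannot be obtained by naively multiplying coordinatewise lower bounds, since those partial sums may be negative, but peeling one coordinate at a time with the nonnegative upper partial sums as prefactors works — so your proposal is a complete and faithful reconstruction of the cited argument.
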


Next we present several results on the number of cycles in random sparse graphs. 
Let  the number $X_{n,k}$ of $k$-cycles in $G(n,c/n)$. It is easy to show that 
	\[ 	\mathrm{E}[X_{n,k}]\leq \frac{c^k}{2k},
	\]
	and  
	\[
	\Ln \mathrm{E}[X_{n,k}]= \frac{c^k}{2k}.
	\]	
The first part of the next lemma appears already in \cite{ER60} for the $G(n,M)$ model.
The second part is easily proved using the method of moments  \cite[Theorem 1.23]{bollobas2001random}.

\begin{lemma} \label{lem:idependentcycles}

For fixed $k\ge 3$, the number of $k$-cycles $X_{n,k}$ in $G(n,c/n)$ 
is distributed asymptotically as $n\to\infty$  as a Poisson law with parameter $\lambda_k = \frac{c^k}{2k}$. 
Moreover, for fixed $k$ 
the random variables $X_{n,3}, \dots, X_{n,k}$ are asymptotically independent. 
\end{lemma}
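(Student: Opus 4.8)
The plan is to deduce both parts at once from Brun's sieve, Lemma~\ref{th:factorialmethod}, applied to the family $X_{n,3},\dots,X_{n,k}$; the single-variable Poisson statement corresponds to taking all but one of the $a_i$ equal to zero. So it suffices to verify the joint factorial-moment hypothesis: for all integers $a_3,\dots,a_k\ge 0$,
\begin{equation}\label{eq:plan-fm}
\Ln\ \mathrm{E}\!\left[\prod_{i=3}^{k}\binom{X_{n,i}}{a_i}\right]=\prod_{i=3}^{k}\frac{\lambda_i^{a_i}}{a_i!},
\qquad \lambda_i=\frac{c^i}{2i}.
\end{equation}

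First I would expand the left-hand side by linearity. Since $\binom{X_{n,i}}{a_i}$ counts the unordered $a_i$-element sets of distinct $i$-cycles present in $G(n,c/n)$, the product $\prod_i\binom{X_{n,i}}{a_i}$ counts tuples $(\mathcal{C}_3,\dots,\mathcal{C}_k)$, where $\mathcal{C}_i$ is such a set, all of whose cycles are present; hence
\[
\mathrm{E}\!\left[\prod_{i=3}^{k}\binom{X_{n,i}}{a_i}\right]
=\sum_{(\mathcal{C}_3,\dots,\mathcal{C}_k)}\pr\Big[\text{every cycle of }\textstyle\bigcup_i\mathcal{C}_i\text{ is present in }G(n,c/n)\Big].
\]
I would split this sum according to whether the cycles are pairwise vertex-disjoint. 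For a given tuple, let $H$ be the union of all its cycles, with $v(H)$ vertices and $e(H)$ edges. The key structural observation is that every connected component of $H$ is itself a union of cycles, hence contains a cycle, hence has at least as many edges as vertices; so $e(H)\ge v(H)$, and equality forces every component to be a single cycle, i.e.\ forces the cycles of $\bigcup_i\mathcal{C}_i$ to be pairwise vertex-disjoint (and in particular $\lvert\bigcup_i\mathcal{C}_i\rvert=\sum_i a_i$).

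For the main term --- the vertex-disjoint tuples --- I would use that the number of $i$-cycles on a labelled $n$-set is $\binom{n}{i}(i-1)!/2=(1+o(1))\,\tfrac{n^i}{2i}$, so the number of vertex-disjoint tuples with $a_i$ cycles of length $i$ for each $i$ is $(1+o(1))\prod_i\tfrac1{a_i!}\bigl(\tfrac{n^i}{2i}\bigr)^{a_i}$ --- the constraints that cycles avoid one another's vertices cost only lower-order factors, since $k$ is fixed --- and each such tuple, being edge-disjoint, is present with probability $(c/n)^{\sum_i i a_i}$; multiplying out gives precisely $(1+o(1))\prod_i\tfrac{\lambda_i^{a_i}}{a_i!}$. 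For the error term --- tuples in which some two cycles share a vertex --- there are only $O(1)$ isomorphism types of configurations (the total number of vertices used is at most $k\sum_i a_i$), each realised by at most $n^{v(H)}$ labelled copies and present with probability $\Theta(n^{-e(H)})$, with $e(H)\ge v(H)+1$; summing the resulting $O(n^{-1})$ bounds over the $O(1)$ types shows the error term is $o(1)$. This proves \eqref{eq:plan-fm}, and Lemma~\ref{th:factorialmethod} then delivers both the asymptotic Poisson law and the asymptotic independence. I expect no real obstacle: the only slightly delicate points are the two bookkeeping facts --- that a connected graph with a cycle has $e\ge v$, with equality iff it is a single cycle, and that the overlap patterns of a bounded number of short cycles form a set of size $O(1)$ uniformly in $n$ --- and both are elementary.
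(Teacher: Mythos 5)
Your proposal is correct and follows essentially the same route the paper intends: the paper disposes of this lemma by citing \cite{ER60} for the single-variable Poisson law and the method of moments (Brun's sieve, i.e.\ Lemma~\ref{th:factorialmethod}) for joint independence, which is exactly the factorial-moment verification you carry out. The details you supply --- the split into vertex-disjoint tuples versus overlapping ones, the observation that a connected union of at least two distinct cycles has $e\ge v+1$, and the $O(n^{-1})$ bound on the overlap contribution --- are the standard bookkeeping behind that citation and are sound.
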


We set 
\begin{equation}\label{eq:fc}
f(c)=\frac{1}{2}\ln\frac{1}{1-c} -\frac{c}{2} - \frac{c^2}{4}.
\end{equation}
This is a function defined on $(0,1)$ that plays an important role in our results. The function is $e^{-f(c)}$ the limiting probability that $G(n,c/n)$ is acyclic; see Figure \ref{fig:plot} for a plot. 

\begin{figure}
\begin{center}
\includegraphics[scale=0.4]{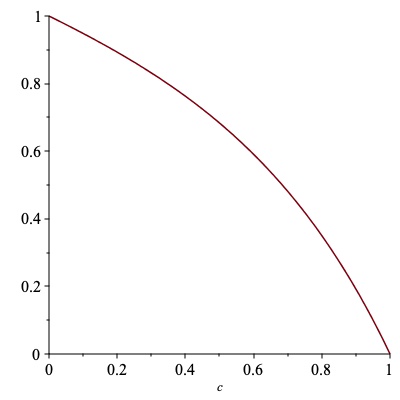}
\caption{The probability that $G(n,c/n)$ has no cycles as a function of $c$.}\label{fig:plot}
\end{center}
\end{figure}

\begin{corollary}\label{cor:acyclic}
	When  $c<1$  the expected number of cycles in $G(n,c/n)$ is $f(c)$.
	
	Moreover,  the limiting probability as $n\to\infty$ that $G(n,c/n)$ contains no cycle  is 
	$$e^{-f(c)}=e^{\frac{c}{2}+\frac{c^2}{4}}\sqrt{1-c}.
		$$
\end{corollary}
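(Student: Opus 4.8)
The plan is to combine the two lemmas just stated. First I would establish the formula for the expected number of cycles. By definition, the total number of cycles in $G(n,c/n)$ is $\sum_{k\ge 3} X_{n,k}$, so by linearity of expectation its expected value is $\sum_{k\ge 3}\ex[X_{n,k}]$. Using the bound $\ex[X_{n,k}]\le \frac{c^k}{2k}$ stated before Lemma \ref{lem:idependentcycles}, for $c<1$ this series is dominated by the convergent series $\sum_{k\ge 3}\frac{c^k}{2k}$, so by dominated convergence (or monotone convergence, since the terms are nonnegative) the limit of the expected number of cycles equals $\sum_{k\ge 3}\Ln\ex[X_{n,k}] = \sum_{k\ge 3}\frac{c^k}{2k}$. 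Then I would recognize this sum: since $\sum_{k\ge 1}\frac{c^k}{k} = \ln\frac{1}{1-c}$, we get $\sum_{k\ge 3}\frac{c^k}{2k} = \frac{1}{2}\ln\frac{1}{1-c} - \frac{c}{2} - \frac{c^2}{4} = f(c)$, matching \eqref{eq:fc}. (Strictly, the statement "the expected number of cycles is $f(c)$" should be read as the limiting expected number; I would phrase it accordingly.)

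For the second part, I would use the asymptotic independence and Poisson convergence from Lemma \ref{lem:idependentcycles}. Fix $K\ge 3$. The event that $G(n,c/n)$ has no cycle of length at most $K$ is $\bigwedge_{k=3}^{K} \{X_{n,k}=0\}$, and by Lemma \ref{lem:idependentcycles} its probability tends to $\prod_{k=3}^{K} e^{-\lambda_k}$ with $\lambda_k = \frac{c^k}{2k}$. Now I would let $K\to\infty$: the product $\prod_{k=3}^{\infty} e^{-\lambda_k} = \exp\!\left(-\sum_{k\ge 3}\frac{c^k}{2k}\right) = e^{-f(c)}$ converges (again because $c<1$), and a short squeezing argument handles the interchange of the two limits — the probability of no cycle at all is at most the probability of no cycle of length $\le K$ for every $K$, giving $\limsup_n \pr(\text{acyclic}) \le e^{-f(c)}$, while the expected number of cycles of length $> K$ is at most $\sum_{k>K}\frac{c^k}{2k}\to 0$, so by Markov's inequality the contribution of long cycles is negligible uniformly in $n$, giving the matching lower bound $\liminf_n \pr(\text{acyclic}) \ge e^{-f(c)}$. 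Finally, $e^{-f(c)} = e^{\frac{c}{2}+\frac{c^2}{4}}\sqrt{1-c}$ is just exponentiating \eqref{eq:fc}.

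The only genuinely delicate point is the double limit in the second part: justifying that $\lim_{n\to\infty}\pr(G(n,c/n)\text{ acyclic}) = \lim_{K\to\infty}\lim_{n\to\infty}\pr(\text{no cycle of length}\le K)$. This is exactly where the uniform (in $n$) tail bound $\ex\big[\sum_{k>K} X_{n,k}\big]\le \sum_{k>K}\frac{c^k}{2k}$ is needed, and it is the place a careless argument would break down; everything else is bookkeeping with the geometric-type series and the identity for $\ln\frac{1}{1-c}$.
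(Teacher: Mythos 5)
Your proposal is correct and follows essentially the same route as the paper: the paper's proof likewise interchanges $\lim_n$ with $\sum_{k\ge 3}$ using the uniform bound $\ex[X_{n,k}]\le \frac{c^k}{2k}$ to get $f(c)$, and deduces the acyclicity probability from the Poisson convergence and asymptotic independence in Lemma \ref{lem:idependentcycles}. You merely spell out the double-limit (truncation at $K$ plus Markov on the tail) that the paper leaves implicit, which is a fair and correct filling-in of detail.
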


\begin{proof}
We have
$$
\lim_{n\to\infty} \ex\left[\sum_{k\ge3}X_{n,k}\right]= 
\sum_{k\ge3} \frac{c^k}{2k} = f(c).
$$
The second statement follows from Lemma \ref{lem:idependentcycles}.
\end{proof}

A graph is unicyclic, or a unicycle,  if it is connected and has a unique cycle. A unicyclic graph has a simple structure: it consists of a cycle of length at least 3 and a collection of rooted trees attached to its vertices.  The \emph{size} of a unicyclic graph is the number of edges, which is equal to the number of vertices (we use this convention because for hypergraphs it is more convenient to define size as the number of edges). 
We denote by $\Ucal$ the family of unlabeled graphs whose connected components are all unicyclic, and we let $\Ucal_n= \{ H\in \Ucal \, : \, |H|=n\}$ and $\Ucal_{\leq n}=\bigcup_{i=1}^n \Ucal_i$.

The following is  well-known; see  \cite[Lemma 2.10]{frieze2016introduction} for a proof in the $G(n,p)$ model. 
We say that a property holds asymptotically almost surely (a.a.s.) if it holds with probability tending to 1 as $n\to\infty$. 
\begin{lemma}\label{th:nogiantcomponent}
	Let $p(n)\sim c/n$ with $0 < c <1$. Then a.a.s all the  connected component of
	$G(n,p)$ are either trees or  unicycles. 		
\end{lemma}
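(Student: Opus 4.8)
The plan is to show that a.a.s.\ $G(n,p)$ with $p \sim c/n$, $c < 1$, contains no subgraph with more edges than vertices, since a connected graph is a tree or a unicycle precisely when its number of edges is at most its number of vertices. Equivalently, I will show that a.a.s.\ $G(n,p)$ has no connected subgraph $H$ with $|E(H)| \ge |V(H)| + 1$. It suffices to show there is no subgraph at all with two independent cycles through some common structure; a clean way to phrase this is: a.a.s.\ $G(n,p)$ has no subgraph on $k$ vertices with $k+1$ edges, for any $k$.

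The key step is a first-moment computation. Fix $k \ge 4$ (the smallest $k$ for which a graph on $k$ vertices can have $k+1$ edges). The expected number of subgraphs on a given set of $k$ vertices with at least $k+1$ edges among them is at most $\binom{\binom{k}{2}}{k+1} p^{k+1}$, and summing over all $\binom{n}{k}$ vertex sets gives an upper bound of order
\[
\sum_{k \ge 4} \binom{n}{k}\binom{\binom{k}{2}}{k+1} p^{k+1}
\le \sum_{k\ge 4} n^k \cdot C^{k} \cdot \left(\frac{c+o(1)}{n}\right)^{k+1}
= \frac{1}{n}\sum_{k \ge 4} (C'(c))^{k},
\]
where $C, C'$ depend only on $c$ (and for fixed small $k$ the binomial coefficients are absolute constants). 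For $c<1$ one checks $C'(c) < 1$ after absorbing the polynomial-in-$k$ factors (which is where $c<1$ is genuinely used to make the geometric series converge with ratio bounded by a constant below $1$), so the whole sum is $O(1/n) \to 0$. By Markov's inequality, a.a.s.\ there is no such subgraph. Hence a.a.s.\ every connected component $C$ of $G(n,p)$ satisfies $|E(C)| \le |V(C)|$, i.e.\ $C$ is a tree or a unicycle.

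The main obstacle is the bookkeeping in the first-moment bound: one must be careful that the sum over $k$ ranging up to $n$ actually converges, because for $k$ comparable to $n$ the crude bound $\binom{\binom{k}{2}}{k+1} \le (ek/2)^{k+1}$ introduces a factor $k^{k}$ that could compete with $n^k$. The fix is the standard one — split the range into small $k$ (constant, where $\binom{\binom{k}{2}}{k+1}$ is an absolute constant) and large $k$, and for large $k$ use that $\binom{n}{k} \le (en/k)^k$ so the $k^k$ cancels against $n^k / k^k$, leaving a bound of the form $(\text{const}\cdot c)^{k}/n$; as long as $c$ is bounded away from the threshold this geometric tail is summable. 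Alternatively, one may cite the precise form of this estimate as in \cite[Lemma 2.10]{frieze2016introduction}, which carries out exactly this argument. I would present the short first-moment version and refer to that lemma for the detailed constants.
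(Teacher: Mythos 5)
There is a genuine gap in the quantitative part of your first-moment bound, and it hits exactly the regime the paper needs ($c$ close to $1$, e.g.\ $c_0\approx 0.93$). Your claim that after the split into small and large $k$ one is left with a ratio $C'(c)<1$ for all $c<1$ is not correct. Carrying out your own estimate: with $\binom{n}{k}\le (en/k)^k$ and $\binom{\binom{k}{2}}{k+1}\le \bigl(ek^2/(2(k+1))\bigr)^{k+1}\approx (ek/2)^{k+1}$, the $k^k$ indeed cancels, but an exponential factor survives and the $k$-th term is of order $\bigl(\tfrac{e^2}{2}c\bigr)^{k}\cdot\tfrac{k}{n}$, so the geometric ratio is about $3.7\,c$, and the sum converges only for $c<2/e^2\approx 0.27$. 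The problem is not just the crudeness of $\binom{\binom{k}{2}}{k+1}$: even if you replace it by the exact number of connected graphs with $k$ vertices and $k+1$ edges (which is of order $k^{k+1/2}$), the term becomes of order $(ec)^k/n$ and the sum still diverges for $c>1/e\approx 0.37$. So ``$c<1$ is genuinely used to make the geometric series converge'' is false as stated; the argument as written does not prove the lemma for $1/e<c<1$.

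The standard fix (and what the cited Lemma 2.10 of Frieze--Kar\'onski actually does) is to count only \emph{minimal witnesses} rather than all excess-one subgraphs: a component that is neither a tree nor a unicycle contains a subgraph that is a path $v_1,\dots,v_k$ together with two extra edges from $v_1$ and $v_k$ back into the path (equivalently, a theta, dumbbell or figure-eight graph, possibly with a connecting path). The number of such witnesses on a fixed $k$-set is at most $k!\cdot k^2$ (order a permutation of the vertices along the path and choose the two attachment points), so the expected number is at most $\sum_k \binom{n}{k}\,k!\,k^2\,p^{k+1}\le \sum_k n^k k^2 (c/n)^{k+1}=\frac{c}{n}\sum_k k^2 c^{k}=O(1/n)$, which tends to $0$ for every fixed $c<1$. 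The point is that $\binom{n}{k}k!\le n^k$ has no spurious $e^k$, whereas your count (and even the exact bicyclic count $\sim k^{k+1/2}$) is larger by a factor $e^k$, which is precisely what kills the bound for $c$ near $1$. With this replacement your overall strategy (first moment plus Markov, then the observation that excess $\le 0$ in every component means tree or unicycle) is exactly the standard proof that the paper refers to; without it, the proof does not cover the stated range $0<c<1$.
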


Given a graph $G$, we define its \emph{fragment} $\frag(G)$ as the union of all the  unicyclic components in $G$. We
will write $\frag_n$ to denote the fragment of $G(n,p)$. The following result states that below the critical value $c=1$ the expected size of $\frag_n$ is asymptotically bounded. 

\begin{lemma} \label{thm:sizefragment}
	Let $p(n)\sim c/n$ with $0 < c <1$. 
	Then $\Ln \mathrm{E}[|\frag_n|]$ exists and
	is a finite quantity.	
\end{lemma}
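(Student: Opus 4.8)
The plan is to express $\mathrm{E}[|\frag_n|]$ as a sum over all possible unicyclic components, and to show this sum converges to a finite limit dominated by a convergent series. Concretely, for a fixed connected unicyclic graph $U$ on $k$ vertices (hence $k$ edges, with $k \ge 3$), let me write $N_n(U)$ for the number of (unlabeled) copies of $U$ occurring as components of $G(n,c/n)$. Then $|\frag_n| = \sum_U k\,N_n(U)$ where the sum ranges over isomorphism classes of unicyclic connected graphs, so by linearity $\mathrm{E}[|\frag_n|] = \sum_{k \ge 3} k \sum_{|U|=k} \mathrm{E}[N_n(U)]$. The first step is the standard first-moment estimate: the expected number of copies of $U$ as a \emph{subgraph} is at most $\binom{n}{k}\frac{k!}{|\aut(U)|}\left(\frac{c}{n}\right)^k \le \frac{c^k}{|\aut(U)|}$, and requiring $U$ to appear as an \emph{induced} component only decreases this (the extra factor accounting for the absence of other incident edges and of chords is at most $1$). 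Summing over all unicyclic $U$ on $k$ vertices, $\sum_{|U|=k}\frac{k}{|\aut(U)|}$ counts, with the usual $1/|\aut|$ weighting, the number of ways to build a unicyclic graph on a distinguished vertex set: this is a purely combinatorial quantity $u_k$, and the generating function $\sum_k u_k x^k$ is known to have radius of convergence $1$ (a unicyclic component is a cycle of rooted trees, and the rooted-tree generating function $T(x)=\sum t_k x^k$ with $t_k = k^{k-1}/k!$ has radius of convergence $1/e$, so that $\sum_k \frac{1}{2k}\sum_{j\mid \text{cycle length}}\dots$ — more simply, $\frac{1}{2}\ln\frac{1}{1-T(x)} - \frac{T(x)}{2}-\frac{T(x)^2}{4}$ — converges for $T(x)<1$, i.e.\ for $|x|<1$).

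Putting these together: $\mathrm{E}[|\frag_n|] \le \sum_{k\ge 3} c^k u_k < \infty$ precisely because $c<1$ lies inside the disk of convergence, which gives a uniform (in $n$) summable bound. Next, to show the limit \emph{exists} (and is finite), I would argue termwise: for each fixed $U$, Lemma \ref{lem:idependentcycles} and the Poisson paradigm (or a direct second-moment / Brun's sieve computation as in Lemma \ref{th:factorialmethod}) give $\lim_{n\to\infty}\mathrm{E}[N_n(U)] = \frac{c^k}{|\aut(U)|}e^{-g_n}$-type expression — more precisely the number of components isomorphic to $U$ converges to a Poisson variable with a computable mean $\mu_U$, and $\mathrm{E}[N_n(U)] \to \mu_U$. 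Since the partial sums are dominated by the convergent series $\sum_k c^k u_k$ uniformly in $n$, dominated convergence (for series) lets me interchange the limit and the sum: $\Ln \mathrm{E}[|\frag_n|] = \sum_U k\,\mu_U < \infty$.

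The main obstacle is the uniform domination needed to justify the interchange of limit and infinite sum — one must control $\mathrm{E}[N_n(U)]$ not just asymptotically but by a bound valid for \emph{all} $n$ and summable over all unicyclic $U$. The clean way is the crude inequality $\mathrm{E}[N_n(U)] \le \binom{n}{k}\frac{k!}{|\aut(U)|}(c/n)^k \le \frac{c^k}{|\aut(U)|}$ noted above (exactly in the spirit of the stated bound $\mathrm{E}[X_{n,k}]\le c^k/(2k)$ for cycles, which is the $U=C_k$ case summed over $k$-vertex cycles), combined with the fact that $\sum_k u_k x^k$ — equivalently, by Corollary \ref{cor:acyclic}'s generating-function identity, $f(x) = \frac12\ln\frac{1}{1-x}-\frac x2-\frac{x^2}{4}$ evaluated along $T$ — has radius of convergence $1$. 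Everything else (termwise convergence of $\mathrm{E}[N_n(U)]$) is routine given Lemmas \ref{th:factorialmethod} and \ref{lem:idependentcycles}, and the finiteness of the limit then drops out of the domination. A secondary point to handle carefully is that $|\frag_n|$ counts vertices in unicyclic components only, so one should invoke Lemma \ref{th:nogiantcomponent} to note that a.a.s.\ there are no other "complex" components, but since we are bounding an expectation from above by the sum over genuinely unicyclic $U$, and the probability of a more complex component is $o(1)$ contributing $o(1)$ to the expected fragment size anyway, this does not affect the limit.
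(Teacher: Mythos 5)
There is a genuine gap, and it sits exactly at the point you flag as the main obstacle: the dominating series you propose is not summable on the whole range $0<c<1$. Your uniform bound is $\mathrm{E}[N_n(U)]\le c^{k}/\aut(U)$, so the domination you need is convergence of $\sum_{k\ge 3}k\,c^{k}\sum_{|U|=k}1/\aut(U)=\sum_{k\ge 3}k\,c^{k}\,C(k,k)/k!$, where $C(k,k)$ is the number of labeled connected unicyclic graphs on $k$ vertices. But $C(k,k)\sim\sqrt{\pi/8}\,k^{k-1/2}$, so $C(k,k)/k!=\Theta(e^{k}/k)$ and the series diverges for all $c\ge 1/e$. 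The claim that $\sum_k u_kx^k$ has radius of convergence $1$ is false: the tree function $T(x)$ has its singularity at $x=1/e$, where $T(1/e)=1$, so the condition $T(x)<1$ means $|x|<1/e$, not $|x|<1$; the unicyclic generating function $\frac12\ln\frac{1}{1-T(x)}-\frac{T(x)}{2}-\frac{T(x)^2}{4}$ therefore also has radius of convergence $1/e$. (The cycle case $\mathrm{E}[X_{n,k}]\le c^k/(2k)$ is misleading here: there are only $(k-1)!/2$ labeled $k$-cycles on a $k$-set, versus exponentially more unicyclic graphs, which is why the crude subgraph bound suffices for cycles but not for all unicycles.) As it stands, your argument proves the lemma only for $0<c<1/e$.

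The missing idea is precisely the factor you explicitly discard as ``at most $1$'': the requirement that $U$ occur as a \emph{component}. Keeping the factor $(1-p)^{k(n-k)+\binom{k}{2}-k}\approx e^{-ck}$ for the absence of edges leaving the $k$-set turns the bound on the expected number $Y_{n,k}$ of unicyclic components with $k$ edges into (for all $n$, and $k$ large) roughly $(ce^{1-c})^{k}e^{c/2}$, and since $ce^{1-c}<1$ for every $c\in(0,1)$ this gives a dominating bound summable against the weight $k$ on the whole subcritical range. This is exactly how the paper (following Erd\H{o}s--R\'enyi and Frieze--Karo\'nski) argues: uniform bound on $\mathrm{E}[Y_{n,k}]$ with the component factor retained, termwise limits $\Ln\mathrm{E}[Y_{n,k}]=C(k,k)(ce^{-c})^{k}/k!$ (up to the normalization), and then dominated convergence. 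The rest of your plan --- termwise convergence of the expectations and the interchange of limit and sum by domination, plus noting that complex components contribute $o(1)$ --- is sound once the dominating bound is repaired in this way.
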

\begin{proof}
This is done in \cite[Theorem 5d]{ER60} for the uniform model and in greater detail in  	\cite[Lemma 2.11]{frieze2016introduction} for the binomial model. For future reference we sketch the main ingredients in the proof. 
	
Let	$Y_{n,i}$ be the random variable equal to 
the number of unicyclic components 
in $G_n$ that contain exactly $i$ edges.
Then  one proves that for  $k$ large enough and  $n\ge0$
	\[
	\mathrm{E}[Y_{n,k}]\leq (c e^{1-c})^k e^{c/2}.
	\]
Furthermore, for all  $k\geq 3$
	\[
	\Ln \mathrm[Y_{n,k}]= C(k,k) (c e^{-c})^k,
	\]
where $C(k,k)$ denotes the number of labeled unicyclic graphs on $k$ vertices. Then the statement follows from  the Dominated Convergence Theorem.
\end{proof}

Let $\phi$ be a first order sentence. We recall that the quantifier rank $\qr(\phi)$ is the maximum number of nested quantifiers in $\phi$. 
It is shown in \cite{lynch} that whether $G_n$ satisfies $\phi$ or not  depends only a.a.s. on the  induced unicycles in $G_n$ of diameter at most $3^k$, where $k=\qr(\phi)$ (see Theorems 4.7, 4.8 and 4.9 in \cite{lynch}). 
This, together with the fact that for $c<1$ a.a.s. the connected components of $G_n$ are  either trees or unicycles  (see Theorem \ref{th:nogiantcomponent}), implies the following:

\begin{lemma} \label{lem:zeroonefragment}
	Let $p(n)\sim c/n$ with $0 < c <1$. 
	Let $\phi$ be a FO sentence and let $H\in \Ucal$.
	Then 
	\[
	\Ln \pr \big( \,
	G(n,p) \models \phi \, \big| \, \frag_n\simeq H\,
	\big)=\text{$0$ or $1$}.
	\]
Moreover, the value of the limit depends only on 
$\phi$ and $H$, and not on $c$. 
\end{lemma}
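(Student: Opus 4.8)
The plan is to obtain this conditional zero--one law by combining Lynch's locality theorem with the subcritical component structure of $G(n,c/n)$, and then to observe that $c$ enters only through almost-sure statements, never through their limiting value. Set $k=\qr(\phi)$ and $r=3^k$. By Theorems 4.7, 4.8 and 4.9 of~\cite{lynch} there is a function $F_\phi$, depending on $\phi$ alone, such that a.a.s.
\[
\mathbf{1}\big[\,G(n,c/n)\models\phi\,\big] \;=\; F_\phi\big(\mathcal{M}_r(G(n,c/n))\big),
\]
where $\mathcal{M}_r(G)$ records the isomorphism types of the induced unicyclic subgraphs of $G$ of diameter at most $r$, with multiplicities (for $F_\phi$ to have uniform finite domain one caps each multiplicity at a threshold $t=t(\phi)$, as the Ehrenfeucht--Fra\"iss\'e analysis in \cite{lynch} naturally provides). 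The entire point is that once we condition on $\frag_n\simeq H$ the datum $\mathcal{M}_r(G_n)$ is a.a.s.\ the fixed quantity $\mathcal{M}_r(H)$, so $F_\phi$ returns a value depending only on $\phi$ and $H$.

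First I would note that conditioning on $\{\frag_n\simeq H\}$ preserves almost-sure events. Indeed $\pr(\frag_n\simeq H)$ tends to a strictly positive limit: the counts of unicyclic components of each isomorphism type converge jointly to independent Poisson variables with positive means (via Lemma~\ref{th:factorialmethod}, along the lines of the proof of Lemma~\ref{thm:sizefragment}), so every prescribed finite configuration $H\in\Ucal$ has positive limiting probability. Consequently $\pr(A_n)\to1$ implies $\pr(A_n\mid\frag_n\simeq H)\to1$. Hence it suffices to show that a.a.s.\ $\mathcal{M}_r(G_n)=\mathcal{M}_r(\frag_n)$; on the event $\{\frag_n\simeq H\}$ the fragment is by definition a copy of $H$, so $\mathcal{M}_r(\frag_n)=\mathcal{M}_r(H)$, and then a.a.s.\ (conditionally) $\mathcal{M}_r(G_n)=\mathcal{M}_r(H)$, whence $\pr(G_n\models\phi\mid\frag_n\simeq H)\to F_\phi(\mathcal{M}_r(H))\in\{0,1\}$.

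The identity $\mathcal{M}_r(G_n)=\mathcal{M}_r(\frag_n)$ is where $c<1$ is used, via Lemma~\ref{th:nogiantcomponent}: a.a.s.\ every component of $G_n$ is a tree or a unicycle, so the non-tree components of $G_n$ are precisely the components of $\frag_n$. A unicyclic subgraph is connected by definition, hence an induced unicyclic subgraph of $G_n$ lies inside a single component; a connected subgraph of a tree is acyclic and so cannot be unicyclic; therefore a.a.s.\ every induced unicyclic subgraph of $G_n$ lies inside a unicyclic component, i.e.\ inside $\frag_n$. The reverse inclusion is trivial, so $\mathcal{M}_r(G_n)=\mathcal{M}_r(\frag_n)$ a.a.s. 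Putting the three steps together gives the limit $F_\phi(\mathcal{M}_r(H))$, which manifestly involves neither $n$ nor $c$ — exactly the stated assertion.

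The genuinely delicate point is not any individual step but making them fit the precise form of Lynch's theorem: one must check that the extracted function $F_\phi$ (equivalently, the radius $3^k$ and the multiplicity threshold) is purely combinatorial and carries no trace of $c$, and that the notion of an induced unicyclic subgraph of diameter at most $3^k$ used in~\cite{lynch} is exactly the one for which the component-containment argument above goes through. The only auxiliary fact to nail down is the positivity of $\pr(\frag_n\simeq H)$, which follows from the asymptotic independence and Poisson limits of the unicyclic-component counts already sketched in the proof of Lemma~\ref{thm:sizefragment}.
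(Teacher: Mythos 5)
Your proposal is correct and follows essentially the same route as the paper, which likewise derives the lemma by combining Lynch's Theorems 4.7--4.9 (truth of $\phi$ is a.a.s.\ determined by the induced unicycles of diameter at most $3^{\qr(\phi)}$) with Lemma~\ref{th:nogiantcomponent} on the subcritical component structure. You merely make explicit two points the paper leaves implicit — that $\pr(\frag_n\simeq H)$ has a positive limit so conditioning preserves a.a.s.\ events, and that a.a.s.\ the relevant unicyclic subgraphs all lie in $\frag_n$ — which is a harmless (indeed welcome) refinement of the same argument.
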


Because of \Cref{th:nogiantcomponent}, when $0<c< 1$ a.a.s. all cycles in $G_n$
are contained in  unicyclic components. Since the  expected number $f(c)$ of cycles in $G_n$
is asymptotically bounded we obtain the following.

\begin{corollary}\label{cor:nocyclesincomplexcomponents}
	Let $p(n)\sim c/n$ with $0<c<1$, and   let
	$Z_n$ be the random variable equal to  the number of  cycles in $G(n,p)$ that
	belong to connected components that are not trees or unicycles. 
	Then 
	\[
	\Ln \mathrm{E}[Z_n]=0.
	\]
\end{corollary}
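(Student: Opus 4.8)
The plan is to split the cycles counted by $Z_n$ according to their length and to exploit that, by \Cref{th:nogiantcomponent}, the event $B_n$ that $G(n,p)$ has a connected component which is neither a tree nor a unicycle satisfies $\pr[B_n]\to 0$. The first observation is that $Z_n$ is supported on $B_n$: every cycle lies in exactly one component, a tree contains no cycle, and the unique cycle of a unicyclic component is not counted by $Z_n$, so $Z_n=Z_n\cdot\mathbf 1[B_n]$. Hence $\mathrm E[Z_n]$ will be small provided the a priori unbounded variable $Z_n$ is suitably integrable on the vanishing-probability event $B_n$; making this quantitative is the whole content of the proof.

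To that end I would fix a truncation level $K\ge 3$ and write $Z_n=Z_n^{\le K}+Z_n^{>K}$, separating the relevant cycles of length at most $K$ from the longer ones. The long part is immediate from the uniform bound $\mathrm E[X_{n,k}]\le c^k/(2k)$ recalled just before \Cref{lem:idependentcycles}:
\[
\mathrm E[Z_n^{>K}]\le\sum_{k>K}\mathrm E[X_{n,k}]\le\sum_{k>K}\frac{c^k}{2k}=:\varepsilon_K ,
\]
and $\varepsilon_K\to 0$ as $K\to\infty$ because the series $\sum_{k\ge 3}c^k/(2k)=f(c)$ converges for $c<1$ (as used in the proof of \Cref{cor:acyclic}). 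Crucially this bound is uniform in $n$.

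For the short part, note that $Z_n^{\le K}\le W_n^{\le K}:=\sum_{k=3}^{K}X_{n,k}$ and that $Z_n^{\le K}$ is again supported on $B_n$, so Cauchy--Schwarz gives $\mathrm E[Z_n^{\le K}]\le\bigl(\mathrm E[(W_n^{\le K})^2]\bigr)^{1/2}\,\pr[B_n]^{1/2}$. Since $\pr[B_n]\to 0$, it suffices to bound $\mathrm E[(W_n^{\le K})^2]$ uniformly in $n$ for each fixed $K$. This is the one genuinely technical step: $\mathrm E[(W_n^{\le K})^2]=\sum_{3\le j,k\le K}\mathrm E[X_{n,j}X_{n,k}]$ is a fixed finite sum, and each term is $O(1)$ because a pair consisting of a $j$-cycle and a $k$-cycle that shares at least one vertex spans a subgraph with at least one more edge than vertices, whose expected number of copies is $O(1/n)$, whereas the vertex-disjoint pairs contribute at most $\mathrm E[X_{n,j}]\,\mathrm E[X_{n,k}]\le c^{j+k}/(4jk)$; altogether $\mathrm E[(W_n^{\le K})^2]\le f(c)^2+o(1)$. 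This is exactly the flavour of second-moment estimate behind the method-of-moments proof of \Cref{lem:idependentcycles}. Consequently $\mathrm E[Z_n^{\le K}]\to 0$ for every fixed $K$.

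Combining the two parts, $\limsup_{n\to\infty}\mathrm E[Z_n]\le\varepsilon_K$ for every $K$, and letting $K\to\infty$ yields $\limsup_n\mathrm E[Z_n]=0$; since $Z_n\ge 0$ this gives $\lim_n\mathrm E[Z_n]=0$. The step I expect to be the main obstacle is the uniform second-moment bound $\sup_n\mathrm E[(W_n^{\le K})^2]<\infty$ (everything else follows directly from the stated lemmas). One could instead try to avoid truncation by arguing that the total cycle count $W_n=\sum_{k\ge 3}X_{n,k}$ converges in distribution to a $\mathrm{Poisson}(f(c))$ variable with convergent second moment, and then bound $\mathrm E[W_n\cdot\mathbf 1[B_n]]$ by Cauchy--Schwarz; but that requires handling an infinite family of cycle counts at once, so I would prefer the truncated version, which uses only the finite-family statement of \Cref{lem:idependentcycles}.
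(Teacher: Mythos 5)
Your proof is correct and takes essentially the same approach as the paper, which obtains the corollary by combining \Cref{th:nogiantcomponent} with the uniformly bounded expected cycle counts; you simply make the uniform-integrability step that the paper leaves implicit explicit, via truncation in the cycle length (using $\mathrm{E}[X_{n,k}]\le c^k/(2k)$ for the tail) and a Cauchy--Schwarz/second-moment bound on the event $B_n$. One negligible slip: $\mathrm{E}[(W_n^{\le K})^2]$ tends to $\mu_K^2+\mu_K$ with $\mu_K=\sum_{k=3}^{K}c^k/(2k)$ (the diagonal terms contribute the extra $\mu_K$), not to at most $f(c)^2+o(1)$, but since only uniform boundedness in $n$ is needed, nothing in your argument breaks.
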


Let $\aut(H)$ denote the number of automorphisms of a graph $H$.

\begin{lemma} \label{lem:unicycliccomponents}
	Let $p(n)\sim c/n$ with $c>0$.
	Let $T$ be a finite set of unlabeled
	unicycles. For each $H\in T$ let 
	$X_{n,H}$ be the random variable equal to  the number of connected components in $G(n,p)$ isomorphic to $H$, and let
	$\lambda_H=\frac{(e^{-c}c)^{|H|}}{\aut(H)}$. 	Then
	\[
	\Ln \pr\left(
	\bigwedge_{H\in T} X_{n,H}=a_H
	\right) = \prod_{H\in T} 
	e^{-\lambda_H}\frac{\lambda_H^{a_H}}{a_H!}.
	\]
	In other words, the $X_{n,H}$ converge in distribution
	to independent Poisson variables with respective 	means~$\lambda_H$. 
\end{lemma}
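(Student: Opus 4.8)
The plan is to apply Brun's sieve (Lemma \ref{th:factorialmethod}) with the variables $X_{n,H}$ for $H \in T$, so it suffices to show that for every choice of nonnegative integers $(a_H)_{H \in T}$ we have
\[
\Ln \ex\left[ \prod_{H \in T} \binom{X_{n,H}}{a_H} \right] = \prod_{H \in T} \frac{\lambda_H^{a_H}}{a_H!}.
\]
The quantity $\binom{X_{n,H}}{a_H}$ counts the number of unordered $a_H$-tuples of distinct components of $G(n,p)$ each isomorphic to $H$; expanding the product, the expectation becomes a sum over ways of choosing, for each $H$, an $a_H$-set of pairwise vertex-disjoint copies of $H$ inside $\{1,\dots,n\}$, of the probability that all these copies appear as isolated components simultaneously.

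First I would count the labelled placements. For a single unlabelled unicycle $H$ with $|H|=h$ vertices, the number of ways to plant a labelled copy of $H$ on a prescribed $h$-set of vertices is $h!/\aut(H)$, and the number of ways to choose the $h$-set and plant the copy is $\binom{n}{h} h!/\aut(H) = n^{h}/\aut(H) \cdot (1+o(1))$. For an $a_H$-set of disjoint such copies the count is $\frac{1}{a_H!}\left(\frac{n^h}{\aut(H)}\right)^{a_H}(1+o(1))$, the $1/a_H!$ accounting for the unordering and the disjointness costing only a lower-order correction. Multiplying over $H \in T$, the total number of configurations is $(1+o(1)) \prod_{H \in T} \frac{1}{a_H!}\left(\frac{n^{|H|}}{\aut(H)}\right)^{a_H}$.

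Next I would estimate the probability that a fixed configuration occurs. Let $m = \sum_H a_H |H|$ be the total number of vertices used and $e = \sum_H a_H |H|$ the total number of edges (a unicycle has as many edges as vertices). The probability that all prescribed edges are present is $p^{e} = (c/n)^{m}$. The probability that these $m$ vertices send no edge to the remaining $n-m$ vertices is $(1-p)^{m(n-m)} = e^{-cm}(1+o(1))$, using $(1-c/n)^{mn} \to e^{-cm}$ and that $m$ is fixed; the probability that there are no further edges among the $m$ vertices beyond those of the copies contributes only $1+o(1)$. Hence each configuration occurs with probability $(1+o(1)) (c/n)^{m} e^{-cm} = (1+o(1)) n^{-m}(ce^{-c})^{m}$. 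Multiplying by the number of configurations and noting $\sum_H a_H|H| = m$ so the $n^{|H|a_H}$ factors cancel the $n^{-m}$, we get
\[
\ex\left[ \prod_{H \in T} \binom{X_{n,H}}{a_H} \right] = (1+o(1)) \prod_{H \in T} \frac{1}{a_H!}\left(\frac{(ce^{-c})^{|H|}}{\aut(H)}\right)^{a_H} = (1+o(1))\prod_{H \in T} \frac{\lambda_H^{a_H}}{a_H!},
\]
as required; the conclusion then follows directly from Lemma \ref{th:factorialmethod}.

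The main obstacle is the bookkeeping needed to justify that all the error terms are genuinely $o(1)$ uniformly: one must check that configurations in which the chosen copies are not vertex-disjoint, or in which extra edges appear among the used vertices, or in which a used vertex is adjacent to an unused vertex, all contribute negligibly, and that the inclusion--exclusion implicit in ``these components are isolated'' can be truncated. Since $T$ and all the $a_H$ are fixed, $m$ is a fixed constant, so each of these corrections is a fixed power of $1/n$ smaller than the main term and the uniformity is automatic; the argument is standard (it is the same moment computation as in Lemma \ref{lem:idependentcycles} and \cite[Theorem 1.23]{bollobas2001random}), but writing it cleanly is the only real work.
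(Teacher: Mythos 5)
Your proposal is correct and follows essentially the same route as the paper, which proves the lemma by a straightforward application of the factorial moment method (Brun's sieve, Lemma~\ref{th:factorialmethod}) along the lines of Theorem~4.8 in \cite{bollobas2001random}; your explicit computation of the factorial moments is exactly the content that the paper leaves to that reference. The only cosmetic remark is that configurations with overlapping copies contribute probability exactly zero (two distinct planted copies sharing a vertex cannot both be components), so no error estimate is needed for them.
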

\begin{proof}
The proof is  a slight modification of Theorem 4.8 in \citealp{bollobas2001random}.
	It follows from a straightforward  application of Theorem 	\ref{th:factorialmethod}.	
\end{proof}

We also need a classical result conjectured by Kakeya \cite{kakeya} and later 
proven in \cite{nymann2000paper} on 
the set of subsums of a convergent series of non-negative terms. 
\begin{lemma}
	\label{kakeya}
	Let $\sum _{n\ge 0}p_n$ be a convergent series of non-negative real numbers. Then 
	the following are equivalent:
	\begin{itemize}
		\item[(1)] $p_i \le \sum_{j>i} p_j$ for all $i \ge 0$.
		\item[(2)]
		\[
		\left\{\sum_{i \in A} p_i  \colon  A \subset  \mathbb{N}\right\}
		= \left[0, \sum_{n=0}^{\infty} p_n\right].
		\]
	\end{itemize}
	Moreover, if the condition $p_i \le \sum_{j>i} p_j$ holds for all values of $i$ 
	large enough, 
	then the set $\left\{\sum_{i \in A} p_i  \colon  A \subset  \mathbb{N}\right \}$
	is a finite union of intervals. 
\end{lemma}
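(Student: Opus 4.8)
The plan is to prove the three assertions --- $(1)\Rightarrow(2)$, $(2)\Rightarrow(1)$, and the ``moreover'' clause --- essentially separately, the workhorse being a greedy ``interval‑filling'' argument. A useful preliminary observation is that the set of subsums $\mathcal S:=\{\sum_{i\in A}p_i:A\subseteq\N\}$ is always compact: it is the image of $\{0,1\}^{\N}$ (compact in the product topology) under the map $(\varepsilon_i)_i\mapsto\sum_i\varepsilon_ip_i$, which is uniformly continuous because $\sum_ip_i<\infty$. In particular ``interval'' may be read as ``closed interval'' throughout, consistent with the phrasing of Theorem~\ref{th:main}. Also $\mathcal S$ is unchanged by permuting the terms, so for the implication $(2)\Rightarrow(1)$ I would additionally assume the $p_i$ listed in non-increasing order (as is standard in Kakeya's theorem, and as will hold in our applications).

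\emph{The implication $(1)\Rightarrow(2)$} is the heart of the matter. Fix $x\in[0,S]$ with $S=\sum_ip_i$ and construct $A$ greedily, scanning $i=0,1,2,\dots$ and maintaining the invariant $0\le x-s_i\le T_i$, where $s_i$ denotes the sum of the terms already chosen from $\{0,\dots,i-1\}$ and $T_i:=\sum_{j\ge i}p_j$. At step $i$ put $i\in A$ iff $x-s_i\ge p_i$. If $i$ is chosen the invariant survives since $x-s_{i+1}=(x-s_i)-p_i\le T_i-p_i=T_{i+1}$; if $i$ is not chosen it survives \emph{precisely because} hypothesis $(1)$ gives $x-s_{i+1}=x-s_i<p_i\le\sum_{j>i}p_j=T_{i+1}$. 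Since $s_i\le x$ for all $i$ and $0\le x-s_i\le T_i\to0$, we get $\sum_{i\in A}p_i=\lim_is_i=x$; thus $[0,S]\subseteq\mathcal S$, and the reverse inclusion is trivial.

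\emph{The ``moreover'' clause} follows from the previous step applied to a tail. If $p_i\le\sum_{j>i}p_j$ for all $i\ge N$, then the sequence $(p_i)_{i\ge N}$ satisfies hypothesis $(1)$ verbatim, so its set of subsums equals $[0,T_N]$. Writing an arbitrary $A\subseteq\N$ as $A'\sqcup A''$ with $A'\subseteq\{0,\dots,N-1\}$ and $A''\subseteq\{N,N+1,\dots\}$ yields
\[
\mathcal S=\bigcup_{A'\subseteq\{0,\dots,N-1\}}\Bigl[\,\sum_{i\in A'}p_i,\ \sum_{i\in A'}p_i+T_N\,\Bigr],
\]
a union of at most $2^{N}$ closed intervals, hence a finite union of intervals.

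\emph{For $(2)\Rightarrow(1)$}, assume $\mathcal S=[0,S]$ but, towards a contradiction, $p_i>R_i:=\sum_{j>i}p_j$ for some $i$ (so $p_i>0$). Because the $p_j$ are non-increasing, every nonempty subset of $\{p_0,\dots,p_i\}$ sums to at least $p_i$. Splitting any subsum as $\sum_{j\in A\cap\{0,\dots,i\}}p_j+\sum_{j\in A\cap\{i+1,\dots\}}p_j$, with the second term in $[0,R_i]$, shows $\mathcal S\subseteq[0,R_i]\cup[p_i,S]$, which misses the nonempty open interval $(R_i,p_i)$ --- contradicting $\mathcal S=[0,S]$. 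I expect the main thing to get right is the precise invariant in the greedy construction for $(1)\Rightarrow(2)$ (where hypothesis $(1)$ enters in exactly one of the two branches); by contrast the forward implication and the ``moreover'' clause do not use monotonicity of the $p_i$ at all, and isolating where it is genuinely needed --- only in $(2)\Rightarrow(1)$ --- is the one conceptual subtlety.
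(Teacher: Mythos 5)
Your argument is correct, and it is essentially the standard greedy ``interval-filling'' proof of Kakeya's criterion; note, however, that the paper itself gives no proof of Lemma~\ref{kakeya} at all --- it is quoted as a classical result, conjectured by Kakeya and proved by Nymann, so there is no in-paper argument to compare against, and your proposal supplies a self-contained justification. Two remarks. First, your invariant $0\le x-s_i\le T_i$ is exactly right, and you correctly isolate that hypothesis $(1)$ is needed only in the ``not chosen'' branch; the same tail argument then gives the ``moreover'' clause as a union of at most $2^N$ closed intervals, and neither of these steps uses monotonicity --- these are precisely the two directions the paper actually invokes (in Sections~\ref{sec:finitegaps} and~\ref{sec:nogap}), so your proof covers everything needed. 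Second, your decision to assume the $p_i$ non-increasing for $(2)\Rightarrow(1)$ is not merely cosmetic: as literally stated (with an arbitrary ordering) that implication is false --- e.g.\ $p_0=\tfrac12,\ p_1=\tfrac14,\ p_2=1,\ p_3=\tfrac18,\ p_4=\tfrac1{16},\dots$ has subsum set $[0,2]$ but $p_2>\sum_{j>2}p_j$ --- so the paper's formulation tacitly presupposes the usual non-increasing enumeration of Kakeya--Nymann (which is also the ordering the paper uses for the fragments, despite the typo ``$p_{H_i}\le p_{H_j}$ for $i\le j$'' in Section~\ref{sec:finitegaps}). Your reverse implication under that assumption, via the observation that every nonempty subset of $\{p_0,\dots,p_i\}$ sums to at least $p_i$ and hence the subsum set avoids $(\sum_{j>i}p_j,\ p_i)$, is correct.
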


\section{Proof of Theorem \ref{th:main}}\label{sec:main}


Before proceeding with the proof we outline the  main components in the proof:
\begin{itemize}
	\item For $c\geq 1$ we can approximate any  $p\in [0,1]$ with
	probabilities of statements of the form ``there are at most $l$ cycles
	of length at most $k$ in $G(n,c/n)$'' (\Cref{sec:nogap_supercritical}).
	
	\item We prove that the constant $c_0$ given by \Cref{eq:c0} satisfies 
	that $\overline{L_c}$ contains at least one gap whenever $c<c_0$ (\Cref{sec:c0}).
	
	\item For $c<1$ we show that 
	\[
	\overline{L_c} = \left\{ \sum_{H \in T} p_H(c) \colon T\subseteq \Ucal	
	\right\},	
	\]
	where $p_H(c)= \Ln \pr\big(\frag_n \simeq H \big)$ is given by \Cref{eq:prob-fragment}.
	In other words, $\overline{L_c}$ coincides with the set of  subsums of the convergent  series
	$\ds\sum_{H\in \Ucal}p_H(c)=1$.
 
	\item Using Kakeya's Criterion (\Cref{kakeya}) we show that the set $\overline{L_c}$  of 
	subsums of 	$\sum_{H\in \Ucal}p_H(c)$
 is always  a finite union of intervals (\Cref{sec:finitegaps}). 
		
	\item Using Kakeya Criterion  (\Cref{kakeya}) once again we prove that $\overline{L_c}$  is the whole interval
	$[0,1]$ for $c_0\leq c<1$ (\Cref{sec:nogap}). 
\end{itemize}

\subsection{No gap when $c \geq 1$} \label{sec:nogap_supercritical}

Let $X_k$ be as before  the number of cycles of length  $k$ in $G(n,c/n )$, which is 
asymptotically $\hbox{Po}(c^k/(2k)$). Moreover, 
for fixed $k$, the random variables $X_3, \dots, X_k$
are asymptotically independent by \Cref{lem:idependentcycles}. Hence for fixed $k$,  
$$ X_{\leq k} = X_3 + \dots + X_k \xrightarrow[n\to\infty]{\text{d}} \hbox{Po}\left( \sum_{i=3}^k \frac{c^k}{2k} \right).
$$
Since $c \geq 1$ the mean $\sum_{i=3}^{k}c^k/2k$ is not bounded as $k$ grows to infinity
so we can pick $k$ such that this mean is as large as we like.
Note that for any $k$ and $a$ the property that $X_{\leq k} \leq a$ 
can be expressed in FO logic.
By the central limit theorem we have, for any fixed $x \in \mathbb{R}$
$$ \pr( \hbox{Po}(\mu) \leq \mu + x \sqrt{\mu} ) \xrightarrow[\mu\to\infty]{} \Phi(x), $$
where $\Phi(x)= \frac{1}{\sqrt{2\pi}}\int_{-\infty}^{x} e^{-t^2/2}dt$ is the c.d.f. of the standard normal law. 

For $0 < p < 1$ and $\epsilon > 0$ we can find $x$ such that $\Phi(x) = p$, a value 
$\mu_0$ such that $\pr(\mathrm{Po}(\mu) \leq \mu + x \sqrt{\mu} ) \in (p-\epsilon,p+\epsilon)$ for all $\mu\geq\mu_0$, and then 
finally a $k$ such that $ \sum_{i=3}^k \frac{c^k}{2k} \geq \mu_0$.
Hence there exists a FO property $\phi$ with limiting probability within $\epsilon$ of $p$.

\subsection{At least one gap when $c<c_0$} \label{sec:c0}

Let $e^{-f(c)}$ be as in Corollary  \ref{cor:nocyclesincomplexcomponents}  the limiting probability that $G_n=G(n,c/n)$ is acyclic. By elementary calculus  $e^{-f(c)}$
is strictly decreasing for $c\in [0,1]$ (see Figure \ref{fig:plot}), and by definition of $c_0$ we have $e^{-f(c_0)}=1/2$. 

Fix $c<c_0$, and let $p(n)\sim c/n$. 
We are going to show that $\overline{L_c}$ 
has a gap around $1/2$.
Let $A_n$ be the event that $G_n$ is acyclic and 	let $\phi$ be a FO sentence and let $p(\phi)=  \Ln\pr\big(G_n\models \phi)$. Then 
	\begin{align}\nonumber
	p(\phi) &= \Ln \pr\big(G_n\models \phi \big| A_n \big)  \pr(A_n) + 
	\pr\Big(G_n\models \phi \big| \neg A \Big) \pr( \neg A_n)\\
	\label{eq:aux1}
	&= \Ln \pr\big(G_n\models \phi\, \big|\, A_n\big) e^{-f(c)} +
	 \pr\big(G_n\models \phi\, \big|\, A_n\big) (1-e^{-f(c)}).	 
	\end{align}
	Because of Lemma \ref{lem:zeroonefragment} we have
	\begin{equation*}
	\Ln \pr\big(G_n\models \phi\, 
	\Big| \, A_n \big)=\text{$0$ or $1$}.
	\end{equation*}
If the last limit equals 0 then from 
	 \Cref{eq:aux1} we obtain that $p(\phi)\leq 1- e^{-f(c)}$. 	Otherwise, if the limit is 1, we have $p(\phi)\geq e^{-f(c)}$.
	Since $e^{-f(c)}$ is strictly decreasing, $c<c_0$  and $e^{f(c_0)}=1/2$ it follows that $e^{-f(c)}>1/2$.
	As a consequence $1-e^{-f(c)}<1/2<e^{-f(c)}$ and $(1-e^{-f(c)},e^{-f(c)})$ is a gap
	of $\overline{L_c}$.

\subsection{Asymptotic distribution of the fragment for $c<1$ and its consequences}\label{sec:frag_dist}

We compute below that the asymptotic probability that the fragment $\frag_n$ is isomorphic to a given union $H$ of unicycles.

\begin{lemma} \label{lem:prob-fragment}
	Let $p(n)\sim c/n$ with $0<c<1$, and let $H\in \Ucal$.
	Then
	\begin{equation}\label{eq:prob-fragment}
	\Ln \pr \big(
	\frag_n \simeq H 
	\big) = e^{-f(c)} \frac{(e^{-c}c)^{|H|}}{\aut(H)}.
	\end{equation}
\end{lemma}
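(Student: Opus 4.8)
The plan is to compute $\Ln \pr(\frag_n \simeq H)$ by decomposing the event according to the cycle structure of $G_n$ and then conditioning appropriately. First I would note that for $0<c<1$, by \Cref{th:nogiantcomponent}, a.a.s.\ every connected component of $G_n$ is a tree or a unicycle, so that a.a.s.\ $\frag_n$ is precisely the union of the unicyclic components of $G_n$. Hence the event $\{\frag_n \simeq H\}$ differs only by a null event from the event that, for each isomorphism type $U$ of unicycle, the number $X_{n,U}$ of components of $G_n$ isomorphic to $U$ equals the multiplicity $a_U$ of $U$ in $H$, while there are \emph{no other} unicyclic components. The natural obstacle is that $H$ prescribes finitely many unicyclic components, but to have $\frag_n\simeq H$ \emph{exactly} one must also rule out all unicyclic components of every other (in particular larger) isomorphism type, of which there are infinitely many.

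To handle this I would introduce, for a parameter $m$, the event that the multiplicities of all unicycles of size at most $m$ match those prescribed by $H$ (assuming $m \ge |H|$, so this forces those of size $>m$ in $H$ to be absent among small sizes), and separately control the probability that $G_n$ has a unicyclic component of size exceeding $m$. For the first part, \Cref{lem:unicycliccomponents} gives that the vector of counts $(X_{n,U})_U$ over unicycles $U$ with $|U|\le m$ converges to independent Poissons with means $\lambda_U = (e^{-c}c)^{|U|}/\aut(U)$; thus the limiting probability that these counts equal the prescribed multiplicities is
\[
\prod_{U : |U|\le m} e^{-\lambda_U}\frac{\lambda_U^{a_U}}{a_U!}
= e^{-\sum_{|U|\le m}\lambda_U}\cdot \prod_{U \text{ in } H}\frac{\lambda_U^{a_U}}{a_U!},
\]
and the product over the $a_U$'s telescopes, using that $H$ has $a_U$ components of type $U$ and $|H|=\sum_U a_U |U|$, into $(e^{-c}c)^{|H|}/\aut(H)$ (since $\aut(H)=\prod_U a_U!\,\aut(U)^{a_U}$ for a disjoint union). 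For the second part, I would use \Cref{thm:sizefragment}: the bound $\mathrm{E}[Y_{n,k}]\le (ce^{1-c})^k e^{c/2}$ for $k$ large, together with the Dominated Convergence argument sketched there, shows $\Ln \pr(\exists\text{ unicyclic component of size}>m) \le \sum_{k>m}\Ln\mathrm{E}[Y_{n,k}] \to 0$ as $m\to\infty$.

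Putting these together, for each $m\ge |H|$ we get
\[
\Bigl|\,\Ln\pr(\frag_n\simeq H) - e^{-\sum_{|U|\le m}\lambda_U}\frac{(e^{-c}c)^{|H|}}{\aut(H)}\,\Bigr| \le \sum_{k>m}C(k,k)(ce^{-c})^k,
\]
and letting $m\to\infty$ the error tends to $0$ while $\sum_{\text{all }U}\lambda_U = \sum_{k\ge3} C(k,k)(ce^{-c})^k = f(c)$ — this last identity being exactly the content of \Cref{thm:sizefragment} together with \Cref{cor:acyclic}, i.e.\ $e^{-f(c)}$ is the limiting probability of having no unicyclic (equivalently, no cyclic) component at all. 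Hence $\Ln\pr(\frag_n\simeq H) = e^{-f(c)}(e^{-c}c)^{|H|}/\aut(H)$, as claimed.

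The main obstacle I anticipate is the interchange of limits: one needs the double limit (over $n$, and over the truncation parameter $m$) to commute, which is precisely why the uniform tail bound $\mathrm{E}[Y_{n,k}]\le (ce^{1-c})^k e^{c/2}$ valid for all $n$ is essential — it lets the Dominated Convergence Theorem control the contribution of large unicyclic components uniformly in $n$. A secondary bookkeeping point, easy but worth stating carefully, is the automorphism identity $\aut(H)=\prod_U a_U!\,\aut(U)^{a_U}$ for $H$ a disjoint union with $a_U$ copies of each connected type $U$, which is what makes the product of the Poisson weights collapse to the clean closed form; one should also double-check that $f(c)=\sum_{U}\lambda_U$ by matching the sum over unicycle types of a given size $k$ against $C(k,k)(ce^{-c})^k$ and then against $c^k/(2k)$ via the known count of labeled unicycles.
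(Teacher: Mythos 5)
Your argument is correct and essentially reproduces the paper's proof: the same decomposition of $\{\frag_n\simeq H\}$ into per-type component counts, the same truncation at a finite size with the tail controlled by Lemma~\ref{thm:sizefragment} and Markov's inequality, the same application of Lemma~\ref{lem:unicycliccomponents} to the finitely many small types, and the same automorphism bookkeeping $\aut(H)=\prod_U a_U!\,\aut(U)^{a_U}$, the only cosmetic difference being that you obtain $\sum_U\lambda_U=f(c)$ from the acyclic probability of Corollary~\ref{cor:acyclic} (via the a.a.s.\ equivalence of ``no cycles'' and ``no unicyclic components''), while the paper derives it from Corollary~\ref{cor:nocyclesincomplexcomponents} and dominated convergence. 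One small caution: your suggested per-size ``double-check'' does not work literally, since the limiting expected number of unicyclic components on $k$ vertices, $(ce^{-c})^k\sum_{|U|=k}1/\aut(U)$, is not equal to the expected number of $k$-cycles $c^k/(2k)$ for fixed $k$ (a $k$-cycle may sit in a larger unicyclic component); only the totals over all $k$ agree, both equalling $f(c)$, which is all your main argument actually needs.
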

\begin{proof}
	
Fix such an $H$.
Let $U_1,U_2,\dots, U_i,\dots$ be an enumeration of all 	unlabeled unicycles ordered by non-decreasing size. For each $i$ let $a_i$ 	be the number of connected components of $H$ that are copies of $U_i$,
and let $W_{n,i}$ be the random variable equal to  the number of connected components in $G_n$ that are isomorphic to $U_i$. Clearly	$\frag_n\simeq H$ if and only if $W_{n,i}=a_i$ for all $i$. 
	Thus,
	\[
	\Ln \pr\big(\frag_n\simeq H\big)= 
	\Ln \pr\big( \bigwedge_{i=1}^\infty W_{n,i}=a_i\big).
	\]
	First, we are going to show that 
	\begin{equation}\label{eq:aux10}
	\Ln \pr \big( \bigwedge_{i=1}^\infty W_{n,i}=a_i\big)=
	\lim\limits_{j\to \infty}
	\Ln \pr \big( \bigwedge_{i=1}^j W_{n,i}=a_i\big).
	\end{equation}
	Fix $\epsilon>0$. 
	For each $k$ let $X_{n,k}$ be the random variable
	that counts the unicyclic connected components of $G_n$
	with exactly $k$ edges.	By \Cref{thm:sizefragment}
	we have that for some $k_0$
	\[ 
	\Ln \sum_{l=k_0}^{\infty }\mathrm{E}\big[ X_{n,l} \big] \leq 
	\epsilon.
	\]
	Let $k_1$ be the maximum number of edges in a connected component of 
	$H$, and let $k = \max(k_0, k_1+1)$. Finally, fix $j_0$ such that 
	$e(U_j)>k_1$ for any $j\geq j_0$. 
	Let $j\geq j_0$. Then $\frag_n \simeq H$ if and only if 
	\[
	\bigwedge_{i=1}^{j} W_{n,i}=a_i, \qquad \text{ and }
	\sum_{\ell=k}^\infty X_{n,l}=0.
	\]
	Using Markov's inequality we get 
	\[
	\Ln \pr\big( \sum_{\ell=k}^\infty X_{n,\ell}\geq 1  \big) \leq \epsilon.
	\]
	And in consequence,
	\[
	\left|\Ln \pr\big( \bigwedge_{i=1}^\infty W_{n,i}=a_i\big)
	- \lim\limits_{j\to \infty}
	\Ln \pr\big( \bigwedge_{i=1}^j W_{n,i}=a_i\big)
	\right|
	\leq \epsilon,
	\]
	proving \cref{eq:aux10}.
	
	Using \Cref{lem:unicycliccomponents} we get
	\[
	\lim\limits_{j\to \infty}
	\Ln \pr\big( \bigwedge_{i=1}^j W_{n,i}=a_i\big)=
	\prod_{i=1}^\infty e^{-\lambda_i} \frac{\lambda_i^{a_i}}{a_i!},
	\]
	where $\lambda_i=\frac{(c e^{-c})^{|U_i|}}{\aut(U_i)}= 
	\Ln \mathrm{E}[W_{n,i}]$. Using \Cref{cor:nocyclesincomplexcomponents} 
	together with the dominated convergence theorem  as in the proof of \Cref{thm:sizefragment} we obtain
	\[
	\sum_{i=1}^{\infty} \lambda_i = f(c),
	\]
	and as a consequence 
	\[
	\prod_{i=1}^\infty e^{-\lambda_i}= e^{-f(c)}=e^{-f(c)}.
	\]
	
	Since $
	\sum_{i=1}^\infty |U_i| a_i = |H|$ and $	\prod_{i=1}^{\infty} \aut(U_i)^{a_i} a_i!= \aut(H)$,
 we finally get
	\[
	\prod_{i=1}^\infty \frac{\lambda_i^{a_i}}{a_i!}
	=\frac{(c e^{-c})^{|H|}}{\aut(H)},
	\]
	and the result follows. 
\end{proof}

Given $H\in \Ucal$ we define $p_H =p_H(c) = \Ln \pr \Big( \frag_n\simeq H  \Big)$.
The following is a direct consequence of the fact that the expected size of $\frag_n$ is
bounded.
\begin{lemma}\label{lem:limitexchangefragment}
	Let $p(n)\sim c/n$ with $0<c<1$, and let $\Tcal\subset \Ucal$. Then
	\[
	\Ln \pr\left(
	\bigvee_{H\in \Tcal} \frag_n\simeq H
	\right) = \sum_{H \in \Tcal} p_H.
	\]
	In particular, $\sum_{H \in \Tcal} p_H=1$. 
\end{lemma}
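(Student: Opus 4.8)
The plan is to deduce \Cref{lem:limitexchangefragment} from \Cref{lem:prob-fragment} together with \Cref{thm:sizefragment}, which guarantees that $\Ln \ex[|\frag_n|]$ is finite. The statement to prove has two parts: first, that for an arbitrary $\Tcal \subseteq \Ucal$ the limit of $\pr\big(\bigvee_{H \in \Tcal} \frag_n \simeq H\big)$ exists and equals $\sum_{H \in \Tcal} p_H$; and second, the special case $\Tcal = \Ucal$ giving $\sum_{H \in \Ucal} p_H = 1$. The key point is that the events $\{\frag_n \simeq H\}$ over $H \in \Ucal$ are \emph{disjoint}, so for any finite $\Fcal \subseteq \Tcal$ we trivially have $\pr\big(\bigvee_{H \in \Fcal} \frag_n \simeq H\big) = \sum_{H \in \Fcal} \pr(\frag_n \simeq H)$, and letting $n \to \infty$ through this finite sum gives $\Ln \pr\big(\bigvee_{H \in \Fcal}\frag_n \simeq H\big) = \sum_{H \in \Fcal} p_H$ by \Cref{lem:prob-fragment}. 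The work is all in passing from finite $\Fcal$ to infinite $\Tcal$, i.e.\ justifying the interchange of the limit in $n$ with the infinite sum over $H$.

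The mechanism for this interchange is a uniform tail bound coming from the bounded expected fragment size, exactly as in the proof of \Cref{lem:prob-fragment}. Concretely, fix $\epsilon > 0$. If $H \in \Ucal$ has $|H| \geq m$, then on the event $\{\frag_n \simeq H\}$ we have $|\frag_n| \geq m$, so by Markov's inequality $\pr\big(\bigvee_{H \in \Ucal,\, |H| \geq m} \frag_n \simeq H\big) \leq \pr(|\frag_n| \geq m) \leq \ex[|\frag_n|]/m$. Since $\Ln \ex[|\frag_n|]$ is finite by \Cref{thm:sizefragment}, there is an $m = m(\epsilon)$ and an $n_0$ such that this probability is at most $\epsilon$ for all $n \geq n_0$; passing to the limit, the tail $\sum_{H \in \Ucal,\, |H| \geq m} p_H \leq \epsilon$ as well. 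Now write $\Tcal = \Tcal_{<m} \cup \Tcal_{\geq m}$ according to whether $|H| < m$, where $\Tcal_{<m}$ is finite. Then
\[
\left|\pr\Big(\bigvee_{H \in \Tcal} \frag_n \simeq H\Big) - \sum_{H \in \Tcal_{<m}} \pr(\frag_n \simeq H)\right| = \pr\Big(\bigvee_{H \in \Tcal_{\geq m}} \frag_n \simeq H\Big) \leq \epsilon
\]
for $n \geq n_0$, using disjointness of the events. Taking $n \to \infty$ on the finite part and combining with $\sum_{H \in \Tcal_{\geq m}} p_H \leq \epsilon$ yields $\big|\limsup_n \pr(\cdots) - \sum_{H \in \Tcal} p_H\big| \leq 2\epsilon$ and similarly for $\liminf$; since $\epsilon$ is arbitrary the limit exists and equals $\sum_{H \in \Tcal} p_H$.

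For the ``in particular'' claim, apply the first part with $\Tcal = \Ucal$. By \Cref{th:nogiantcomponent}, a.a.s.\ every component of $G_n$ is a tree or a unicycle, which means a.a.s.\ $\frag_n \in \Ucal$ (it is a disjoint union of unicyclic components, hence an element of $\Ucal$, with $\frag_n$ being the empty graph corresponding to $H$ with no components, which is also in $\Ucal$). Therefore $\pr\big(\bigvee_{H \in \Ucal} \frag_n \simeq H\big) \to 1$, so $\sum_{H \in \Ucal} p_H = 1$.

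The main obstacle is purely the uniformity in the tail estimate: one must be careful that the bound $\pr(|\frag_n| \geq m) \leq \ex[|\frag_n|]/m$ is controlled \emph{uniformly in $n$}, which is exactly what \Cref{thm:sizefragment} provides since $\ex[|\frag_n|]$ converges to a finite limit and is therefore bounded over all $n$. Everything else is the disjointness of the fragment-isomorphism events and a routine $\epsilon/2$ argument; no new probabilistic input beyond \Cref{lem:prob-fragment}, \Cref{thm:sizefragment} and \Cref{th:nogiantcomponent} is needed.
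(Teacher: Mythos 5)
Your proof is correct and follows essentially the same route as the paper's: disjointness of the events $\{\frag_n\simeq H\}$ for the finite part, plus Markov's inequality with the uniformly bounded $\ex[|\frag_n|]$ from \Cref{thm:sizefragment} to control the tail and justify exchanging the limit with the infinite sum. The only cosmetic difference is that for the ``in particular'' claim you invoke \Cref{th:nogiantcomponent}, which is not needed: $\frag_n$ is by definition a union of unicyclic components, so it is isomorphic to some element of $\Ucal$ for every $n$ and the event $\bigvee_{H\in\Ucal}\frag_n\simeq H$ has probability exactly $1$.
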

\begin{proof}
	If $\Tcal$ is finite then the statement is clearly true, 	since the events $\frag_n\simeq H$ are disjoint for different $H$.
	Suppose otherwise. Let $H_1,\dots, H_i,\dots$ be an enumeration of $\Tcal$ by non-decreasing size. 
	Fix $\epsilon>0$.
	Let $m=\Ln \mathrm{E}\big[ |\frag_n| \big]$, and let $M=m/\epsilon$. 
	Then there exists  $j_0$ such that  $E(H_j)\geq M$  for all $j\geq j_0$.
	Using Markov's inequality we obtain that for any $j\geq j_0$
	\begin{align*}
	&\Ln \left|
	\pr\left( 
	\bigvee_{H\in T} \frag_n \simeq H
	\right) - \sum_{i=1}^{j} p_{H_i}	
	\right|	\\
	&=\Ln \left|
	\pr\left( 
	\bigvee_{H\in T} \frag_n \simeq H
	\right) - \pr\left(
	\sum_{i=1}^{j} \frag_n \simeq H \right)
	\right| \\ &\leq \Ln \pr\Big(
	|\frag_n|>M
	\Big) \leq \epsilon.
	\end{align*}
	As our choice of $\epsilon$ was 
	arbitrary this proves the statement. 
\end{proof}

We define $S_c$ as the set of subsums of $\sum_{H\in \Ucal}p_H(c)$,
\[
S_c := \Big\{ \sum_{H \in \Tcal} p_H(c) \colon  \Tcal\subseteq \Ucal
\Big\}.
\]

\begin{theorem} \label{th:partialsums}
	Let $0<c<1$. Then
	$\overline{L_c}=S_c$. 
\end{theorem}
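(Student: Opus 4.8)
The plan is to prove the two inclusions $\overline{L_c} \subseteq S_c$ and $S_c \subseteq \overline{L_c}$ separately, using the fragment decomposition from Lemma \ref{lem:prob-fragment} together with the zero-one law conditional on the fragment (Lemma \ref{lem:zeroonefragment}). The guiding principle is that, for $c<1$, the asymptotic truth value of any FO sentence is determined by the (random) isomorphism type of $\frag_n$, so limiting probabilities of sentences are exactly the total masses of certain ``fragment events.''

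For $\overline{L_c} \subseteq S_c$: first I would show $L_c \subseteq S_c$. Fix a FO sentence $\phi$. For each $H\in\Ucal$, Lemma \ref{lem:zeroonefragment} gives $\Ln \pr(G_n\models\phi \mid \frag_n\simeq H) \in\{0,1\}$; let $\Tcal_\phi = \{H\in\Ucal : \text{this limit is }1\}$. Conditioning on the isomorphism type of the fragment and using that a.a.s.\ $\frag_n\in\Ucal$ (Lemma \ref{th:nogiantcomponent}) together with $\sum_{H\in\Ucal}p_H(c)=1$ (Lemma \ref{lem:limitexchangefragment}), I would argue $p_c(\phi) = \Ln\pr(G_n\models\phi) = \sum_{H\in\Tcal_\phi} p_H(c)$; the interchange of the limit with the infinite sum is justified exactly as in Lemma \ref{lem:limitexchangefragment}, via the bounded expected fragment size and Markov's inequality, since sentences insensitive to large components contribute negligibly. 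Hence $p_c(\phi)\in S_c$, so $L_c\subseteq S_c$. Then I would observe $S_c$ is closed: this follows from the ``moreover'' part of Kakeya's Lemma \ref{kakeya} (the set of subsums of a convergent non-negative series is a finite union of closed intervals, in particular closed), which is developed later but may be invoked. Therefore $\overline{L_c}\subseteq S_c$.

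For $S_c\subseteq\overline{L_c}$: given $\Tcal\subseteq\Ucal$, I want to realize $\sum_{H\in\Tcal}p_H(c)$ as a limit of limiting probabilities of sentences. If $\Tcal$ is finite, the event ``$\frag_n$ is isomorphic to some $H\in\Tcal$'' can be captured exactly by a FO sentence (one can express, up to bounded size, the full list of component isomorphism types; since each $H\in\Tcal$ has bounded size this is a finite first-order description, and the condition that these are \emph{all} the cyclic components is also first-order given no giant component), so $\sum_{H\in\Tcal}p_H(c)\in L_c\subseteq\overline{L_c}$. For infinite $\Tcal$, I would truncate: let $\Tcal_j$ be the members of $\Tcal$ of size at most $j$; then $\sum_{H\in\Tcal_j}p_H(c)\in L_c$ and, by Lemma \ref{lem:limitexchangefragment} (tail of the series vanishes because $\sum_H |H|p_H(c)<\infty$), $\sum_{H\in\Tcal_j}p_H(c)\to\sum_{H\in\Tcal}p_H(c)$ as $j\to\infty$. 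Hence the target value lies in $\overline{L_c}$.

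The main obstacle I anticipate is the careful justification that the asymptotic truth value of $\phi$ depends \emph{only} on $\frag_n$ and that one may legitimately sum the zero-one contributions over the infinitely many fragment types — i.e.\ making rigorous the exchange of $\Ln$ with the infinite sum over $H\in\Ucal$. This requires combining Lynch's locality result (the truth of $\phi$ is a.a.s.\ determined by induced unicycles of bounded diameter) with the bounded expected fragment size (Lemma \ref{thm:sizefragment}) so that the ``error'' from large or complex components is $o(1)$; the bookkeeping mirrors the proof of Lemma \ref{lem:limitexchangefragment} but must be done uniformly. A secondary, more routine point is writing down the FO sentence that pins the fragment down to a finite list of component types, which uses that no giant component exists a.a.s.\ for $c<1$.
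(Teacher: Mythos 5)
Your first inclusion, $\overline{L_c}\subseteq S_c$, follows the paper's own route: condition on the isomorphism type of $\frag_n$, apply Lemma \ref{lem:zeroonefragment}, and justify exchanging $\Ln$ with the infinite sum over $\Ucal$ via Lemma \ref{thm:sizefragment} and Markov's inequality. One small remark: for the closedness of $S_c$ the paper cites the classical fact that the set of subsums of a convergent nonnegative series is always perfect, hence closed; your appeal to the ``moreover'' clause of Lemma \ref{kakeya} requires its hypothesis $p_i\le\sum_{j>i}p_j$ for all large $i$, which is only verified in Section \ref{sec:finitegaps} (not circular, but the unconditional closedness statement is the cleaner tool).

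The genuine gap is in $S_c\subseteq\overline{L_c}$, at the finite-$\Tcal$ step. You claim that for finite $\Tcal$ the event ``$\frag(G)\simeq H$ for some $H\in\Tcal$'' is captured \emph{exactly} by a FO sentence, because ``the condition that these are all the cyclic components is also first-order given no giant component.'' This is false: asserting that there are no unicyclic components besides the listed ones amounts to asserting that the rest of the graph is acyclic, and acyclicity (absence of cycles of arbitrary length) is not first-order expressible; a sentence of quantifier rank $k$ cannot exclude a unicyclic component of diameter much larger than $3^{k}$, and conditioning on the a.a.s.\ absence of complex components does not help. Moreover the discrepancy does not vanish as $n\to\infty$ for a fixed sentence: for any fixed bound $M$, the limiting probability that $G_n$ contains a unicyclic component of size larger than $M$ is strictly positive, so a sentence that only inspects bounded pieces of the fragment has limiting probability strictly larger than $\sum_{H\in\Tcal}p_H$ in general. (In fact, by Lynch's locality result quoted before Lemma \ref{lem:zeroonefragment}, the set $\Tcal_\phi$ attached to a sentence $\phi$ is unchanged by adding unicycles of diameter exceeding $3^{\qr(\phi)}$, so it is infinite whenever nonempty; finite nonempty subsums typically lie in $\overline{L_c}\setminus L_c$.) The paper's proof addresses exactly this point: it replaces the exact fragment by the bounded fragment $\frag^M_n$ (unicyclic components with at most $M$ edges), observes that ``$\frag^M(G)\simeq H$ for some $H\in\Tcal\cap\,\Ucal_{\leq M}$'' \emph{is} FO for fixed $M$, and chooses $M=2m/\epsilon$ so that by Markov's inequality the limiting probability of this sentence is within $\epsilon$ of $\sum_{H\in\Tcal}p_H$. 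Once the finite case is repaired in this way (yielding membership in $\overline{L_c}$ rather than $L_c$, which is all you need), your truncation argument for infinite $\Tcal$ goes through.
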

\begin{proof}
	We first show that  $\overline{L_c}\subseteq S_c$.
	It is a known fact \cite{kakeya}, 
	\cite{hornich1941beliebige}, \cite{nymann2000paper}
	that $S_c$ is a perfect set,  in particular
	$S_c$ is closed and $\overline{S_c}=S_c$. Thus it is enough  to show that
	$L_c\subset S_c$.
	
	Fix a FO property $P$. We want to prove that
	$\Ln \pr(P(G_n))$ lies in $S_c$. That is, we want 
	to show that for some $\Tcal\subseteq \Ucal$
	\begin{equation} \label{eq:aux2}
	\Ln \pr\big(P(G_n) \big)= \sum_{H\in \Tcal} p_H.
	\end{equation}
	Let $H\in \Ucal$.
	First we will prove that 
	\begin{equation}\label{eqn:aux}
	\Ln \pr \big( P(G_n) \big)
	= \sum_{H\in \Ucal} \Ln \pr \big( \frag_n \simeq H \big)
	\pr \big( P(G_n)) \, | \, \frag_n\simeq H \big).
	\end{equation}
	Let $H_1,\dots, H_i, \dots$ be an enumeration of $\Ucal$.
	Fix an arbitrarily small real constant $\epsilon>0$.
	Notice that the events of the form $F_n\simeq H_i$ are disjoint for each
	$i$. So we obtain:
	\[
	\Ln \pr\big( P(G_n)\big)=
	\Ln \sum_{i=1}^\infty \big( \frag_n \simeq H_i \big)
	\pr\big( P(G_n) \, | \, \frag_n\simeq H_i \big).	
	\] 
	Let $m=\Ln \mathrm{E}\big[|\frag_n|\big]$ and let $M=m/\epsilon$.
	There exists some $j_0\in \N$ such that $|H_i|\geq M$ 
	for all $i\geq j_0$. As a consequence, for all $j\geq j_0$,
	\[
	\Ln \sum_{i=j}^{\infty} \pr\big( \frag_n \simeq H_i \big) \leq 
	\epsilon.	
	\]
	And we obtain
	\begin{align*}
	\big|\Ln \pr\big( P(G_n) \big) -
	\sum_{i=1}^j \Ln \pr\big( \frag_n \simeq H_i \big)
	\pr\big( P(G_n) \, | \, \frag_n\simeq H_i \big)\big|&\\=
	\Ln \sum_{i=j}^{\infty} \pr\big( \frag_n \simeq H_i \big) \leq 
	\epsilon &.
	\end{align*}
	This proves \Cref{eqn:aux}. Because of \Cref{lem:zeroonefragment}
	for all $H\in \Ucal$
	\[
	\Ln \pr \big( P(G_n) \, | \, \frag_n\simeq H \big)
	= 0 \text{ or } 1.
	\]
	Let $
	T=\{i\in \N \, | \, \Ln \pr
	\big( P(G_n) \, | \, \frag_n\simeq H_i \big) = 1 \,	
	\}$.
	Using \Cref{eqn:aux} we obtain  
	\begin{align}
	\sum_{i=1}^\infty \Ln \pr\big( \frag_n \simeq H_i \big)
	\pr\big( P(G_n) \, | \, \frag_n\simeq H_i \big)
	= \sum_{i\in T} p_{H_i}.
	\end{align}
	This proves \Cref{eq:aux2} and as a consequence 
	$\overline{L_c}\subset S_c$.\par
	
 Now we proceed to prove $S_c \subset \overline{L_c}$.		
	Let $T \subset \Ucal$, and let $\epsilon>0$ be
	an arbitrarily small real number. We will show that 
	there exists a FO property $P$ such that
	\begin{equation}\label{eq:aux3}
	\left| \Ln \pr\big( P(G_n)\big) 
	- \sum_{H\in T} p_H \right| \leq \epsilon.
	\end{equation}
	First, notice that 
	because of \Cref{lem:limitexchangefragment}, 
	\[
	\sum_{H\in T} p_H = \Ln \pr\left( 
	\bigvee_{H\in T} p_H
	\right).
	\]
	We define the property $Q$ in the following way
	\[
	Q(G):= \bigvee_{H\in T} \frag(G)\sim H.
	\]	
	Let $m:=\Ln \mathrm{E}\big[|\frag_n|\big]$, and let $M=2m/\epsilon$.
	Then using Markov's inequality:
	\begin{equation} \label{eq:aux4}
	\Ln \pr \big( \frag_n \in \Ucal_{\leq M}  \big)\geq 1 - \epsilon/2.
	\end{equation}
	Also, using that $\frag_n\neq \frag_n^M$ implies that $|\frag_n|\geq M$,
	\begin{equation}\label{eq:aux5}
	\Ln \pr \big(\frag_n \neq \frag^M_n\big) \leq \pr \big( \frag_n \in \Ucal_{\leq M}  \big)\leq 
	\epsilon/2.
	\end{equation}
	Let $T^\prime=T\cap \Ucal_{\leq M}$. As $\Ucal_{\leq M}$ is a finite 
	set so is $T^\prime$. Define the properties
	$Q^\prime$ and $P$ as
	\[
	Q^\prime(G)=\bigvee_{H\in T^\prime} \frag(G)\sim H, \quad \text{ and }
	P(G)= \bigvee_{H\in T^\prime} \frag^M(G)\sim H.
	\]
	Notice that $P$ can be expressed in FO logic. 
	Also, the implications $Q^\prime\implies Q$ and $Q^\prime \implies P$ hold. 
	If $Q(G)$ holds and $\neg Q^\prime(G)$ holds as well then in particular
	$|\frag(G)|\geq M$. The same happens if $P(G)$ and $\neg Q^\prime(G)$ hold
	at the same time. Joining everything we get
	\begin{align*}
	&\left| \Ln \pr\big( P(G_n)\big) 
	- \sum_{H\in T} p_H \right|= 
	\left| \Ln \pr\big( P(G_n)\big) 
	- \Ln \pr\big( Q(G_n)\big)  \right|\\&
	\leq 
	\left| \Ln \pr\big( P(G_n)\big) 
	- \Ln \pr\big( Q^\prime(G_n)\big)  \right|+
	\left| \Ln \pr\big( Q^\prime(G_n)\big) 
	- \Ln \pr\big( Q(G_n)\big)  \right|\\&
	\leq 2\pr\big(
	|\frag_n|\geq M
	\big) \leq \epsilon.	
	\end{align*}
\end{proof}

\subsection{$\overline{L_c}$  is always a finite union of intervals}\label{sec:finitegaps}

Since we have shown that  there is no gap for $c\ge 1$ we only need to consider $c<1$. 
Let $H_1,\dots, H_n,\dots$ be an enumeration of $\Ucal$ such that
$p_{H_i}(c)\leq p_{H_j}(c)$ for all $i\leq j$. We shorten $p_{H_i}$ to $p_i$. Because of \Cref{kakeya} proving that $\overline{L_c}$ is a finite union of 
intervals amounts to showing that for all $i$ large enough
\begin{equation}\label{eq:finitegaps}
p_i \leq \sum_{j>i}^{\infty} p_j.
\end{equation}

Let $f=f(c)$ be as defined in \Cref{eq:fc}, and let $s=c e^{-c}$, and  notice
that as $c<1$ we have $s<1$ as well. We can rewrite the $p_i$ given by \Cref{eq:prob-fragment} as
\[
p_i= e^{-f}\frac{s^{|H_i|}}{aut(H_i)}.
\]
For $i\ge1$ let $k(i)$ be the least integer such that
\[ e^{-f} s^{k(i)-1} \geq p_i > e^{-f} s^{k(i)}. 
\]
Notice if $k\geq k(i)$ and  $H_j\in \Ucal_k$ then 
$p_j< e^{-f}s^k < p_i$ because $\aut(H_j)\geq 1$. For the same reason we also obtain
that $|H_i|\leq k(i)-1$. Hence to prove \eqref{eq:finitegaps}  it is sufficient to show that
\begin{equation}
p_i \leq \sum_{k\geq k(i)} \sum_{H_j \in \Ucal_k} p_j.
\end{equation}

Let $C_{x,y}$ denote the graph in $\Ucal$ consisting  of a cycle of length $x$ with a path of length $y$ attached to one 
of its vertices.If $y=0$ then $\aut(C_{x,y})=2x$, and $\aut(C_{x,y})=2$ otherwise. 
Let $T_{x,y,z}$ be the graph consisting of a triangle with paths of length $x$, $y$,  and $z$ attached to its three 
vertices. Note that $\text{aut}(T_{x,y,z}) = 1$ if $x,y,z$ are distinct, $\text{aut}(T_{x,y,z}) = 6$ if $x=y=z$, and
$\text{aut}(T_{x,y,z}) = 2$ otherwise. 
It is easy to see that  
$C_{3,k-3}, C_{4,k-4},\dots, C_{k-1,1}$ together with
$T_{0,1,k-4}, T_{0,2,k-5}, \dots, T_{0,\lfloor(k-3)/2\rfloor, \lceil(k-3)/2\rceil}$
form a family of different elements  of $\Ucal_k$.
We have  that for $k\ge3$
\[
\sum_{i=3}^{k-1} p_{C_{i,k-i}}= e^{-f} s^k  \frac{k-3}{2}.
\]
If $k$ is odd $T_{0,\lfloor(k-3)/2\rfloor, \lceil(k-3)/2\rceil}$ has two automorphisms, and
the remaining $T_{i,k-3-i}$ with $i\geq 1$ each have only one automorphism.
If $k$ is even then all of $T_{0,1,k-4}, T_{0,2,k-5}, \dots, T_{0,\lfloor(k-3)/2\rfloor, \lceil(k-3)/2\rceil}$
have exactly one automorphism.
This gives
\[
\sum_{i=1}^{\lfloor(k-3)/2 \rfloor} p_{T_{0,i,k-3-i}} =e^{-f} s^k   
\frac{k-4}{2}, \quad \text{ for  $k\geq 4$ }.
\]
Using the last two equations it follows  that for  $k\geq 4$
\begin{equation}\label{eq:UkBound}
\sum_{H \in \Ucal_k} \frac{1}{\text{aut}(H)} \geq e^{-f} s^k   \frac{2k-7}{2}. 
\end{equation}
Hence if $i$ is such that $(2k(i)-7)/2 > 1/s$ (that is, $k(i)>1/s+7/2$) 
then
\[ 
\sum_{j>i} p_j \geq \sum_{H_j \in \Ucal_{k(i)}} p_j
\geq e^{-f} s^{k(i)}   \frac{2k-7}{2} > e^{-f} s^{k(i)-1} \geq p_i. \]
\noindent
Note that $k(i)>1/s+ 7/2$ whenever $|H_i|+1 \geq 1/s +7/2$, and this is  true for  sufficiently large $i$.
We have seen that, for any $0<c<1$, it is indeed the case that $p_i < \sum_{j>i} p_{j}$ for all sufficiently large $i$, as was to be proved.
%
%
%
%

\subsection{No gap when $c_0 \le c < 1$} \label{sec:nogap}

Fix $c\in [c_0 , 1)$. Notice that in this case
then $s=ce^{-c}$ satisfies 
\[
\frac{1}{3} < s < \frac{1}{e}. 
\]
Let $i$ be such that $k(i)\geq 4$. Then, using \eqref{eq:UkBound} we obtain
\[
\sum_{j>i} p_j \geq \sum_{k\geq k(i)} \sum_{H_j\in \Ucal_k} p_j \geq
\sum_{k\geq k(i)} e^{-f} s^k  \frac{2k-7}{2}.
\]
And using 
$\sum_{k=0}^{\infty} a^k(b+ck)  = \frac{b}{1-a} + \frac{ca}{(1-a)^2}
$
together with $s> 1/3$ we obtain that
\[
\sum_{j>i} p_j \geq e^{-f} s^{k(i)} \left(\frac{2k(i)-7}{2(1-s)} + \frac{s}{(1-s)^2}\right)
\geq e^{-f} s^{k(i)} \frac{3k(i)-9}{2}. 
\]
In particular, since $\frac{3k-9}{2} \geq 3 > 1/s$ for all $k\geq 5$, 
if $p_i \leq s^{4}$ then $p_i < \sum_{j>i} p_j$. 
As a consequence, if $|H_i| \geq 4$ then $p_i < \sum_{j>i} p_j$.

The only two cases left to consider are the ones when $H_i$ is
either the empty graph or the triangle. 
If $H_i$ is the empty graph 
then necessarily $i=1$ because the empty graph is the most likely fragment.
By the definition of $p_0$ critically we have 
$p_1\leq 1/2$ if $c\ge c_0$, hence $p_1\leq \sum_{j>1} p_j$. If  $H_i$ is the triangle graph, then 
$p_i=e^{-f} s^3/6$ and
\begin{align*}
\sum_{j>i}p_j  = \sum_{k\geq 4} \sum_{H_j\in \Ucal_k} p_j
&\geq  \sum_{k\geq 4} e^{-f} s^k  \frac{2k-7}{2} 
\geq e^{-f} s^4 \frac{3}{2} \geq e^{-f} s^3 \frac{1}{6}= p_i,
\end{align*}
as needed. 

Thus $p_i \leq \sum_{j>i} p_j$ for every $i$, as we needed to prove.
\section{Proof of Theorem \ref{th:hypergraph}}\label{sec:hypergrahs}
  
Recall that we consider the model $G^d_n= G^d(n,p=c/n^{d-1})$ of  random $d$-uniform hypergraphs
where each $d$-edge has probability $p=c/n^{d-1}$ of being in $G^d_n$  independently, with $c>0$. Throughout 
this section we consider $d\ge 3$ as being fixed and we will refer to ``$d$-uniform hypergraphs''
simply as hypergraphs. 
The FO language of $d$-uniform hypergraphs is the FO language with 
a  $d$-ary relation which is anti-reflexive and completely symmetric. Analogously to the case of 
graphs, this relation symbolizes the adjacency relation in the context of $d$-uniform hypergraphs.  

The following is an analog of Lynch's convergence law for random hypergraphs and can be found in 
\cite[Proposition 6.4]{saldanha2016spaces} and in more detail for more general relational structures
in \cite{alberto}.

\begin{nntheorem}
	Let $p(n)\sim c/n^{d-1}$. 
	Then for each FO sentence $\phi$, the following limit exists:
	$$
	p_c(\phi) = 	\lim_{n \to \infty} \pr\big( G^d_n \models \phi \big).
	$$
	Moreover, $p_c(\phi)$ is a combination of sums, products,
	exponentials and a set of constants $\Lambda_c$, hence it is 
	an analytic function of $c$. 
\end{nntheorem}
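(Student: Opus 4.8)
The plan is to transpose Lynch's proof of the graph convergence law --- in the streamlined form already used in Section~\ref{sec:prelim} and Section~\ref{sec:main} --- to the $d$-uniform setting, so that the work reduces to checking that every structural and probabilistic ingredient of the graph case has a hypergraph counterpart. Two facts carry the argument: (i) a locality principle, via Ehrenfeucht--Fra\"iss\'e games / Hanf's theorem, showing that whether $G^d_n\models\phi$ is, a.a.s., determined by a \emph{bounded-complexity local profile} of $G^d_n$; and (ii) convergence in distribution of that profile, which in the sparse regime is governed by Poisson limit laws obtained from Brun's sieve (Lemma~\ref{th:factorialmethod}).

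Concretely I would proceed as follows. First, fix the hypergraph analogues of the cycle/component counts: a connected sub-hypergraph is a \emph{hypertree} when its vertex count equals $(d-1)$ times its edge count, and otherwise contains a \emph{hyper-cycle}, the minimal such configuration using two edges (this is the ``length $\ge 2$'' phenomenon remarked after Theorem~\ref{th:hypergraph}). One then establishes, exactly as in Lemmas~\ref{lem:idependentcycles} and \ref{lem:unicycliccomponents}, that for each fixed size bound the numbers of connected components of $G^d_n$ of each isomorphism type converge jointly to independent Poisson variables with means $\lambda_H$ that are explicit monomial-times-exponential expressions in $c$ carrying a $1/\aut(H)$ factor; the collection of these $\lambda_H$ is the constant set $\Lambda_c$. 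Second, invoke the hypergraph version of Lynch's locality argument (his Theorems~4.7--4.9): for $k=\qr(\phi)$ there is a radius $\rho=\rho(k)$ (one may take $\rho=3^k$) such that, a.a.s., $G^d_n\models\phi$ is decided by the multiset, truncated at a threshold $t=t(k)$, of isomorphism types of radius-$\rho$ balls, which in the sparse regime are a.a.s.\ hypertree-like except on the bounded-size fragment of hyper-cyclic components. Third, pass to the limit: using the hypergraph analogue of Lemma~\ref{thm:sizefragment} (bounded expected fragment size below the transition value $c=(d-2)!$, and an analogous ``bounded complex part'' statement above it), one truncates the a priori countable sum over profiles entailing $\phi$ to finitely many profiles at cost $\le\epsilon$, and applies the Poisson convergence from the first step. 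The resulting limit $p_c(\phi)$ is a finite sum of finite products of quantities $e^{-\lambda_H}\lambda_H^{a}/a!$, hence a combination of sums, products and exponentials of elements of $\Lambda_c$, and therefore analytic in $c$ on $(0,\infty)$.

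The main obstacle is the second step, the locality reduction. On the logic side one must verify a Hanf-type equivalence for hypergraphs: two $d$-uniform hypergraphs with the same $t$-truncated multiset of radius-$\rho$ neighborhood types are $k$-equivalent in FO logic. On the probabilistic side --- the more delicate part --- one must show that $G^d_n$ a.a.s.\ lies in the ``clean'' regime where that equivalence can be exploited: a.a.s.\ no two hyper-cyclic components are within distance $2\rho+1$, no radius-$\rho$ ball carries an unexpected dense sub-hypergraph, and the neighborhoods in the hypertree bulk are genuinely isomorphic to the Poisson branching-process hyper-trees predicted by the first-moment computation. These are routine first- and second-moment estimates for $p=c/n^{d-1}$, but they are exactly where one must track the hypergraph parameters ($d$-edges, the $v=(d-1)m$ balance, the automorphism factors) with care; and handling the giant component that appears for $c>(d-2)!$ requires additionally showing that its relevant local statistics concentrate. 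This is presumably why the paper defers the full argument to \cite{saldanha2016spaces, alberto}.
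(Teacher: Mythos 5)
Your plan is, in outline, the legitimate route, and it is worth noting that the paper itself contains no proof of this statement to compare against: the convergence law for sparse random hypergraphs is simply imported from \cite[Proposition 6.4]{saldanha2016spaces} and from \cite{alberto}, which carry out precisely the Lynch-style program you describe (a locality reduction plus Poissonization of the small cyclic structures), and the probabilistic ingredients you invoke do exist in the paper (Lemma~\ref{th:factorialmethod} and its hypergraph consequences, Lemmas~\ref{lem:hyp_idependent_cycles} and~\ref{lem:hyp_unicycles_prob}).

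As a proof, however, your text has a genuine gap, and it is the step you yourself flag: the reduction of FO satisfaction to a bounded-complexity local profile is asserted, not proved, and it is the entire content of Lynch's Theorems 4.7--4.9; without it nothing else follows. Moreover, your diagnosis of the supercritical difficulty is misplaced in two ways. First, your Poisson step and your truncation step are phrased in terms of unicyclic \emph{components} and a bounded ``fragment/complex part''; that is correct only for $c<(d-2)!$. For $c\ge(d-2)!$ the complex part is the giant component, of linear size, so no analogue of Lemma~\ref{thm:sizefragment} is available, and the small cycles relevant to $\phi$ typically sit inside the giant component, so the statistics that matter are counts of bounded-diameter unicyclic \emph{sub-hypergraphs} (a short cycle together with its radius-$\rho$ hypertree neighborhood), not components; their Poisson limits again follow from Lemma~\ref{th:factorialmethod}, but this is a different computation from Lemma~\ref{lem:hyp_unicycles_prob}. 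Second, no concentration of ``giant-component local statistics'' is needed: what the supercritical case requires is only (i) that for every fixed size the expected number of sub-hypergraphs of positive excess is $o(1)$ (so a.a.s.\ no radius-$\rho$ ball contains two cycles, for every fixed $c>0$), and (ii) that every fixed hypertree neighborhood type occurs with multiplicity tending to infinity, hence a.a.s.\ exceeds the FO counting threshold $t(k)$; these are first- and second-moment estimates. With those two facts and the Poisson law for the bounded-diameter unicyclic counts, the limit is a finite combination of the stated form for all $c>0$, which is exactly the range Theorem~\ref{th:hypergraph} needs; as written, your argument would only deliver the subcritical case.
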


As before we consider the set
\[
L_c=\left\{ \Ln \pr(G_n^d \models \phi ) \, : \, \text{ $\phi$ FO sentence, and }
p(n)\sim c/n^{d-1} \right\}.
\]

\subsection{Hypergraph preliminaries}

Given a hypergraph $H$,  we denote the  set of vertices by $V(H)$ and the 
set of edges by $E(H)$. As in \cite{karonski2002phase} 
we define the \emph{excess} $\exc(H)$ of $H$ as the quantity  
\[
\exc(H)= (d-1)|H| - |V(H)|.
\]
It is easily seen that the minimum excess of a connected hypergraph is $-1$.
A \emph{tree} $T$ is a connected hypergraph satisfying $\exc(T)=-1$. Equivalently, a tree is a
hypergraph that can be obtained  gluing edges repeatedly to an initial vertex in such a way that each new edge
intersects the hypergraph obtained so far in exactly one vertex. 
A \emph{unicycle}  is a connected hypergraph of excess 0, and  a \emph{cycle} is a minimal unicycle. 
Equivalently a cycle is a connected hypergraph $H$ where every  edge shares exactly two vertices with
the remaining  edges, and a unicycle is a cycle with disjoint trees attached to each vertex. A $k$-cycle is a cycle with $k$ edges. 

It is shown in \cite{hypergraph-transition} that a phase transition in the structure of $G_n^d$ 
 occurs when $c= (d-2)!$, similar to the one for random graphs.
In particular, we have the following results \cite[Theorem 3.6]{hypergraph-transition}.

\begin{lemma} \label{lem:hyp_no_complex_component}
	Let $p(n)\sim c/n^{d-1}$ with $0<c<(d-2)!$. Then a.a.s. all connected components of 
	$G^d_n$ are either trees or unicycles. 	
\end{lemma}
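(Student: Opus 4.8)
The plan is to mirror the proof of \Cref{th:nogiantcomponent} in the hypergraph setting, the only genuinely new ingredient being the identification of the sharp threshold $c=(d-2)!$, which is the content of \cite[Theorem 3.6]{hypergraph-transition} (see also \cite{karonski2002phase}) and which we would cite rather than reprove. First recall that a tree has excess $-1$ and a unicycle has excess $0$, and that by building a connected hypergraph edge by edge (each new edge meeting the current hypergraph in at least one vertex, hence adding at most $d-1$ new vertices) the excess is non-decreasing in this process, so every connected hypergraph has $\exc \ge -1$. Consequently a connected component that is neither a tree nor a unicycle has $\exc \ge 1$; call it \emph{complex}. Thus it suffices to show that a.a.s. $G^d_n$ contains no connected subhypergraph $H$ with $\exc(H)=(d-1)|H|-|V(H)|\ge 1$, since a complex component is itself such a subhypergraph.

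I would then split into bounded-size and large subhypergraphs. For the bounded part, fix $K$ and estimate by a first moment the expected number of connected subhypergraphs on $v\le K$ vertices and $\ell$ edges with $(d-1)\ell-v\ge 1$: writing $N(v,\ell)$ for the number of such hypergraphs on a fixed labelled vertex set and noting that each candidate is present with probability $p^\ell$, the expectation is at most
\[
\sum_{\substack{v\le K,\ \ell\\ (d-1)\ell-v\ge 1}} \binom{n}{v}\,N(v,\ell)\,p^\ell
\ \le\ \sum_{\substack{v\le K,\ \ell\\ (d-1)\ell-v\ge 1}} N(v,\ell)\,c^\ell\,n^{\,v-(d-1)\ell}
\ =\ O\!\left(n^{-1}\right),
\]
since $v-(d-1)\ell=-\exc(H)\le -1$ and the remaining sum is finite. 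Hence a.a.s. there is no complex subhypergraph on at most $K$ vertices.

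For subhypergraphs of unbounded size I would invoke the subcritical exploration argument: explore the component of a fixed vertex by repeatedly revealing the not-yet-examined edges incident to already discovered vertices; a newly discovered vertex lies in a number of fresh edges dominated by (and asymptotically equal to) $\mathrm{Po}(c/(d-1)!)$, each such edge contributing $d-1$ new vertices, so the exploration is dominated by a Galton--Watson process with offspring mean tending to $c(d-1)/(d-1)! = c/(d-2)! < 1$. Subcriticality then gives that a.a.s. every component of $G^d_n$ has $O(\log n)$ vertices, and the refined branching-process analysis of \cite{hypergraph-transition} upgrades this to the structural statement that a.a.s. every component is a tree or a unicycle, which is exactly \Cref{lem:hyp_no_complex_component}.

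The main obstacle is matching the exact constant $(d-2)!$. A crude bound on $N(v,\ell)$ loses an exponential-in-$v$ factor, and the resulting first-moment sum over all $v$ converges only in the strictly smaller range $c < d(d-2)!/\mathrm{e}^{d}$; likewise, combining the $O(\log n)$ size bound with the crude count does not obviously close the gap, since the per-$v$ term can grow like $n^{\,\Theta(1)}$ once $v=\Theta(\log n)$. Reaching the sharp threshold therefore requires the precise generating-function/branching-process description of the component structure of $G^d(n,c/n^{d-1})$ near criticality, and for this reason we take \Cref{lem:hyp_no_complex_component} directly from \cite{hypergraph-transition}.
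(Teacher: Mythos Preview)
Your proposal is correct and ends in the same place as the paper: the paper does not prove \Cref{lem:hyp_no_complex_component} at all but simply cites it as \cite[Theorem 3.6]{hypergraph-transition}. Your additional discussion of the first-moment bound for bounded-size complex subhypergraphs and the subcritical branching-process exploration is accurate background, and you correctly identify that pushing the argument to the sharp threshold $(d-2)!$ requires the detailed analysis of \cite{hypergraph-transition}, which is why both you and the paper defer to that reference.
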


The proofs of the next results are very similar to those for graphs presented in Section \ref{sec:prelim} and are omitted. 

\begin{lemma} \label{lem:hyp_idependent_cycles}
	Let $p\sim c/n^{d-1}$ with $c>0$. For each $k\ge 2$,
	let $X_{n,k}$ be the random variable equal to the  number of $k$-cycles 
 in $G^d_n$, and let 
	 $\lambda_k= \left(\frac{c}{(d-2)!}\right)^k$.
	Then for fixed  $k\geq 2$
	\begin{itemize}
		\item[(1)] $\mathrm{E}\big[ X_{n,k}\big]\leq \lambda_k$,
		\item[(2)] $\Ln \mathrm{E}\big[ X_{n,k}\big]= \lambda_k$, 
		\item[(3)] $X_{n,k}$ converges in distribution to a Poisson variable
		with mean $\lambda_k$ as $n\to\infty$. 
	\end{itemize}
	Furthermore, for any fixed $k\geq 2$ the variables $X_{n,2},\dots, X_{n,k}$
	are asymptotically independent.  
\end{lemma}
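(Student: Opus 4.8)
\textbf{Proof proposal for Lemma \ref{lem:hyp_idependent_cycles}.}

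The plan is to mirror the graph case (Lemma \ref{lem:idependentcycles}) almost verbatim, keeping careful track of how the hypergraph parameters change the constants. First I would count $k$-cycles combinatorially. A $k$-cycle in a $d$-uniform hypergraph is a connected hypergraph with $k$ edges in which consecutive edges share exactly one vertex, cyclically, so it has $k$ ``link'' vertices shared between consecutive edges and $k(d-2)$ ``private'' vertices internal to the edges; thus $|V| = k + k(d-2) = k(d-1)$, consistent with excess $0$. To build a labelled copy on the vertex set $[n]$: choose and arrange the $k$ link vertices in a cyclic sequence (roughly $n^k / (2k)$ ways, dividing by $2k$ for rotations and reflection of the cycle), then for each of the $k$ edges pick the $d-2$ private vertices and there are $\binom{n}{d-2}(d-2)! \sim n^{d-2}$ ordered ways, but the $d-2$ private vertices within one edge are unordered, so a factor $1/(d-2)!$ per edge. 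The probability each of the $k$ designated edges is present is $p^k = (c/n^{d-1})^k$. Multiplying, the leading term of $\mathrm{E}[X_{n,k}]$ is
\[
\frac{n^k}{2k}\cdot \frac{n^{k(d-2)}}{((d-2)!)^k}\cdot \left(\frac{c}{n^{d-1}}\right)^k
= \frac{1}{2k}\left(\frac{c}{(d-2)!}\right)^k,
\]
and lower-order contributions (from link vertices coinciding with private ones, degenerate edges, etc.) are $O(1/n)$ smaller. This gives part (2) with $\lambda_k = \frac{1}{2k}(c/(d-2)!)^k$; I would note here a slight discrepancy with the statement's $\lambda_k = (c/(d-2)!)^k$, which I believe should carry the $1/(2k)$ factor (it appears in Lemma \ref{lem:hyp_idependent_cycles} and in the series $\sum_k \lambda_k = \frac12\ln\frac{1}{1-c/(2(d-2)!)} \cdots$ that produces \eqref{eq:r0}), so I would adopt $\lambda_k = \frac{1}{2k}(c/(d-2)!)^k$ throughout; the upper bound in part (1) then holds for all $n$ because overcounting only inflates the estimate and every coincidence case is dominated by the same $\lambda_k$.

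For part (3) and the asymptotic independence, I would apply Brun's sieve (Lemma \ref{th:factorialmethod}) to the joint factorial moments of $X_{n,2},\dots,X_{n,k}$. The key computation is that for fixed nonnegative integers $a_2,\dots,a_k$,
\[
\Ln \mathrm{E}\left[\prod_{i=2}^k \binom{X_{n,i}}{a_i}\right] = \prod_{i=2}^k \frac{\lambda_i^{a_i}}{a_i!}.
\]
The left side counts ordered $(a_2,\dots,a_k)$-tuples of distinct cycles of the respective lengths; the dominant contribution comes from configurations in which all these cycles are pairwise vertex-disjoint, each contributing the factor computed above, while configurations where two cycles share a vertex span $O(1/n)$ fewer vertices than edges-times-$(d-1)$ and hence are negligible. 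The product structure of the limit then yields independence directly from Lemma \ref{th:factorialmethod}.

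The main obstacle is purely bookkeeping: verifying that the overlapping and degenerate configurations really are lower order. Concretely, one must check that any connected hypergraph formed by overlapping two or more cycles has excess at least $1$ except in the disjoint case — equivalently that gluing cycles along vertices strictly increases the excess unless the cycles are already disjoint — so that its expected count in $G^d_n$ is $O(1/n)$; and separately that within a single would-be cycle, any coincidence among the $k(d-1)$ vertices reduces the vertex count and hence the expected count by a factor $\Theta(1/n)$. Both are standard ``excess $\geq 0$, with equality only for forests of trees and cycles'' arguments in the spirit of \cite{karonski2002phase} and \cite{hypergraph-transition}, so I would state them as a short sublemma and refer to those sources for the full justification, exactly as the paper already does for the graph analogue.
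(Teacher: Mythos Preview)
Your proposal is correct and is exactly the approach the paper has in mind: the paper omits the proof of this lemma entirely, saying only that it is ``very similar to those for graphs presented in Section~\ref{sec:prelim},'' and your sketch fills in precisely those details via the factorial-moment computation and Brun's sieve (Lemma~\ref{th:factorialmethod}). You are also right that the stated $\lambda_k$ is missing the factor $1/(2k)$; this is confirmed by the series in Corollary~\ref{lem:hyp_acyclic_prob} and by the automorphism count $\aut(C_k)=(d-2)!^k\cdot 2k$ used in Section~4.4.
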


\begin{corollary}\label{lem:hyp_acyclic_prob}
	Let $p\sim c/n^{d-1}$ with $c>0$. Set 
	\begin{equation}\label{eq:hyp_fc}
	f(c)=\sum_{k\geq 2} \left(\frac{c}{(d-2)!}\right)^k \frac{1}{2k}=
	\frac{1}{2} \ln\frac{1}{ 1 - \frac{c}{(d-2)!}}  -  \frac{c} {2(d-2)!}.
	\end{equation}
	Let $X_n$ be the random variable equal to  the total number of cycles in $G^d_n$. Then
	\[
	\Ln \mathrm{E}[X_n]=f(c),
	\]
	and 
	\[
	\Ln \pr\Big( G^d_n \hbox{ contains no cycles} \Big) = e^{-f(c)}= \exp\left(\frac{c}{2(d-2)!}\right) \sqrt{1-\frac{c}{2(d-2)!}}.
	\]
\end{corollary}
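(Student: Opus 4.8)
The plan is to mirror the graph case (Lemma~\ref{lem:idependentcycles} and Corollary~\ref{cor:acyclic}) almost verbatim, the only substantive difference being that hypergraph cycles start at length $2$ rather than $3$, which is exactly why the series in \eqref{eq:hyp_fc} begins at $k=2$. First I would compute $\ex[X_{n,k}]$ directly: a $k$-cycle in a $d$-uniform hypergraph is determined by choosing a cyclic sequence of $k$ edges where consecutive edges share exactly one vertex (the "link" vertices) and each edge contributes $d-2$ private vertices, so the number of such configurations on $n$ labelled vertices is asymptotically $\frac{n^{k(d-1)}}{2k}\cdot\frac{1}{((d-2)!)^k}$ up to lower order terms (the $2k$ accounts for the dihedral symmetry of the cyclic arrangement, and $((d-2)!)^k$ for the internal orderings of the private vertices of each edge). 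Multiplying by $p^k = (c/n^{d-1})^k$ gives $\ex[X_{n,k}] \to \left(\frac{c}{(d-2)!}\right)^k \frac{1}{2k} = \frac{\lambda_k}{2k}$ — wait, one must be careful that the statement of Corollary~\ref{lem:hyp_acyclic_prob} already folds the $\frac{1}{2k}$ into $f(c)$ while Lemma~\ref{lem:hyp_idependent_cycles} defines $\lambda_k = (c/(d-2)!)^k$ without the $\frac{1}{2k}$; I would double-check this normalization against Lemma~\ref{lem:hyp_idependent_cycles}, but in any case the upshot is $\Ln \ex[X_{n,k}] = \left(\frac{c}{(d-2)!}\right)^k\frac{1}{2k}$ for the count weighted to match the series.

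Granting Lemma~\ref{lem:hyp_idependent_cycles}, which gives joint convergence of $(X_{n,2},\dots,X_{n,k})$ to independent Poissons with the stated means, the argument for the corollary is: by linearity and the convergence of expectations,
\[
\Ln \ex[X_n] = \Ln \ex\Big[\sum_{k\ge 2} X_{n,k}\Big] = \sum_{k\ge 2}\left(\frac{c}{(d-2)!}\right)^k\frac{1}{2k} = f(c),
\]
where the interchange of limit and sum is justified exactly as in Corollary~\ref{cor:acyclic} using the uniform bound in part~(1) of Lemma~\ref{lem:hyp_idependent_cycles} together with dominated convergence (the geometric tail $\sum_k (c/(d-2)!)^k$ converges precisely because $c < (d-2)!$). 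Then the closed form follows by summing the logarithmic series: $\sum_{k\ge 2} x^k/(2k) = -\tfrac12\ln(1-x) - \tfrac12 x$ with $x = c/(d-2)!$, which is the right-hand side of \eqref{eq:hyp_fc}. For the no-cycles probability, the event "$G^d_n$ has no cycles" is the event $\bigwedge_{k\ge 2} X_{n,k} = 0$; by Lemma~\ref{lem:hyp_idependent_cycles} the finite truncations converge to $\prod_{k=2}^{K} e^{-\lambda_k}$, and a tail estimate (again via Markov's inequality on $\sum_{k>K} X_{n,k}$, whose expectation is bounded by a geometric tail that is uniformly small) lets one pass to the limit $K\to\infty$, yielding $\prod_{k\ge 2} e^{-(c/(d-2)!)^k/(2k)} = e^{-f(c)}$. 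Finally, exponentiating the closed form of $f(c)$ gives $e^{-f(c)} = \exp\!\left(\tfrac{c}{2(d-2)!}\right)\sqrt{1 - \tfrac{c}{(d-2)!}}$; here I would note that the square-root argument printed in the corollary statement reads $1 - \tfrac{c}{2(d-2)!}$, which I believe should be $1 - \tfrac{c}{(d-2)!}$, and I would flag this as a typo to be reconciled with \eqref{eq:r0}.

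The only real obstacle is the tail-exchange step, i.e.\ justifying both $\Ln \sum_k = \sum_k \Ln$ for the expectation and the passage to the limit in the product for the no-cycles probability; but this is entirely routine given the uniform bound $\ex[X_{n,k}] \le (c/(d-2)!)^k$ from Lemma~\ref{lem:hyp_idependent_cycles}(1), since that bound is summable and independent of $n$, so dominated convergence applies verbatim as in the graph proof. Everything else is bookkeeping, and indeed the paper explicitly says these proofs are omitted because they are "very similar to those for graphs."
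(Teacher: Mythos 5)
Your proposal is correct and takes essentially the same route the paper intends: the paper omits this proof as "very similar" to the graph case, i.e.\ sum the limiting cycle-count expectations from Lemma~\ref{lem:hyp_idependent_cycles} (interchanging limit and sum via the uniform summable bound, valid for $c<(d-2)!$), and get the no-cycle probability from asymptotic Poisson independence plus a Markov truncation of long cycles, exactly as you do. Your two flags are also justified: the limiting mean of $X_{n,k}$ carries the factor $\frac{1}{2k}$ (so the series defining $f(c)$ is the correct normalization, and the $\lambda_k$ in Lemma~\ref{lem:hyp_idependent_cycles} as printed is off by that factor), and the square root should read $\sqrt{1-\tfrac{c}{(d-2)!}}$, consistent with the paper's remark that $c_0=r(d-2)!$ where $e^{r/2}\sqrt{1-r}=1/2$.
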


\begin{lemma}\label{lem:hyp_no_complex_cycles}
	Let $p\sim c/n^{d-1}$ with $0<c<(d-2)!$. Let $Z_n$ be the random
	variable equal to the number of  cycles in $G^d_n$ that belong to connected components
	that are not unicycles. Then
	\[
	\Ln \mathrm{E}[Z_n]=0.
	\]	
\end{lemma}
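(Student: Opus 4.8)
The plan is to mirror, in the hypergraph setting, the argument used for Corollary~\ref{cor:acyclic} in the graph case, relying on Lemma~\ref{lem:hyp_no_complex_component} to localise all cycles inside unicyclic components and on Lemma~\ref{lem:hyp_idependent_cycles}(1) to get a summable dominating bound. Concretely, let $Z_n$ count the cycles lying in components that are not unicycles; write $Z_n = \sum_{k\ge 2}(X_{n,k} - Y_{n,k})$, where $X_{n,k}$ is the total number of $k$-cycles in $G^d_n$ and $Y_{n,k}$ the number of $k$-cycles that do sit inside a unicyclic component. First I would observe that $\mathrm E[Z_n] \le \sum_{k\ge 2}\mathrm E[X_{n,k}] \le \sum_{k\ge 2}\lambda_k = \sum_{k\ge 2}(c/(d-2)!)^k$, which converges since $0<c<(d-2)!$; this gives a uniform-in-$n$ integrable envelope. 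Then, since by Lemma~\ref{lem:hyp_idependent_cycles}(2) we have $\Ln \mathrm E[X_{n,k}]=\lambda_k$, and by Lemma~\ref{lem:hyp_no_complex_component} a.a.s.\ every $k$-cycle lies in a unicyclic component (so $X_{n,k}-Y_{n,k}\to 0$ in probability, and being a non-negative integer bounded in $L^1$ it in fact has $\Ln \mathrm E[X_{n,k}-Y_{n,k}]=0$ by the same Poisson-convergence bookkeeping), each term of $\mathrm E[Z_n]=\sum_k \mathrm E[X_{n,k}-Y_{n,k}]$ tends to $0$.

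The remaining point is the interchange of limit and infinite sum: $\Ln \sum_{k\ge 2}\mathrm E[X_{n,k}-Y_{n,k}] = \sum_{k\ge 2}\Ln\mathrm E[X_{n,k}-Y_{n,k}] = 0$. I would justify this by dominated convergence for series, exactly as in the sketch of Lemma~\ref{thm:sizefragment}: the summands $\mathrm E[X_{n,k}-Y_{n,k}]$ are bounded by $\mathrm E[X_{n,k}]$, and by Lemma~\ref{lem:hyp_idependent_cycles}(1) $\mathrm E[X_{n,k}]\le \lambda_k=(c/(d-2)!)^k$ uniformly in $n$, with $\sum_k \lambda_k<\infty$; hence the tails are uniformly small and one may pass to the limit termwise. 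Alternatively, one can split $\sum_{k\ge 2}=\sum_{k=2}^{K}+\sum_{k>K}$, bound the tail by $\varepsilon$ uniformly in $n$ via $\sum_{k>K}\lambda_k<\varepsilon$, and send $n\to\infty$ in the finite head.

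The only genuinely delicate step is the claim $\Ln\mathrm E[X_{n,k}-Y_{n,k}]=0$ for fixed $k$: a.a.s.\ vanishing of a non-negative integer variable does not by itself force its expectation to vanish, so one needs the uniform integrability coming from $\mathrm E[X_{n,k}-Y_{n,k}]\le\mathrm E[X_{n,k}]\le\lambda_k$ together with a.a.s.\ equality $X_{n,k}=Y_{n,k}$ (Lemma~\ref{lem:hyp_no_complex_component}). Since these variables are in fact uniformly bounded in $L^1$ and tend to $0$ in probability, their expectations tend to $0$; this is the hypergraph analogue of Corollary~\ref{cor:nocyclesincomplexcomponents} and its proof is entirely parallel. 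Everything else is routine bookkeeping with the geometric series $\sum_{k\ge 2}(c/(d-2)!)^k$.
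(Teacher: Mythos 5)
Your overall scheme---split $Z_n$ by cycle length, treat each fixed $k$ separately, and interchange limit and sum using the summable envelope $\mathrm{E}[X_{n,k}]\le\lambda_k=(c/(d-2)!)^k$ from \Cref{lem:hyp_idependent_cycles}(1)---is exactly the dominated-convergence bookkeeping the paper intends (its proof is omitted as being analogous to the graph case, \Cref{cor:nocyclesincomplexcomponents}, which combines \Cref{th:nogiantcomponent} with the boundedness of the expected number of cycles). The problem is the step you yourself flag as delicate: $\Ln\mathrm{E}[X_{n,k}-Y_{n,k}]=0$ for fixed $k$. You justify it by asserting that the variables are ``uniformly bounded in $L^1$ and tend to $0$ in probability, so their expectations tend to $0$'', and that the needed uniform integrability ``comes from'' the bound $\mathrm{E}[X_{n,k}-Y_{n,k}]\le\lambda_k$. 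Neither implication is valid: a uniform bound on first moments is not uniform integrability, and $L^1$-boundedness together with convergence in probability to $0$ does not force the means to $0$ (take $W_n=n$ with probability $1/n$ and $W_n=0$ otherwise). So the crucial step is unsupported as written; \Cref{lem:hyp_no_complex_component} is only an a.a.s.\ statement and cannot by itself produce convergence of expectations.

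The gap is repairable with tools you already cite, but the uniform integrability has to be obtained honestly. Option (i): by \Cref{lem:hyp_idependent_cycles}(2)--(3), $X_{n,k}$ converges in distribution to $\mathrm{Po}(\lambda_k)$ \emph{and} $\mathrm{E}[X_{n,k}]\to\lambda_k$; for nonnegative variables, convergence in distribution together with convergence of the means does imply uniform integrability of $(X_{n,k})_{n}$, hence of the dominated family $0\le X_{n,k}-Y_{n,k}\le X_{n,k}$, and then convergence in probability to $0$ gives $\mathrm{E}[X_{n,k}-Y_{n,k}]\to 0$. Option (ii): avoid uniform integrability altogether by computing $\mathrm{E}[Y_{n,k}]$ directly: $Y_{n,k}$ is the number of unicyclic components whose cycle has $k$ edges, and a first-moment count of labelled unicycles (as in the sketch of \Cref{thm:sizefragment}, and as exploited in the proof of \Cref{lem:prob-fragment}, where the $\lambda_H$ sum to $f(c)$) shows that $\mathrm{E}[Y_{n,k}]$ converges to the same limit as $\mathrm{E}[X_{n,k}]$, whence $\mathrm{E}[X_{n,k}-Y_{n,k}]=\mathrm{E}[X_{n,k}]-\mathrm{E}[Y_{n,k}]\to0$. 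With either repair, your tail-splitting (or dominated convergence) over $k$ with envelope $\lambda_k$ is correct and completes the proof along essentially the paper's intended lines.
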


Let $\mathcal{U}$ be the family of unlabeled $d$-hypergraphs
whose connected components are unicyclic. 

\begin{lemma}\label{lem:hyp_unicycles_prob}
	Let $p\sim c/n^{d-1}$ with $c>0$. Let $\Tcal\subset \mathcal{U}$ 
	be a finite set of unicycles. For each $H\in \Tcal$ let $X_{n,H}$ be the
	random variable that counts the connected components in $G^d_n$ that
	are isomorphic  to $H$, and set \[
	\lambda_H= \frac{\left(c e^{-c/(d-2)!}\right)^{|H|}}{aut(H)}.
	\]	
	Then  $X_{n,H}$ converges in distribution
	to a Poisson variable with mean $\lambda_H$ as $n\to\infty$ and
	the $X_{n,H}$ are asymptotically independent, that is
	\[
	\Ln \pr\left(
	\bigwedge_{H\in \Tcal} X_{n,H}=a_H
	\right) = \prod_{H\in T} 
	e^{-\lambda_H}\frac{\lambda_H^{a_H}}{a_H!}.
	\]
\end{lemma}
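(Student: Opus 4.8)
The plan is to deduce this from Brun's sieve, exactly as in the graph case (Lemma~\ref{lem:unicycliccomponents}): by Lemma~\ref{th:factorialmethod} it suffices to show that for every choice of non-negative integers $(a_H)_{H\in\Tcal}$ one has
\[
\Ln \mathrm{E}\left[\prod_{H\in\Tcal}\binom{X_{n,H}}{a_H}\right] \;=\; \prod_{H\in\Tcal}\frac{\lambda_H^{a_H}}{a_H!}.
\]
The first observation I would record is that, since each $H\in\Tcal$ is a unicycle, $\exc(H)=0$ and hence $|V(H)|=(d-1)|H|$; write $v_H=|V(H)|$ and $h_H=|H|$. This identity is the only genuinely new input compared with the graph computation, and it is what converts the exclusion exponent below into the constant $c\,e^{-c/(d-2)!}$ appearing in $\lambda_H$.

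Next I would expand the factorial moment combinatorially. By linearity of expectation, $\mathrm{E}\big[\prod_H\binom{X_{n,H}}{a_H}\big]$ is the expected number of ways to choose, for each $H\in\Tcal$, an unordered family of $a_H$ pairwise vertex-disjoint connected components of $G^d_n$ isomorphic to $H$. I would count these by first selecting the underlying (disjoint) vertex sets, then a labeled copy of $H$ on each $v_H$-set (there are $v_H!/\aut(H)$ of them), then multiplying by the probability that all selected $d$-edges are present and that no further $d$-edge of $G^d_n$ is incident to a used vertex. Selecting the vertex sets contributes $\big(1+o(1)\big)\prod_H \tfrac{1}{a_H!}\big(n^{v_H}/v_H!\big)^{a_H}$, the $o(1)$ absorbing the disjointness constraint since the total number $\sum_H a_H v_H$ of used vertices is fixed. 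Each "edge present'' factor contributes $p^{h_H}=(c/n^{d-1})^{h_H}$ per copy. The exclusion factor is $(1-p)^{E_n}$, where $E_n$ is the number of potential $d$-edges meeting the union of the chosen vertex sets; that union has size $\sum_H a_H v_H$, so $E_n \sim \big(\sum_H a_H v_H\big)\binom{n}{d-1}\sim \big(\sum_H a_H v_H\big)\tfrac{n^{d-1}}{(d-1)!}$ and therefore $(1-p)^{E_n}\to \exp\!\big(-\tfrac{c}{(d-1)!}\sum_H a_H v_H\big)=\prod_H\big(e^{-c v_H/(d-1)!}\big)^{a_H}$. Assembling the three contributions and substituting $v_H=(d-1)h_H$, the $n^{v_H}$ and $v_H!$ factors cancel and one is left with $\prod_H\tfrac{1}{a_H!}\big(\tfrac{c^{h_H}}{\aut(H)}e^{-c h_H/(d-2)!}\big)^{a_H}=\prod_H\tfrac{\lambda_H^{a_H}}{a_H!}$, which is the hypothesis of Lemma~\ref{th:factorialmethod}; applying it gives both the Poisson limits and the asymptotic independence.

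The main obstacle is making the two approximations rigorous and uniform in the (finitely many) placements: namely that the contribution of configurations in which two chosen vertex sets overlap, or in which some external $d$-edge is present, is of lower order, and that the joint exclusion probability $(1-p)^{E_n}$ factorizes in the limit into the product of the individual exclusion probabilities — equivalently, that the number of $d$-edges meeting two distinct chosen vertex sets simultaneously is $O(n^{d-2})$ and hence contributes a factor $1+o(1)$. These are routine second-moment-type estimates, completely parallel to the graph argument behind Lemma~\ref{lem:unicycliccomponents} (a slight modification of \cite[Theorem~4.8]{bollobas2001random}), with $|V(H)|=(d-1)|H|$ the only bookkeeping change; for this reason I would present the proof in condensed form, in keeping with the convention of this section.
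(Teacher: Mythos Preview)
Your proposal is correct and follows exactly the approach the paper intends: the paper omits the proof of this lemma, noting that it is ``very similar to those for graphs presented in Section~\ref{sec:prelim}'', and the graph analogue (Lemma~\ref{lem:unicycliccomponents}) is itself proved by ``a straightforward application of Theorem~\ref{th:factorialmethod}\dots a slight modification of Theorem~4.8 in \cite{bollobas2001random}''. You carry out precisely this factorial-moment computation, with the one new ingredient---$|V(H)|=(d-1)|H|$ for unicycles---correctly identified and used to obtain the exponent $e^{-c/(d-2)!}$.
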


Define the fragment of hypergraph as the collection of components that are unicycles and let $\frag_n$ be the fragment of the random hypergraph $G_n^d$.   

\begin{lemma}\label{lem:hyp_finite_fragment}
	Let $p\sim c/n^{d-1}$ with $0<c<(d-2)!$. Then the limit 
	\[
	\Ln \mathrm{E}\big[ |\frag_n| \big]
	\]
	exists and is a finite quantity. 
\end{lemma}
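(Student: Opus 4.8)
The plan is to mirror the proof of Lemma~\ref{thm:sizefragment} for graphs, now working with hypergraphs and the excess parameter. First I would let $Y_{n,k}$ be the random variable counting the unicyclic components of $G^d_n$ that contain exactly $k$ edges, so that $|\frag_n| = \sum_{k\ge 2} Y_{n,k}$ (a unicyclic component has at least $2$ edges since a $d$-cycle has at least $2$ edges). The goal is then to show $\Ln \mathrm{E}[|\frag_n|] = \sum_{k\ge 2} \Ln \mathrm{E}[Y_{n,k}]$ and that the right-hand side is finite, which by the Dominated Convergence Theorem reduces to two things: (i) identifying the pointwise limits $\Ln \mathrm{E}[Y_{n,k}]$, and (ii) producing a summable dominating bound $\mathrm{E}[Y_{n,k}] \le g(k)$ valid for all $n$ and all $k$ large enough, with $\sum_k g(k) < \infty$.

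For step (i), a unicyclic component on $k$ edges has exactly $(d-1)k$ vertices (excess $0$ means $|V| = (d-1)|E|$). Counting labeled such components, choosing which vertices of $G^d_n$ they occupy, and requiring those edges present and all other incident edges absent, one gets
\[
\mathrm{E}[Y_{n,k}] = C_d(k)\,\binom{n}{(d-1)k}\, p^{k}\,(1-p)^{e(n,k)},
\]
where $C_d(k)$ is the number of labeled $d$-uniform unicyclic hypergraphs on $(d-1)k$ labeled vertices with $k$ edges and $e(n,k)$ is the number of potential $d$-edges meeting the component. As $n\to\infty$, $\binom{n}{(d-1)k} p^k \sim \frac{n^{(d-1)k}}{((d-1)k)!}\cdot \frac{c^k}{n^{(d-1)k}} = \frac{c^k}{((d-1)k)!}$, while $(1-p)^{e(n,k)} \to \exp(-\lim p\, e(n,k))$; since each of the $(d-1)k$ vertices of the component lies in asymptotically $\binom{n}{d-1}\sim n^{d-1}/(d-1)!$ edges and $p = c/n^{d-1}$, one finds $p\, e(n,k) \to \frac{c\,(d-1)k}{(d-1)!} = \frac{ck}{(d-2)!}$, so $(1-p)^{e(n,k)}\to \exp(-\tfrac{ck}{(d-2)!})$. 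Hence $\Ln \mathrm{E}[Y_{n,k}]$ equals a constant times $\bigl(c\,e^{-c/(d-2)!}\bigr)^k$, consistent with $\sum_H \lambda_H$ over $H\in\mathcal U$ of size $k$ as in Lemma~\ref{lem:hyp_unicycles_prob}.

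For step (ii) --- the main obstacle --- I need a uniform-in-$n$ bound $\mathrm{E}[Y_{n,k}] \le (c\,e^{-c/(d-2)!}\cdot(1+o(1)))^k$ that is summable because $c\,e^{-c/(d-2)!} < (d-2)!\,e^{-1} < 1$ when $c < (d-2)!$ (one checks $x e^{-x/(d-2)!}$ is increasing on $(0,(d-2)!)$ with value $(d-2)!/e$ at the endpoint). This requires (a) a clean upper bound on $C_d(k)$, the number of labeled unicyclic $d$-hypergraphs on $(d-1)k$ vertices: such a hypergraph is a $d$-cycle of some length $j\le k$ with rooted $d$-trees hanging off, and the number of labeled rooted $d$-trees on a given vertex set grows like $(\text{const})^k k!/((d-1)k)!$-type factors; this counting is exactly the kind of estimate carried out in \cite{karonski2002phase} and \cite{hypergraph-transition}, so I would cite it rather than redo it; and (b) controlling $(1-p)^{e(n,k)}$ from above uniformly, using $(1-p)^{e(n,k)} \le e^{-p(e(n,k))}$ and a lower bound $e(n,k) \ge (1-\varepsilon)\frac{(d-1)k\, n^{d-1}}{(d-1)!}$ for $n$ large, together with the crude bound $\binom{n}{(d-1)k} \le n^{(d-1)k}/((d-1)k)!$. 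Combining these exactly as in the graph case (Erdős--Rényi \cite{ER60} style, or \cite[Lemma~2.11]{frieze2016introduction}) yields the dominating function, and then the Dominated Convergence Theorem gives that $\Ln\mathrm{E}[|\frag_n|] = \sum_{k\ge 2}\Ln\mathrm{E}[Y_{n,k}]$ is finite. The delicate point is that the $o(1)$ terms in the per-edge factor must be absorbed uniformly over $k$ up to the relevant range; this is handled by first fixing a small $\varepsilon$, bounding for $n$ large enough (depending on $\varepsilon$ but not $k$), and noting the geometric series $\sum_k (c\,e^{-c/(d-2)!}+\varepsilon')^k$ still converges for $\varepsilon'$ small since $c\,e^{-c/(d-2)!} < 1$ strictly.
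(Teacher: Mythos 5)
Your overall strategy is exactly the one the paper intends: the paper omits the proof of this lemma and points to the graph case (Lemma \ref{thm:sizefragment}), whose sketch is precisely your plan --- introduce $Y_{n,k}$, identify $\Ln \mathrm{E}[Y_{n,k}]$, find a dominating bound summable in $k$ and uniform in $n$, and apply dominated convergence. One small slip: $|\frag_n|$ is the number of \emph{edges} of the fragment, so $|\frag_n|=\sum_{k\ge 2} k\,Y_{n,k}$, not $\sum_{k\ge 2} Y_{n,k}$; this is harmless, since an extra polynomial factor $k$ does not hurt a geometrically decaying dominating function.

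The genuine problem is in your step (ii), which is the heart of the lemma. Your claimed dominating bound $\mathrm{E}[Y_{n,k}]\le \bigl(c\,e^{-c/(d-2)!}(1+o(1))\bigr)^k$ silently drops the combinatorial factor: as your own step (i) shows, $\Ln\mathrm{E}[Y_{n,k}]=\bigl(c\,e^{-c/(d-2)!}\bigr)^k A_k$ with $A_k=C_d(k)/((d-1)k)!=\sum_{H\in \Ucal_k}1/\aut(H)$, and $A_k$ is \emph{not} $(1+o(1))^k$: already in the graph case it is asymptotically $\tfrac14 e^k/k$, and for $d$-uniform unicycles it is of exponential order $(e/(d-2)!)^k$ up to polynomial factors, so for $d=3,4$ your proposed $g(k)$ is not even an upper bound on the limit. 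Moreover, your stated reason for summability, $c\,e^{-c/(d-2)!}<(d-2)!\,e^{-1}<1$, is false for every $d\ge 5$, since then $(d-2)!/e>1$ (e.g.\ $d=5$, $c=5<3!$ gives $c\,e^{-c/6}\approx 2.17$). A sanity check that something must be off: if your bound held, the expected number of size-$k$ unicyclic components would remain exponentially small in $k$ even at the critical density $c=(d-2)!$, contradicting the critical behaviour of $G^d(n,c/n^{d-1})$. The hypothesis $c<(d-2)!$ has to enter through the \emph{combined} base: writing $x=c/(d-2)!<1$, the Karo\'nski--\L uczak-type counting of $C_d(k)$ (which you defer to the literature) yields, for $n$ and $k$ large,
\[
\mathrm{E}[Y_{n,k}]\;\le\; K\,\bigl(x\,e^{1-x}\bigr)^k ,
\]
with $K$ depending only on $c$ and $d$, and $x\,e^{1-x}<1$ precisely because $x<1$ --- the exact analogue of the bound $(c\,e^{1-c})^k e^{c/2}$ quoted in Lemma \ref{thm:sizefragment}. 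With this corrected dominating function (and your $\varepsilon$-device to absorb the $o(1)$ in $n$ uniformly over $k$), the pointwise limits consistent with Lemma \ref{lem:hyp_unicycles_prob} plus dominated convergence finish the proof as you describe.
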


\begin{lemma}\label{lem:hyp_zeroone_fragment}
	Let $p\sim c/n^{d-1}$ with $0<c<(d-2)!$. Let $\phi$ be a FO sentence and
	let $H\in \Ucal$.
	Then
	\[
	\Ln \pr \Big(
	G^d_n\models \phi \, \Big| \, \frag_n\simeq H 	
	\Big)= \text{ $0$ or $1$.}
	\]
	Moreover, the value of the limit depends only on $\phi$ and $H$, 
	and not on $c$. 
\end{lemma}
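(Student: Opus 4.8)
\textbf{Proof plan for Lemma \ref{lem:hyp_zeroone_fragment}.}

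The plan is to mirror exactly the argument that establishes Lemma \ref{lem:zeroonefragment} in the graph case, which rests on two ingredients: (i) a Gaifman-type locality statement for first order logic saying that whether $G_n^d \models \phi$ depends a.a.s.\ only on the isomorphism types of bounded-diameter neighbourhoods, where the bound depends only on $\qr(\phi)$; and (ii) the structural fact (Lemma \ref{lem:hyp_no_complex_component}) that for $0 < c < (d-2)!$ a.a.s.\ every connected component of $G_n^d$ is a tree or a unicycle. The Ehrenfeucht--Fra\"iss\'e / locality machinery for relational structures underlying the convergence law cited from \cite{saldanha2016spaces} and \cite{alberto} yields, just as Lynch's Theorems 4.7--4.9 do for graphs, that for $k = \qr(\phi)$ the truth value of $\phi$ on $G_n^d$ is a.a.s.\ determined by the multiset of (unlabeled) isomorphism types of the unicyclic components of bounded size together with the a.a.s.\ behaviour of the tree components, the latter being a pure zero--one phenomenon independent of $c$.

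First I would fix $\phi$, set $k = \qr(\phi)$, and invoke the locality result to reduce ``$G_n^d \models \phi$'' a.a.s.\ to a Boolean combination of statements about: the isomorphism types of the unicyclic components of excess $0$ and size at most some $N = N(k)$, and the (a.a.s.\ constant) local profile of the tree part. Then I would condition on $\frag_n \simeq H$. Since $H$ already pins down all unicyclic components exactly, the only remaining randomness in the relevant sub-profile comes from the tree components; but the distribution of bounded-size tree-component types in $G^d(n,c/n^{d-1})$ is asymptotically a vector of independent Poissons (the hypergraph analogue of Lemma \ref{lem:hyp_unicycles_prob} restricted to trees), and crucially the a.a.s.\ truth value extracted by the locality lemma from the tree part is a fixed Boolean value not depending on the realization of these Poissons — this is precisely why Lynch's reduction gives a zero--one law on the tree side. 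Hence $\Ln \pr(G_n^d \models \phi \mid \frag_n \simeq H)$ is either $0$ or $1$. For the ``depends only on $\phi$ and $H$'' clause, I would observe that the limiting tree-side truth value and the way it combines with the finite unicyclic profile $H$ are computed entirely from the EF-game analysis, whose parameters are the abstract isomorphism types only; the constant $c$ enters the unconditioned probabilities (the Poisson means) but not the conditional $0/1$ value once $H$ is fixed, exactly as in Lemma \ref{lem:zeroonefragment}.

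The main obstacle, and the only nonroutine point, is justifying the hypergraph locality statement precisely enough — namely, that a first order sentence over the $d$-ary symmetric anti-reflexive relation has its truth value on $G_n^d$ a.a.s.\ determined by bounded-radius neighbourhoods whose isomorphism types reduce, on the a.a.s.\ sparse structure, to finitely many unicyclic-component types plus a fixed tree profile. Since the paper states it is only sketching hypergraph preliminaries, I would simply cite \cite[Proposition 6.4]{saldanha2016spaces} and \cite{alberto} for the convergence law and note that their proofs proceed via exactly this locality reduction (the hypergraph Gaifman graph has bounded degree a.a.s., girth tending to infinity away from the finitely many small unicycles, so Hanf/Gaifman locality applies), yielding the claim verbatim from the argument in \cite{lynch}. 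Everything else is a transcription of Section \ref{sec:prelim} with $2x$ replaced by $x$ in cycle counts and $(d-2)!$ bookkeeping, and requires no new computation.
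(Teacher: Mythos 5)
Your proposal follows essentially the same route as the paper, which in fact omits this proof entirely, stating only that it is ``very similar'' to the graph case: there the argument is exactly your combination of the locality results underlying the convergence law (Lynch's Theorems 4.7--4.9 for graphs, \cite{saldanha2016spaces} and \cite{alberto} for hypergraphs) with the structural fact that a.a.s.\ all components are trees or unicycles (Lemma \ref{lem:hyp_no_complex_component}), followed by conditioning on the fragment, which is legitimate since $\pr(\frag_n\simeq H)$ tends to a positive constant. One small correction: the bounded-size \emph{tree}-component counts are not asymptotically Poisson (their expectations grow linearly in $n$; only the unicyclic counts, as in Lemma \ref{lem:hyp_unicycles_prob}, have Poisson limits), but this only strengthens your conclusion, since every fixed tree type appears a.a.s.\ in abundance exceeding any threshold relevant to the Ehrenfeucht--Fra\"iss\'e analysis, which is precisely why the tree-side profile, and hence the conditional truth value, is asymptotically deterministic and independent of $c$.
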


As for graphs, we divide the proof of Theorem \ref{th:hypergraph} into several cases. Along the way we analyze the distribution of the fragment and the number of automorphisms in hypergraphs. 

\subsection{No gap when $c \ge (d-2)!$}

The arguments here mirror exactly those in \Cref{sec:nogap_supercritical}.
For each $k$ let $X_{n,k}$ be the random variable equal to the number of  $k$-cycles in $G^d_n$.
Then
\[
X_{n,\leq k}= X_{n,2}+ \dots + X_{n,k}
\xrightarrow[n\to\infty]{\text{d}}
\hbox{Po}\left( \sum_{i=2}^k \frac{(c/(d-2)!)^k}{2k} \right).
\] 
If $c\geq (d-2)!$ then  $\sum_{i=2}^k \frac{(c/(d-2)!)^k}{2k}$ tends to infinity and 
we can use the Central Limit Theorem to approximate any  $p\in (0,1)$ 
with FO statements of the form ``$X_{n,\leq k} \leq a$''. 

\subsection{At least when gap for $c < c_0$}

Let $f(c)$ be  as in \Cref{eq:hyp_fc}, and let $c_0$ be the unique solution 
of $e^{-f(c)}=1/2$ lying in $[0,(d-2)!]$. Because of \Cref{lem:hyp_acyclic_prob}, $c_0$ 
is the only value for which
\[
\Ln \pr\big(G^d_n\big) = 1/2,
\]
where $p(n)\sim c/n^{d-1}$. One can check that this is achieved when
the expected degree $c/(d-2)!$ is $\simeq 0.898$, independently of $d$. \par
Let $p(n)\sim c/n^{d-2}$ with $0<c<c_0$. 
Because of \Cref{lem:hyp_zeroone_fragment}, for any FO sentence $\phi$
\[
\Ln \pr\big(
G^d_n\models \phi \, \Big| \text{ $G^d_n$ contains no cycles }
\big)= \text{ $0$ or $1$.}
\]
From this point we  continue as in \Cref{sec:c0} to show that 
$[1-e^{-f(c)}, e^{-f(c)}]$ is a gap of $\overline{L_c}$.

\subsection{Asymptotic distribution of the fragment for $c<(d-2)!$}

The same proof of \Cref{lem:prob-fragment} can be used to prove the following result.
\begin{theorem}\label{lem:hyp_prob_fragment}
	Let $p(n)\sim c/n^{d-1}$ with $0<c<(d-2)!$. Let $H\in \Ucal$.
	Then
	\[
	\Ln \pr \big(
	\frag_n \simeq H
	\big)	= e^{-f(c)} \frac{\left(e^{-c/(d-2)!}c\right)^{|H|}}{aut(H)}.
	\]
\end{theorem}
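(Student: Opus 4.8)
The plan is to transfer the proof of Lemma~\ref{lem:prob-fragment} line by line to the hypergraph setting, replacing the graph-theoretic inputs with their hypergraph counterparts already established in this section. Fix $H \in \Ucal$ and enumerate all unlabeled $d$-uniform unicycles $U_1, U_2, \dots$ by non-decreasing size (number of edges). Let $a_i$ be the number of connected components of $H$ isomorphic to $U_i$, and let $W_{n,i}$ count the components of $G_n^d$ isomorphic to $U_i$; then $\frag_n \simeq H$ iff $W_{n,i} = a_i$ for all $i$.

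First I would truncate the infinite conjunction: using \Cref{lem:hyp_finite_fragment} (finiteness of $\Ln \ex[|\frag_n|]$) together with Markov's inequality exactly as in the original proof, show that $\Ln \pr(\bigwedge_{i\ge 1} W_{n,i} = a_i) = \lim_{j\to\infty} \Ln \pr(\bigwedge_{i=1}^j W_{n,i} = a_i)$; the point is that for $k$ large the expected number of unicyclic components with $k$ edges is summably small, so large components almost surely do not appear. Next I would apply \Cref{lem:hyp_unicycles_prob} to each finite truncation, giving $\lim_{j\to\infty}\Ln\pr(\bigwedge_{i=1}^j W_{n,i} = a_i) = \prod_{i=1}^\infty e^{-\lambda_i}\frac{\lambda_i^{a_i}}{a_i!}$ with $\lambda_i = (c e^{-c/(d-2)!})^{|U_i|}/\aut(U_i) = \Ln \ex[W_{n,i}]$. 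Then, invoking \Cref{lem:hyp_no_complex_cycles} (asymptotically no cycles in complex components) plus dominated convergence as in \Cref{lem:hyp_finite_fragment}, conclude $\sum_{i\ge 1}\lambda_i = f(c)$ with $f$ as in \Cref{eq:hyp_fc}, hence $\prod_i e^{-\lambda_i} = e^{-f(c)}$.

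Finally I would collect the combinatorial identities $\sum_i |U_i| a_i = |H|$ and $\prod_i \aut(U_i)^{a_i} a_i! = \aut(H)$ — these hold verbatim for hypergraphs since automorphisms of a disjoint union factor through the components and the symmetric groups permuting isomorphic copies — to get $\prod_i \lambda_i^{a_i}/a_i! = (c e^{-c/(d-2)!})^{|H|}/\aut(H)$, and multiply through to obtain the claimed formula.

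The only point requiring any care is that the graph proof silently uses that a unicyclic component has at least $3$ edges (cycles of length $\ge 3$), whereas in the hypergraph world cycles can have as few as $2$ edges; but this affects only the indexing constants and not the structure of the argument, so I expect no genuine obstacle — the main ``work'' is simply checking that each cited graph lemma has been correctly restated as a hypergraph lemma above, which it has.
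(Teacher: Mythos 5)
Your proposal is correct and follows exactly the route the paper intends: the paper simply remarks that the proof of Lemma~\ref{lem:prob-fragment} carries over, and you have carried it out with the correct hypergraph substitutes (\Cref{lem:hyp_finite_fragment}, \Cref{lem:hyp_unicycles_prob}, \Cref{lem:hyp_no_complex_cycles}, and $f(c)$ from \Cref{eq:hyp_fc}), including the right parameter $\lambda_H=(ce^{-c/(d-2)!})^{|H|}/\aut(H)$. Your closing remark about cycles of length $2$ merely shifting indexing constants is also consistent with the paper's own comparison of Equations~\eqref{eq:c0} and~\eqref{eq:r0}.
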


For each $H\in \Ucal$  define 
$p_H(c)=p_H=\Ln \pr\Big(
\frag_n\simeq H\Big)
$.
Consider the set
\[
S_c=\left\{
\sum_{H \in \Tcal} p_H(c) \, : \, \Tcal\subseteq \Ucal
\right\}.
\]

One can proceed exactly as in \Cref{th:partialsums} to prove the following:
\begin{theorem}
Let $0<c<(d-2)!$. Then $\overline{L_c}=S_c$. 
\end{theorem}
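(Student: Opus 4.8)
The plan is to mirror the proof of \Cref{th:partialsums} essentially verbatim, replacing each graph lemma by its hypergraph counterpart from the preceding subsection. First I would establish the inclusion $\overline{L_c}\subseteq S_c$. As before, $S_c$ is a perfect set (being the set of subsums of a convergent series of nonnegative terms), hence closed, so it suffices to show $L_c\subseteq S_c$; that is, for every FO sentence $\phi$ there is a $\Tcal\subseteq\Ucal$ with $\Ln\pr(G_n^d\models\phi)=\sum_{H\in\Tcal}p_H(c)$. The key identity to prove is
\[
\Ln\pr\big(G_n^d\models\phi\big)=\sum_{H\in\Ucal}\Ln\pr\big(\frag_n\simeq H\big)\,\pr\big(G_n^d\models\phi\,\big|\,\frag_n\simeq H\big),
\]
and this follows by conditioning on the (disjoint) events $\frag_n\simeq H_i$ and truncating the tail: by \Cref{lem:hyp_finite_fragment} the expected fragment size is asymptotically some finite $m$, so Markov's inequality gives $\Ln\sum_{i\ge j}\pr(\frag_n\simeq H_i)\le m/M\le\epsilon$ once all $H_i$ with $i\ge j$ have more than $M=m/\epsilon$ edges. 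Then \Cref{lem:hyp_zeroone_fragment} says each conditional limit is $0$ or $1$, and taking $\Tcal=\{H_i:\text{the limit is }1\}$ yields the claimed representation.

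Next I would prove the reverse inclusion $S_c\subseteq\overline{L_c}$. Fix $\Tcal\subseteq\Ucal$ and $\epsilon>0$; the goal is a FO sentence $P$ with $|\Ln\pr(P(G_n^d))-\sum_{H\in\Tcal}p_H(c)|\le\epsilon$. By the hypergraph analogue of \Cref{lem:limitexchangefragment} (which follows from \Cref{lem:hyp_finite_fragment} and \Cref{lem:hyp_unicycles_prob} exactly as in the graph case) we have $\sum_{H\in\Tcal}p_H=\Ln\pr\big(\bigvee_{H\in\Tcal}\frag_n\simeq H\big)$. The property ``$\frag_n\simeq H$ for some $H\in\Tcal$'' is not itself first-order because $\Tcal$ may be infinite and the fragment unbounded, so as in the graph proof I would pass to the truncated fragment $\frag_n^M$ (the union of unicyclic components with at most $M$ edges) for $M=2m/\epsilon$, and to the finite set $\Tcal'=\Tcal\cap\Ucal_{\le M}$. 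The sentence $P(G):=\bigvee_{H\in\Tcal'}\frag^M(G)\simeq H$ is expressible in FO logic since $\Tcal'$ is finite and ``having a unicyclic component of size $\le M$ isomorphic to a fixed $H$'' is a bounded local statement. Comparing $P$ with the auxiliary properties $Q=\bigvee_{H\in\Tcal}\frag(G)\simeq H$ and $Q'=\bigvee_{H\in\Tcal'}\frag(G)\simeq H$, the symmetric differences are all contained in the event $|\frag_n|\ge M$, which by Markov has limiting probability $\le\epsilon/2$; a triangle inequality then gives the desired $\le\epsilon$ bound.

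The one genuine subtlety, and the step I expect to require the most care, is checking that the cited hypergraph preliminaries (Lemmas \ref{lem:hyp_no_complex_component}--\ref{lem:hyp_zeroone_fragment}) really do provide everything the graph argument used—in particular that \Cref{lem:hyp_zeroone_fragment} gives a $0/1$ law for FO sentences conditioned on the fragment with a limit independent of $c$, and that the truncated-fragment predicate ``$\frag^M(G)\simeq H$'' is first-order expressible in the language with a single symmetric anti-reflexive $d$-ary relation. The latter is routine: one writes a bounded-quantifier formula asserting the existence of a set of at most $d\cdot M$ vertices inducing a copy of $H$, that this copy is a union of components (no edge leaves it), and that it contains every unicyclic component of size $\le M$; quantifier rank stays bounded in terms of $M$ and $|H|$. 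Everything else—the perfectness of $S_c$, the truncation estimates, the triangle inequalities—transfers with only notational changes, so beyond flagging these points I would simply write ``the proof is identical to that of \Cref{th:partialsums}, using Lemmas \ref{lem:hyp_finite_fragment}, \ref{lem:hyp_zeroone_fragment}, \ref{lem:hyp_unicycles_prob} and \Cref{lem:hyp_prob_fragment} in place of their graph counterparts.''
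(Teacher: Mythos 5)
Your proposal is correct and matches the paper's intent exactly: the paper itself proves this statement by stating that one proceeds exactly as in Theorem \ref{th:partialsums}, substituting the hypergraph preliminaries (Lemmas \ref{lem:hyp_finite_fragment}, \ref{lem:hyp_zeroone_fragment}, \ref{lem:hyp_unicycles_prob} and Theorem \ref{lem:hyp_prob_fragment}) for their graph counterparts. Your additional checks, the FO expressibility of the truncated-fragment predicate in the $d$-ary language and the transfer of the Markov-truncation estimates, are exactly the routine verifications the paper leaves implicit.
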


\subsection{A lower bound on the number of automorphisms of unicyclic hypergraphs}

Let $H$ be an hypergraph and $h\in E(H)$ an edge. We call a vertex $v$ lying in 
$e$ \emph{free} if $e$ is the only edge that contains $v$. We denote by $\mathrm{free}(h)$ the
number of free vertices in $e$. Notice that
\[
\aut(H)\geq \prod_{h\in E(H)} \mathrm{free}(h)!,
\]
because free vertices inside an edge can be permuted without restriction. 
Given a unicycle $H$ we define the \emph{leaves} of $H$ as the edges $e\in E(H)$
that  contain only one non-free vertex. 


\begin{lemma}\label{lem:hyp_aut_bound}
	Let $H\in \Ucal$ be a $d$-hypergraph. Then,
	\[ \frac{(d-2)!^{|H|}}{\aut(H)} \leq \frac{(d-2)^2}{(d-1)^2}.\]
\end{lemma}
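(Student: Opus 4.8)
The plan is to exploit the structural decomposition of a unicycle $H$ into its cycle together with the rooted trees hanging off the cycle vertices, and to compare the crude lower bound $\aut(H)\ge\prod_{h\in E(H)}\free(h)!$ against the target quantity $(d-2)!^{|H|}$. Write $m=|H|=|E(H)|$. For each edge $h$ of $H$ there are $d$ vertices, and at least one of them is non-free (since $H$ is connected with more than one edge — the degenerate case $m\le 1$ must be handled separately and is trivial). The key combinatorial observation is that every edge of a unicycle shares either exactly one vertex (tree edges) or exactly two vertices (cycle edges) with the rest of the hypergraph; consequently $\free(h)=d-1$ for tree edges and $\free(h)=d-2$ for cycle edges, \emph{except} possibly for leaf edges, where an edge that is a leaf contributes $\free(h)=d-1$ regardless. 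So the only edges with $\free(h)<d-1$ are the internal cycle edges, and each of those has $\free(h)=d-2$.

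First I would set up the inequality: let $c$ be the length of the cycle in $H$ (so $c\ge 2$), and let $\ell$ be the number of cycle vertices that carry no attached tree edges — equivalently, the number of cycle edges whose two shared vertices are both on the cycle and which are therefore still ``leaf-like'' in the sense of having $d-2$ free vertices minus the corrections. Actually the cleanest route: directly bound
\[
\frac{(d-2)!^{m}}{\aut(H)}\ \le\ \frac{(d-2)!^{m}}{\prod_{h}\free(h)!}
\ =\ \prod_{h}\frac{(d-2)!}{\free(h)!}.
\]
Every factor with $\free(h)=d-1$ gives $(d-2)!/(d-1)!=1/(d-1)\le 1$, so those factors only help. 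Every factor with $\free(h)=d-2$ gives $(d-2)!/(d-2)!=1$. Hence the product over all edges is at most $1$ as soon as there is at least one tree edge or at least one leaf — and in fact we want the sharper bound $(d-2)^2/(d-1)^2$, so I need to locate two factors that are each at most $1/(d-1)$, i.e.\ two edges $h$ with $\free(h)=d-1$. The point is that a cycle of length $c\ge 2$ in a $d$-uniform hypergraph, being a minimal unicycle, has each of its edges sharing exactly two vertices in total with the others; a careful count shows at least two of its edges actually have $d-1$ free vertices when $c=2$ (the two edges of a 2-cycle share exactly two vertices, so each has $d-2$ free — hmm), so the two ``free'' factors must instead come from the attached trees: any unicycle has a spanning structure forcing at least two leaf edges unless $H$ is itself just a bare cycle. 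I would split into cases: (i) $H$ is a bare $k$-cycle, where $\aut(H)$ can be computed directly ($2k$ times a power of $(d-2)!$ from permuting free vertices) and the bound checked by hand; (ii) $H$ has at least one attached tree, in which case the tree contributes at least two leaf edges (a nontrivial tree has $\ge 2$ leaves; a single pendant edge is itself one leaf with $d-1$ free vertices, and then we find a second one), each giving a factor $1/(d-1)$.

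The main obstacle I anticipate is the bookkeeping in case (i), the bare cycle, and more generally making sure the count of edges with $\free(h)=d-1$ versus $\free(h)=d-2$ is exactly right at the ``boundary'' vertices — a cycle vertex that also anchors a tree is shared by three edges (two cycle edges and one tree edge), which reduces $\free$ for those edges and must be accounted for so as not to accidentally overcount the helpful $1/(d-1)$ factors. I would organize this by assigning to each non-free vertex a ``charge'' and showing the total number of $(d-2)$-type edges never exceeds the number of $(d-1)$-type edges by more than a controlled amount, so that the product $\prod_h (d-2)!/\free(h)!$ is dominated by $\big(1/(d-1)\big)^2=(d-2)!^2/(d-1)!^2$, which rearranges to the claimed $(d-2)^2/(d-1)^2$ after noting $(d-2)!/(d-1)!=1/(d-1)$ and so $(d-2)!^2/(d-1)!^2 = 1/(d-1)^2$; the extra factor $(d-2)^2$ in the statement presumably absorbs the contribution of one or two cycle edges where $\free(h)=d-2$ and the crude bound $\prod\free(h)!$ loses a factor of $(d-2)!$ relative to what the full $\aut(H)$ provides. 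Once the edge-type count is pinned down, the rest is a one-line estimate.
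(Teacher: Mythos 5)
Your overall strategy --- bound $\aut(H)$ from below by $\prod_{h}\free(h)!$ and estimate $\prod_{h}(d-2)!/\free(h)!$ --- is the same starting point as the paper, but the proposal has two genuine gaps. First, the claimed dichotomy ``$\free(h)=d-1$ for tree edges, $\free(h)=d-2$ for cycle edges'' is false: an interior edge of a pendant path has $d-2$ free vertices, a branching tree edge or a cycle edge anchoring attached trees can have $d-3$ or fewer, and every such edge contributes a factor $(d-2)!/\free(h)!>1$ that must be offset against the helpful $1/(d-1)$ factors. You acknowledge this needs ``bookkeeping'' via a charge argument but never supply it; the paper's substitute is an induction on adding one edge at a time, proving $\prod_{h\in E(H)}(d-2)!/\free(h)!\le\bigl(\tfrac{d-2}{d-1}\bigr)^{\lambda}$ where $\lambda$ is the number of leaf edges, by checking that each new leaf multiplies the product by at most $\tfrac{d-2}{d-1}$.

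Second, and more fatally, your plan to ``locate two edges with $\free(h)=d-1$'' cannot work in the case $\lambda=1$: if $H$ is a cycle with a single pendant path attached (the family $\mathcal{O}$ of the paper), there is exactly one leaf edge, and a direct count gives $\prod_{h}(d-2)!/\free(h)!=\tfrac{d-2}{d-1}$, which is strictly larger than the target $\bigl(\tfrac{d-2}{d-1}\bigr)^{2}$. So the crude bound $\aut(H)\ge\prod_h\free(h)!$ is simply not strong enough here, no matter how the edge types are counted; your parenthetical ``and then we find a second one'' is exactly the step that fails. The paper closes this case by exhibiting an automorphism \emph{not} coming from permuting free vertices, namely the reflection of the cycle fixing the attachment vertex, which gives $\aut(H)\ge 2\prod_h\free(h)!$ and hence $\tfrac12\cdot\tfrac{d-2}{d-1}\le\bigl(\tfrac{d-2}{d-1}\bigr)^{2}$ (valid since $\tfrac{d-2}{d-1}\ge\tfrac12$ for $d\ge3$). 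Your bare-cycle case (i) is fine and matches the paper's $\lambda=0$ case, and the reduction to connected components is routine, but without the leaf-counting inequality and the extra reflection automorphism in the one-leaf case the argument does not go through.
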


\begin{proof}
	
	It suffices to prove the statement for unicycles, because
	\[ \frac{(d-2)!^{|H|}}{\aut(H)}\leq \prod_{i} \frac{(d-2)!^{|H_i|}}{\aut(H_i)}, \]
	where the $H_i$ are the connected components of $H$.\par
	Let $\lambda$ be the number of 
	leaves in $H$. We show by induction that
	\begin{equation}\label{ineqleaves}
	\prod_{h\in E(H)} \frac{(d-2)!}{\mathrm{free}(h)!} \leq 
	\left(\frac{d-2}{d-1}\right)^\lambda.
	\end{equation}
	If $\lambda=0$ then $H$ is a cycle and each  of
	its edges contains exactly $d-2$ free vertices, so that 
	\[	\prod_{h\in E(H)} \frac{(d-2)!}{\mathrm{free}(h)!}=1
	,\]
and $H$ satisfies \eqref{ineqleaves}.
	Now let $H$ be a unicycle satisfying 
	\eqref{ineqleaves}. Add a new edge $h^\prime$ to $H$ 
	to obtain another unicycle $H^\prime$. Since 
	$h^\prime$ intersects $H$ in only one vertex $v$, it follows that $h^\prime$ is a leaf of $H^\prime$. There are
	two possibilities:
	\begin{itemize}[leftmargin=*]
	\item $\lambda(H^\prime)=\lambda(H)$. In this
	case no new leaves are created with the addition of
	$h^\prime$. This means that $v$ is a free vertex in one
	leaf $g$ of $H$ (that is, $h^\prime$ ``grows" out of $g$), 
	and
	\[	\prod_{h\in E(H^\prime)} \frac{(d-2)!}{\mathrm{free}(h)!}=
	\prod_{h\in E(H)} \frac{(d-2)!}{\mathrm{free}(h)!}.\]
	\item $\lambda(H^\prime)=\lambda(H)+1$. In this case
	$h^\prime$ intersects an edge of $H$ that is not a leaf.
	The case that maximizes $\prod_{h\in E(H^\prime)} 
	\frac{(d-2)!}{\mathrm{free}(h)!}$ is   when $h^\prime$ grows out  
	of a free vertex of an edge in $H$ with exactly
	$d-2$ free vertices. In this case
	\[	\prod_{h\in E(H^\prime)} \frac{(d-2)!}{\mathrm{free}(h)!}=
	\frac{d-2}{d-1} \prod_{h\in E(H)} 
	\frac{(d-2)!}{\mathrm{free}(h)!},\]
	and $H^\prime$ satisfies \eqref{ineqleaves} as well.
	\end{itemize}
	Finally, as all unicycles can be obtained adding edges
	to a cycle successively, \eqref{ineqleaves} holds for all
	unicycles. \par
	
	To prove the original statement consider the cases 
	$\lambda=0$, $\lambda=1$ and $\lambda\geq 2$.
	\begin{itemize}[leftmargin=*]
		\item
		If 
		$\lambda=0$ then $H$ is a cycle of length $l\geq 2$ 
		and $\aut(H)=(d-2)!^l 2l$, yielding
		\[ \frac{(d-2)!^{|H|}}{\aut(H)}=\frac{1}{2l} 
		\leq \frac{(d-2)^2}{(d-1)^2},\]
		since $1/2l\leq 1/4 \leq (d-2)^2/(d-1)^2$ for all 
		$l\geq 2, d\geq 3$.
		\item
		If $\lambda=1$ then $H$ is a cycle with a path attached
		to it. In this case, $H$ has a non-trivial  automorphism (a reflection of the cycle) 
 and as a consequence 
		$2\prod_{h\in E(H)}\mathrm{free}(h)! \leq \aut(H)$. Using this
		and \eqref{ineqleaves} we get
		\[ \frac{(d-2)!^{|H|}}{\aut(H)}\leq \frac{1}{2}
		\prod_{h\in E(H)} \frac{(d-2)!}{\mathrm{free}(h)!}\leq
		\frac{1}{2} \left( \frac{d-2}{d-1}\right)\leq
		\left( \frac{d-2}{d-1}\right)^2,	
		\]
		as we wanted.
		\item
		Finally, when $\lambda \geq 2$ the relation  \eqref{ineqleaves}
		suffices, since
		\[
		\prod_{h\in E(H)} \frac{(d-2)!}{\mathrm{free}(h)!}\leq
		\left( \frac{d-2}{d-1}\right)^\lambda \leq
		\left( \frac{d-2}{d-1}\right)^2.	
		\]
		
	\end{itemize}
	
	\par

\end{proof} 
\subsection{Some families of unicycles}

In this section we introduce three families of hypergraphs having a small number of automorphisms, and that will be used in the subsequent proofs.  \par
\begin{itemize}[leftmargin=*]
	\item
	Let $T_{\alpha,\beta}$ denote the hypergraph consisting of
	a triangle (as a $d$-hypergraph) with two paths of length $\alpha$ and $\beta$ 
	respectively attached to two of its free vertices, 
	each one from a different edge. One can check that
	\[
	\frac{(d-2)!^{|T_{\alpha,\beta}|}}{\aut(T_{\alpha,\beta})}=
	\frac{(d-2)!^{\alpha+\beta+3}}{\aut(T_{\alpha,\beta})}=\begin{cases}
	\left(\frac{d-2}{d-1}\right)^2 \text{ for } \alpha\neq\beta \text{,   }\\
	\\
	\frac{1}{2}\left(\frac{d-2}{d-1}\right)^2\text{ otherwise.} 
	\end{cases}
	\]
	Let $\mathcal{T}$ be the family of hypergraphs $\{T_{\alpha,\beta} \colon\alpha, \beta>0\}$.
	Then for  $k\geq 4$
	\begin{equation} \label{eqn:triangles}
	\sum_{H\in \mathcal{T}, \,|H|=k} \frac{(d-2)!^{|H|}}{\aut(H)}=
	\sum_{\alpha=1}^{\lfloor{\frac{k-3}{2}}\rfloor}
	\frac{(d-2)!^{k}}{\aut(T_{\alpha,k-3-\alpha})}= 
	\frac{k-4}{2} \left( \frac{d-2}{d-1} \right)^2.
	\end{equation}
	\item
	Let $B_{\alpha,\beta}$ denote the hypergraph
	consisting of a two-cycle with two paths 
	of length $\alpha$ and $\beta$ 
	respectively attached to two of its free vertices, 
	each one from a different edge. In this case
	\[
	\frac{(d-2)!^{|B_{\alpha,\beta}|}}{\aut(B_{\alpha,\beta})}=
	\frac{(d-2)!^{\alpha+\beta+2}}{\aut(B_{\alpha,\beta})}=
	\begin{cases}
	\frac{1}{2}\left(\frac{d-2}{d-1}\right)^2 
	\text{ for } \alpha\neq\beta \text{,}\\
	\\
	\frac{1}{4}\left(\frac{d-2}{d-1}\right)^2\text{ otherwise.} 
	\end{cases}
	\]	
	Let $\mathcal{B}= \{ B_{\alpha,\beta} \colon \alpha, \beta>0\}$.
	Then for  $k\geq 3$ 
	\begin{equation}  \label{eqn:twocycles}
	\sum_{H\in \mathcal{B}, |H|=k} \frac{(d-2)!^{|H|}}{aut(H)}=
	\sum_{\alpha=1}^{\lfloor{\frac{k-2}{2}}\rfloor}
	\frac{(d-2)!^{k}}{\aut(B_{\alpha,k-2-\alpha})}= 
	\frac{k-3}{4}\left( \frac{d-2}{d-1} \right)^2.
	\end{equation}
	\item
	We denote by $O_{\alpha,\beta}$,
	the hypergraph formed by attaching a path of length
	$\beta$ to a free vertex of a cycle of
	length $\alpha$. One can check that 
	$|O_{\alpha,\beta}|=\alpha+\beta$ and
	that $\frac{(d-2)!^{\alpha+\beta}}
	{\aut(O_{\alpha,\beta})|}=
	\frac{1}{2}\left(\frac{d-2}{d-1}\right)$.
	
	Let  $\mathcal{O}= \{O_{\alpha,\beta}\colon \alpha>1, \beta>0\}$.
	Then for  $k\geq 2$
	\begin{equation}  \label{eqn:cycles}
	\sum_{H\in \mathcal{O}, |H|=k} \frac{(d-2)!^{|H|}}{\aut(H)}=
	\sum_{\alpha=2}^{k-1}
	\frac{(d-2)!^{k}}{\aut(O_{\alpha,k-\alpha})}= 
	\frac{k-2}{2}\left( \frac{d-2}{d-1} \right).
	\end{equation}
\end{itemize}

\subsection{$\overline{L_c}$ is always a finite union of intervals}
Fix $0<c<(d-2)!$. Let $H_1,\dots, H_n,\dots$ be an enumeration of $\Ucal$
such that $p_{H_i}\leq p_{H_j}$ for all $i\leq j$. As before we shorten
$p_{H_i}$ to $p_i$. Analogously to \ref{sec:finitegaps} we need
to prove that for  $i$ large enough 
\[
p_i \leq \sum_{j>i} p_j.
\] 
Let $f=f(c)$ be as defined in \Cref{eq:hyp_fc}, and let 
$s=\frac{c}{(d-2)!} e^{-c/(d-2)!}$. Because of \Cref{lem:hyp_prob_fragment}
we have that
\begin{equation} \label{eq:hyp_prob_fragment}
p_i= e^{-f}  s^{|H_i|}  \frac{(d-2)!^{|H_i|}}{aut(H_i)}.
\end{equation}
For $i>0$ we define $k(i)$ as the unique integer such that
\[
e^{-f}  s^{k(i)-1} \left(\frac{d-2}{d-1}
\right)^2 
\geq p_i  > 
e^{-f}  s^{k(i)} \left(\frac{d-2}{d-1}
\right)^2 
\]
Notice that because of  Lemma, \eqref{lem:hyp_aut_bound}, 
we have $|H_i|\leq k(i)-1$. 

As a  consequence, if $k=k(i)\geq 4$ then  
\begin{align}
\nonumber
\sum_{j>i} p_i &\geq 
s^k \sum_{H\in U_k}\frac{(d-2)!^k}{\aut(H)} 
\\&
\nonumber
\geq s^k\frac{k-4}{2} \left( \frac{d-2}{d-1} \right)^2.
\end{align}
This is obtained taking into account only the hypergraphs in $\mathcal{T}$ and 
using \Cref{eqn:triangles}.
The last inequality implies that if $k(i)$ is such that 
$\frac{1}{s} \leq \frac{k(i)-4}{2}$ then $p_i\leq \sum_{j>i} p_j$. This
clearly holds for $i$ large enough, hence $\overline{L_c}$ is a finite
union of intervals, as needed to be proved.

\subsection{No gap when  $c_0 \leq c < (d-2)!$}

Fix $c_0 \leq c < (d-2)!$. Let $H_1,\dots, H_n, \dots$ 
be an enumeration of $\Ucal$ satisfying the same conditions as before. 
We want to show that for all $i$ 
\begin{equation} \label{eqn:kakeya}
p_i \leq \sum_{j>i} p_j.
\end{equation} 
\par
Notice that  $s=\frac{c}{(d-2)!} e^{-c/(d-2)!}$
satisfies that
\[
\frac{1}{3} < s < \frac{1}{e},
\]
because $0.898\leq c/(d-2)! < 1$.
The following inequalities are obtained using
\Cref{eqn:triangles,eqn:twocycles,eqn:cycles} respectively, together with the formula
for the sum of an arithmetic-geometric series and the 
fact that $1/3 < s$.

\begin{equation} \label{eq:hyp_total_triangles}
\sum_{H \in \mathcal{T}, |H|\geq k} p_H \geq e^{-f} s^k 
\frac{6k-21}{8}  
\left( \frac{d-1}{d-2}\right)^2 \quad \text{ for  $k\geq 4$.}
\end{equation}

\begin{equation} \label{eq:hyp_total_twocycles}
\sum_{H \in \mathcal{B}, |H|\geq k} p_H \geq e^{-f} s^k 
\frac{6k-15}{16}  
\left( \frac{d-1}{d-2}\right)^2\quad \text{ for  $k\geq 3$.}
\end{equation}

\begin{equation} \label{eq:hyp_total_cycles}
\sum_{H \in \mathcal{O}, |H|\geq k} p_H \geq e^{-f} s^k 
\frac{6k-9}{8}  
\left( \frac{d-1}{d-2}\right)\quad \text{ for  $k\geq 2$.}
\end{equation}

Assume first that $k=k(i)\geq 5$. Then 
\begin{align*}
\sum_{j>i} p_j &\geq
e^{-f} s^k \left[ \frac{18k-57}{16}
\left(\frac{d-2}{d-1}\right)^2
+
\frac{6k-9}{8} \left(\frac{d-2}{d-1}\right)
\right]\\
&\geq
e^{-f} s^k  \frac{30k-75}{16}
\left(\frac{d-2}{d-1}\right)^2
\geq e^{-f} s^k  3  \left(\frac{d-2}{d-1}\right)^2\\
&
\geq e^{-f} s^{k-1}  \left(\frac{d-2}{d-1}\right)^2 \geq p_i,
\end{align*}
as  was to be proven. \par
Otherwise, suppose that $k=k(i)\leq 4$.
Notice that because of \Cref{lem:hyp_aut_bound} necessarily $|H_i|\leq 3$.
We have three cases:
\begin{itemize}[leftmargin=*]
\item $|H_i|=3$. In this case, the following enumeration of all (unlabeled)
unicycles of size $3$ gives that 
\[
e^{-f} s^3  \frac{1}{2}  \left(\frac{d-1}{d-2}\right) 
\geq p_i.
\]
\begin{figure}[H]
	\begin{subfigure}{.3\linewidth}
		\centering
		\includegraphics[width=0.7\linewidth]{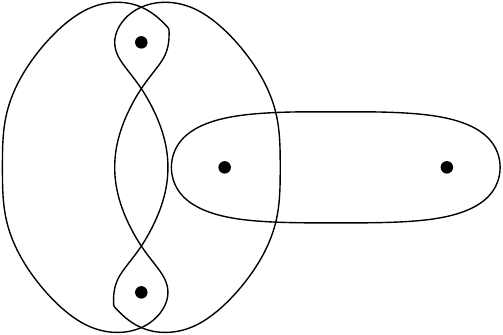}
		\caption{$\frac{(d-2)!^3}{\aut(H)}=\frac{1}{2}
			\left(\frac{d-1}{d-2}\right)$.}
	\end{subfigure}
	\hfill
	\begin{subfigure}{.3\linewidth}
		\centering
		\includegraphics[width=0.8\linewidth]{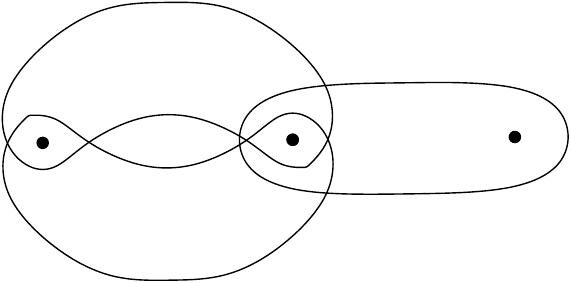}
		\caption{$\frac{(d-2)!^3}{\aut(H)}=\frac{1}{2}
			\left(\frac{1}{d-2}\right)$.}
	\end{subfigure}
	\hfill
	\begin{subfigure}{.3\linewidth}
		\centering
		\includegraphics[width=0.6\linewidth]{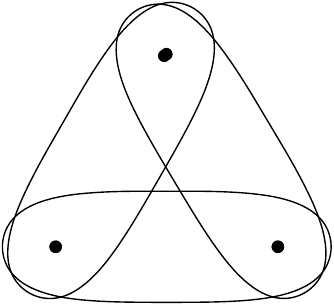}
		\caption{$\frac{(d-2)!^3}{\aut(H)}=\frac{1}{6}
			$.}
	\end{subfigure}
\end{figure}
Proceeding as before we obtain
\begin{align*}
\sum_{j>i} p_j &\geq
e^{-f} s^4 \left[ \frac{18 4-57}{16}
\left(\frac{d-2}{d-1}\right)^2
+
\frac{6 4-9}{8} \left(\frac{d-2}{d-1}\right)
\right]\\
&\geq
e^{-f} s^4 \left[ \frac{15}{16} \frac{1}{2} 
\left(\frac{d-2}{d-1}\right)
+
\frac{30}{8}  \frac{1}{2} \left(\frac{d-2}{d-1}\right)
\right]\\
&
\geq e^{-f} s^{4}  \frac{3}{2} \left(\frac{d-2}{d-1}\right) \geq 
e^{-f} s^{3}  \frac{1}{2} \left(\frac{d-2}{d-1}\right) \geq
p_i.
\end{align*}

\item $|H_i|=2$. In this case $H_i$ is the $2$-cycle, and 
$p_i=e^{-f} s^2  \frac{1}{4}$. Using \Cref{eq:hyp_total_twocycles,eq:hyp_total_cycles} we obtain
\begin{align*}
\sum_{j > i} p_j &\geq p_{C_3} + 
\sum_{H\in \mathcal{B}} p_H + 
\sum_{H\in \mathcal{O}} p_H\\ &\geq
e^{-f} s^3  \left[
\frac{1}{6} + 
\frac{3}{16}\left(\frac{d-2}{d-1}\right)^2
+ \frac{9}{8} \left(\frac{d-2}{d-1}\right)
\right]\\
&\geq
e^{-f} s^3  \left[
\frac{4}{6}\frac{1}{4} + 
\frac{3}{16}\frac{1}{4}
+ \frac{18}{8}  \frac{1}{4}\right]
\geq e^{-f} s^3  3  \frac{1}{4}
\geq p_i.
\end{align*}
 
\item $|H_i|=0$. In this case $H_i$ is the empty graph
and $p_i\geq 1/2$ by hypothesis. 

\end{itemize}

\section{Concluding remarks}

It can be shown that in Theorem \ref{th:main} the number of intervals in which $ \overline{L}_c$ decomposes when $c<c_0$ is unbounded as $c \to 0$. It would be interesting to determine at which rate the number of intervals grows as $c \to 0$. It is a delicate issue, since the decreasing ordering of the possible fragments according to their probabilities changes with $c$ in a  complicated way. 


\bibliographystyle{abbrvnat}
\bibliography{biblio}

%
%
%
%
%
%
%
%

\end{document}